\documentclass{amsart}

\usepackage{graphicx} 
\usepackage{longtable}
\usepackage{array}

\usepackage{amssymb} 
\usepackage{amsrefs}

\usepackage[all]{xy}
\newdir{ >}{{}*!/-5pt/@{>}} 
\SelectTips{cm}{} 

\usepackage{tikz}
\usetikzlibrary{arrows}

\DeclareMathOperator{\Ext}{Ext}
\DeclareMathOperator{\Hom}{Hom}

\newcommand{\F}{\mathbb{F}}
\newcommand{\RP}{\mathbb{R}P}
\newcommand{\cA}{\mathcal{A}}
\newcommand{\cE}{\mathcal{E}}
\newcommand{\cW}{\mathcal{W}}

\newcommand{\lra}{\longrightarrow}
\newcommand{\lla}{\longleftarrow}

\newcommand{\llla}[1]{\stackrel{#1}{\longleftarrow}}
\newcommand{\<}{\langle}
\renewcommand{\>}{\rangle}

\newtheorem{theorem}{Theorem}
\newtheorem{proposition}[theorem]{Proposition}
\newtheorem{lemma}[theorem]{Lemma}
\newtheorem{corollary}[theorem]{Corollary}
\theoremstyle{definition}

\theoremstyle{remark}
\newtheorem{remark}[theorem]{Remark}

\author{Robert R. Bruner}
\address{Department of Mathematics\\
         Wayne State University\\
         Detroit, Michigan  48202\\
         USA
         }
\email{robert.bruner@wayne.edu}

\author{Christian Nassau}
\address{Jenaer Weg 31\\
65931 Frankfurt\\
Germany}
\email{nassau@nullhomotopie.de}

\author{Sean Tilson}
\address{Mathematik und Informatik\\
Bergische Universit\"at Wuppertal\\
42119 Wuppertal\\
Germany}
\email{Sean.Tilson@gmail.com}

\thanks{Thanks to the Simons Foundation, and to the
Isaac Newton Institute for
Mathematical Sciences for support and hospitality during the program
{\em Homotopy  Harnessing Higher Structures} during which some of this
work was done.
This program was supported by EPSRC grant  number
EP/R014604/1.}

\setcounter{tocdepth}{1}

\title{Steenrod operations and $\cA$-module extensions}

\begin{document}
\maketitle
\tableofcontents

\section{Introduction}

Explicit extensions
\[
\cE_x : 0 \lla \F_2 \lla M_0 \lla \cdots \lla M_{s-1} \lla \Sigma^t \F_2 \lla 0
\]
representing cocycles  $x \in \Ext_{\cA}^{s,t}(\F_2,\F_2)$ can be used to 
calculate Steenrod operations $Sq^i : \Ext_{\cA}^{s,t}(\F_2,\F_2) \lra
\Ext_{\cA}^{s+i,2t}(\F_2,\F_2)$ by a method devised by Christian Nassau
and described in Section~\ref{sect:efficient}.   
To be effective, the modules $M_i$
need to be small.   A practical method for finding small extensions is
described in Section~\ref{sect:practical}, 
and used in Sections~\ref{sect:e0} and \ref{sect:d0}.
These extensions can be used to identify explicit cocycles representing
the values of Steenrod operations in the
minimal resolutions produced by the first author's computer programs.
This information is useful in determining
differentials in the Adams spectral sequence.

The extension for $f_0$ was found by the third author as a part of
an undergraduate research project at Wayne State many years ago.   Students of
Agn\`es Beaudry at the University of Colorado have produced an extension
for $d_0$ as part of a Research Experiences for
Undergraduates project in the summer of 2019. We give an alternate extension
realizing $d_0$ in Section~\ref{sect:d0}.   In response to the first
version of this paper, Dexter Chua has done extensive computer calculations
of extensions.    It seems that an algorithm to produce minimal 
extensions may not be far off.

Andy Baker has also used these extensions to investigate the realizability of
$\cA$-modules.

\section{Extensions for the subalgebra generated by the $h_i$}

The cocycle $h_i \in \Ext_{\cA}^{1,2^i}(\F_2,\F_2)$ is represented by an
extension
\[
0 \lla  \F_2 \lla M \lla \Sigma^{2^i} \F_2 \lla 0
\]
which we represent diagrammatically as follows:

\begin{figure}[h]
\begin{tikzpicture}

\draw [] (0,0) circle [radius=0.07];
\draw [] (4,0) circle [radius=0.07];
\draw [] (4,2) circle [radius=0.07];
\draw [] (8,2) circle [radius=0.07];

\draw [<-] (0.1,0) -- (3.9,0);
\draw [<-] (4.1,2) -- (7.9,2);
\draw []   (4,0.1) .. controls (3.7,0.7) and (3.7,1.3) .. (4,1.9);

\node [left] at  (3.8, 1) {$Sq^{2^i}$};

\end{tikzpicture}
\end{figure}

Extensions representing elements in the subalgebra generated
by the $h_i$ can then be represented simply by splicing. However, it can
be difficult to recognize when two such are equivalent.   For example,
the extension in Figure~\ref{fig:h0h1} representing $h_0 h_1$
\begin{figure}[h]
\begin{tikzpicture}

\foreach \s in {0,1,2,3}
{
\node [left] at (-0.4,\s) {$\s$};
}

\draw [] (0,0) circle [radius=0.07];
\draw [] (3,0) circle [radius=0.07];
  \node [below right] at (3,0) {\small{$x_0$}};
\draw [] (3,1) circle [radius=0.07];
\draw [] (6,1) circle [radius=0.07];
  \node [below right] at (6,1) {\small{$x_1$}};
  \node [below right] at (3,0) {\small{$x_0$}};
\draw [] (6,3) circle [radius=0.07];
\draw [] (9,3) circle [radius=0.07];
  \node [below right] at (9,3) {\small{$x_2$}};

\draw [<-] (0.1,0) -- (2.9,0);
\draw [<-] (3.1,1) -- (5.9,1);
\draw [<-] (6.1,3) -- (8.9,3);

\draw      (3,0.1) -- (3,0.9);
\draw []   (6,1.1) .. controls (5.7,1.7) and (5.7,2.3) .. (6,2.9);

\node at (0,-1) {$\F_2$};
\node at (3,-1) {$M_0$};
\node at (6,-1) {$M_1$};
\node at (9,-1) {$\Sigma^3 \F_2$};

\draw [<-] (0.3,-1) -- (2.7,-1);
\draw [<-] (3.3,-1) -- (5.7,-1);
\draw [<-] (6.3,-1) -- (8.6,-1);

\end{tikzpicture}
\caption{The splice of $\cE_{h_0}$ and $\cE_{h_1}$}
\label{fig:h0h1}
\end{figure}
is equivalent to a split extension, but the easiest way to check
this is to compute the cocycle $c_2 : C_2 \lra \Sigma^3 \F_2$ which appears
in a chain map from a resolution $C_* \lra \F_2$ to the extension.
\begin{figure}[h]
\begin{tikzpicture}

\node at (0,1) {$\F_2$};
\node at (3,1) {$C_0$};
\node at (6,1) {$C_1$};
\node at (9,1) {$C_2$};

\draw [<-] (0.3,1) -- (2.7,1);
\draw [<-] (3.3,1) -- (5.7,1);
\draw [<-] (6.3,1) -- (8.6,1);

\draw [] (-0.05,.8) -- (-0.05,-.7);
\draw [] (0.02,.8) -- (0.02,-.7);
\draw [->] (3,.8) -- (3,-.7);
\draw [->] (6,.8) -- (6,-.7);
\draw [->] (9,.8) -- (9,-.7);

\node at (0,-1) {$\F_2$};
\node at (3,-1) {$M_0$};
\node at (6,-1) {$M_1$};
\node at (9,-1) {$\Sigma^3 \F_2$};

\node at (3.3,0) {$c_0$};
\node at (6.3,0) {$c_1$};
\node at (9.3,0) {$c_2$};

\draw [<-] (0.3,-1) -- (2.7,-1);
\draw [<-] (3.3,-1) -- (5.7,-1);
\draw [<-] (6.3,-1) -- (8.6,-1);

\end{tikzpicture}
\caption{The chain  map to the splice of $\cE_{h_0}$ and $\cE_{h_1}$}
\label{fig:h0h1ch}
\end{figure}
In the notation of Section~\ref{sec:minimal}, such a chain map 
has the following nonzero values:
\begin{itemize}
\item[0:] $c_0(0_0) = x_0$
\item[1:] $c_1(1_0) = x_1$
\end{itemize}
Since the term $Sq^{2}(1_0)$ does not appear in $d(2_g)$ for any $g$,
the cocycle $c_2 $ is zero, and the extension is equivalent to one which is
split.

\section{Steenrod operations in $\Ext$}

Since $\cA$ is a cocommutative Hopf algebra, we have a symmetric monoidal product
\[
M,N \mapsto M \otimes_{\F_2} N
\]
on the category of $\cA$-modules by
pulling back the natural $\cA \otimes \cA$-module structure on $M \otimes N$
along the coproduct $\cA \lra \cA \otimes \cA$.   The comparison theorem then gives
an $\cA$-linear diagonal map $\Delta : C_* \lra C_* \otimes C_*$ which induces
a product 
\[
\Hom_{\cA}(C_*,\F_2) \otimes \Hom_{\cA}(C_*,\F_2) \lra \Hom_{\cA}(C_*,\F_2).
\]
This must, by general nonsense, agree with the Yoneda product in 
$\Ext_{\cA}(\F_2,\F_2)$.   
This implies that this product is independent of the coproduct of $\cA$.  

The cocommutativity of $\cA$ implies that $\tau \Delta \simeq \Delta$,
where $\tau : C_* \otimes C_*  \lra C_*  \otimes C_* $ is the transposition,
showing that this product is commutative.   A chain homotopy $\Delta_1 : \tau \Delta \simeq
\Delta$ gives a {\em `cup-1' product} $x \cup_1 y := (x \otimes y) \Delta_1$.    We can
iterate this construction, getting maps
\[
\Delta_i : C_\sigma \lra (C \otimes C)_{\sigma+i},
\]
with $\Delta_0 = \Delta$, satisfying
\begin{equation}
(d\otimes 1 + 1 \otimes d) \Delta_i + \Delta_i d = \Delta_{i-1} - \tau \Delta_{i-1}
\label{eqn:chmap}
\end{equation}
for each $i>0$.    Using these we define Steenrod operations\\[-2ex]
\begin{figure}[h]
\begin{tikzpicture}
\node at (0,0) {$Sq^i(x) : C_{s+i}$};
\draw [->] (1.3,0) -- (3,0);
\node at (4,0) {$(C\otimes C)_{2s}$};
\draw [->] (4.9,0) -- (6.2,0);
\node at (8,0) {$\Sigma^t \F_2 \otimes \Sigma^t \F_2 \cong \Sigma^{2t} \F_2$};
\node at (2.2,0.2) {$\Delta_{s-i}$};
\node at (5.5,0.2) {$x \otimes x$};
\end{tikzpicture}
\end{figure}\\
for $0 \leq i \leq s$.  See~\cite{V168} and \cite{Hinf}*{Ch.IV, Sec.2} for details.

In practice, a major obstacle to computing these is the size of the
modules $(C \otimes C)_{2s}$.  They are far too large for hand calculation and
are even too large for practical machine calculations.   Conceptually, the maps
$\Delta_i$ carry within them all possible decompositions, but we are interested 
in only that small part detected by the cocycle $x \otimes x$.
This is like writing out the entire multiplication table, where all we really need
is to know how to multiply by one indecomposable.

\section{An efficient method of computing Steenrod operations}
\label{sect:efficient}

The second author proposed a method for making these calculations practical in
an email to the first author \cite{Nas97}.  
He observed that if $\cE_x$ is an extension corresponding to $x$,

\begin{figure}[h]
\begin{tikzpicture}[scale=0.8]

\node at (0,2) {$0$};
\node at (2,2) {$\F_2$};
\node at (4,2) {$C_0$};
\node at (6,2) {$C_1$};
\node at (8,2) {$\cdots$};
\node at (10,2) {$C_{s-1}$};
\node at (12,2) {$C_{s}$};
\node at (14,2) {$\cdots$};

\foreach \s in {0,2,...,12}
\foreach \i in {0,2}
\draw [<-] (\s+0.5,\i) -- (\s+1.5,\i);

\node at (-1,0) {$\cE_x:$};
\node at (0,0) {$0$};
\node at (2,0) {$\F_2$};
\node at (4,0) {$M_0$};
\node at (6,0) {$M_1$};
\node at (8,0) {$\cdots$};
\node at (10,0) {$M_{s-1}$};
\node at (12,0) {$\Sigma^t\F_2$};
\node at (14,0) {$0$};

\draw [] (1.95,1.7) -- (1.95,0.4);
\draw [] (2.02,1.7) -- (2.02,0.4);

\foreach \s in {4,6,10,12}
{
  \draw [->] (\s,1.7) -- (\s,0.4);
}
\node [right] at (4,1) {$x_0$};
\node [right] at (6,1) {$x_1$};
\node [right] at (10,1) {$x_{s-1}$};
\node [right] at (12,1) {$x_{s}=x$};

\end{tikzpicture}
\end{figure}

\noindent
then the composites
$(x \otimes x)\Delta_i : C_*  \lra C_*  \otimes C_*  \lra M_* \otimes M_*$
can be computed directly, without passing through $C_* \otimes C_*$, and that,
    if the modules $M_i$ are small, then this calculation is computationally feasible.

We can make this precise as follows.   Let $\cW$ be the usual $\F_2[C_2]$ free resolution
of $\F_2$:   $\cW_i$ is free on one generator $e_i$ and the differential is 
$d(e_i) = (1+\tau)e_{i-1}$.   
Give elements of   $\cW_i \otimes C_s$  homological degree $s+i$ and internal
degree the same as in $C_s$.   

\begin{lemma}
\label{lem:main}
The collection of chain homotopies $\Delta_i$ 
gives a $C_2$-equivariant chain map $D: \cW \otimes C_* \lra C_* \otimes C_*$ by
\[
D(e_i \otimes x) = \Delta_i(x)
\]
and vice versa.   
\end{lemma}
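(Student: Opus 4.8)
The plan is to check directly that the chain-map identity for $D$ is nothing more than a repackaging of the defining relation~\eqref{eqn:chmap}, together with the fact that $\Delta_0 = \Delta$ is a chain map, and to observe that every step of the computation is reversible so that the correspondence genuinely runs in both directions.

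First I would pin down the equivariant structure. The group $C_2 = \langle \tau \rangle$ acts on $\cW \otimes C_*$ through the free action on $\cW$ (carrying $C_*$ trivially) and on $C_* \otimes C_*$ by the transposition $\tau$. Since $\cW_i$ is $\F_2[C_2]$-free on $e_i$, the elements $e_i \otimes x$ and $\tau e_i \otimes x$, as $x$ ranges over an $\F_2$-basis of $C_*$, form an $\F_2$-basis of $\cW \otimes C_*$. Thus the formula $D(e_i \otimes x) = \Delta_i(x)$, extended $\F_2[C_2]$-linearly, defines $D$ unambiguously and forces $D(\tau e_i \otimes x) = \tau \Delta_i(x)$; conversely any $C_2$-equivariant $\F_2$-linear $D$ is determined by its values $\Delta_i(x) := D(e_i \otimes x)$. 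The degrees match: for $x \in C_s$ the element $e_i \otimes x$ has homological degree $s+i$, and $\Delta_i(x) \in (C \otimes C)_{s+i}$, so $D$ preserves homological (and internal) degree.

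The heart of the argument is the chain-map identity. Writing $\partial$ for the differential on $\cW \otimes C_*$ and $\partial' = d \otimes 1 + 1 \otimes d$ for the differential on $C_* \otimes C_*$, and recalling that all signs are trivial over $\F_2$, I would compute for $i > 0$
\[
D(\partial(e_i \otimes x)) = D\bigl((1+\tau)e_{i-1}\otimes x + e_i \otimes dx\bigr) = \Delta_{i-1}(x) + \tau\Delta_{i-1}(x) + \Delta_i(dx),
\]
while
\[
\partial'(D(e_i \otimes x)) = (d \otimes 1 + 1 \otimes d)\Delta_i(x).
\]
These two expressions agree for all $x$ precisely when
\[
(d \otimes 1 + 1 \otimes d)\Delta_i + \Delta_i d = \Delta_{i-1} + \tau\Delta_{i-1},
\]
which is exactly~\eqref{eqn:chmap}, since over $\F_2$ the signs $+$ and $-$ coincide. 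For $i = 0$ the term $d(e_0) = 0$ drops out, and the identity collapses to $(d\otimes 1 + 1 \otimes d)\Delta_0 = \Delta_0 d$, i.e.\ to the statement that $\Delta_0 = \Delta$ is a chain map. Because $D$ is equivariant by construction, it suffices to verify $D\partial = \partial' D$ on the generators $e_i \otimes x$; the identity on $\tau e_i \otimes x$ then follows by applying $\tau$.

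I do not expect a serious obstacle: working in characteristic $2$ removes all the sign subtleties that make the analogous odd-primary bookkeeping delicate, so the computation is essentially forced. The two points deserving care are the indexing at the bottom (where $\cW_{-1} = 0$, giving the base case above) and the claim of a genuine bijection, namely that reading the same calculation backwards recovers~\eqref{eqn:chmap} from an arbitrary equivariant chain map. Both are settled by the reversibility of the single displayed computation together with the freeness of $\cW$, which guarantees that specifying $D$ on the $e_i \otimes x$ and extending equivariantly is both possible and unique.
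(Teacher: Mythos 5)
Your proposal is correct and is exactly the paper's argument: the paper simply asserts in one line that the equations~(\ref{eqn:chmap}) are equivalent to the chain-map identity for $D$, and your computation is the straightforward unpacking of that assertion, including the equivariance, the $i=0$ base case, and the freeness of $\cW$ that gives the converse.
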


\begin{proof}
The equations (\ref{eqn:chmap}) which say that $\Delta_i$ is a chain homotopy 
between $\Delta_{i-1}$ and $\tau \Delta_{i-1}$ 
equivalently say that $D$ is a chain map.
\end{proof}

\begin{theorem}
\label{thm:main}
Let $\widetilde{\Delta}_0 : C_* \lra M_* \otimes M_*$
be a chain map lifting the isomorphism $\F_2 \lra \F_2 \otimes \F_2$ and,
for $i > 0$, let 
$\widetilde{\Delta}_{i} : 
 \widetilde{\Delta}_{i-1} \simeq \tau \widetilde{\Delta}_{i-1}$
be a chain homotopy as in equation~(\ref{eqn:chmap}).
Then $Sq^i(x)$ is the cohomology class of the cocycle
$\widetilde{\Delta}_{s-i} : C_{s+i} \lra 
\Sigma^t \F_2 \otimes \Sigma^t \F_2  \cong \Sigma^{2t} \F_2$.
\end{theorem}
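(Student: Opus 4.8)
The plan is to reduce the theorem to Lemma~\ref{lem:main} by transporting the entire computation across the comparison map $\cE_x \colon C_* \lra M_*$ that defines the extension. The Steenrod operation $Sq^i(x)$ is defined, as in the displayed figure, as the class of $(x\otimes x)\Delta_{s-i} \colon C_{s+i} \lra \Sigma^t\F_2 \otimes \Sigma^t\F_2$. The strategy is to show that the maps $\widetilde\Delta_i \colon C_* \lra M_* \otimes M_*$ hypothesized in the theorem can be taken to be precisely the composites $(x_* \otimes x_*)\Delta_i$, where $x_* \colon C_* \lra M_*$ is the chain map underlying the extension, and then to identify the top component of this composite with $(x\otimes x)\Delta_{s-i}$ in degree $s+i$.

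First I would invoke Lemma~\ref{lem:main} in the following way: packaging the $\Delta_i$ into the single $C_2$-equivariant chain map $D \colon \cW \otimes C_* \lra C_* \otimes C_*$, and packaging the $\widetilde\Delta_i$ into a map $\widetilde D \colon \cW \otimes C_* \lra M_* \otimes M_*$, the defining equation~(\ref{eqn:chmap}) for the $\widetilde\Delta_i$ is exactly the assertion that $\widetilde D$ is a $C_2$-equivariant chain map. Next I would observe that the smash $x_* \otimes x_* \colon C_* \otimes C_* \lra M_* \otimes M_*$ is itself $C_2$-equivariant (the transposition $\tau$ commutes with $x_*\otimes x_*$), so $(x_*\otimes x_*)\circ D$ is a $C_2$-equivariant chain map $\cW \otimes C_* \lra M_*\otimes M_*$ lifting the isomorphism $\F_2 \lra \F_2\otimes\F_2$ in degree zero. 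By the equivariant comparison theorem, any two such lifts are $C_2$-equivariantly chain homotopic, so we may take $\widetilde D = (x_*\otimes x_*)\circ D$, i.e. $\widetilde\Delta_i = (x_*\otimes x_*)\Delta_i$, without changing the cohomology class of the resulting cocycle.

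With this identification in hand, the final step is to read off the relevant component. Because $M_*$ is concentrated so that $M_{s-1}$ is the last nonzero module before $\Sigma^t\F_2$ in degree $s$, the only part of $M_*\otimes M_*$ in bidegree $(2s, 2t)$ is $\Sigma^t\F_2 \otimes \Sigma^t\F_2 \cong \Sigma^{2t}\F_2$, and the projection of $x_*\otimes x_*$ onto this summand is exactly $x\otimes x$. Therefore the cocycle $\widetilde\Delta_{s-i} \colon C_{s+i} \lra \Sigma^t\F_2\otimes\Sigma^t\F_2$ agrees with $(x\otimes x)\Delta_{s-i}$ in the stated degree, and its class is $Sq^i(x)$ by definition. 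I expect the main obstacle to be bookkeeping the degrees carefully: one must check that $\widetilde\Delta_{s-i}$ restricted to $C_{s+i}$ lands in the correct top summand and that no lower-filtration terms of $M_*\otimes M_*$ contribute, which is where the smallness and the grading conventions on $\cW\otimes C_*$ established before the lemma are essential. The equivariant comparison theorem and the cocycle-versus-class distinction are routine once this grading analysis is pinned down.
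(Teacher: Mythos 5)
Your proposal is correct and follows essentially the same route as the paper: both package the homotopies into a $C_2$-equivariant chain map $\cW\otimes C_*\to M_*\otimes M_*$ via Lemma~\ref{lem:main}, observe that $(x\otimes x)\circ D$ is one such map covering $\F_2\to\F_2\otimes\F_2$, and invoke the comparison theorem to identify it, up to chain homotopy, with the directly constructed lift $\widetilde{D}$. Your extra paragraph on the degree bookkeeping in the top summand is a harmless elaboration of what the paper leaves implicit.
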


\begin{proof}
A chain map $x : C_* \lra M_*$ gives a $C_2$-equivariant chain map 
$x \otimes x : C_* \otimes C_* \lra M_* \otimes M_*$.   
Composing, we get a $C_2$-equivariant chain map
\[
\widetilde{D} : \cW \otimes C_* \lra C_* \otimes C_* \lra M_* \otimes M_*
\]
covering the isomorphism 
$\F_2 \lra \F_2 \otimes \F_2$
in the category of $\cA$-modules.

By the comparison theorem, any two such $\widetilde{D}$
are chain homotopic.   Thus, rather than computing it as 
the composite $(x \otimes x) D$, we may
compute one directly by lifting
the map from $\cW_0 \otimes \F_2 $ to $ \F_2 \otimes  \F_2$
which sends $e_0 \otimes 1$ to $1 \otimes 1$ to a chain map.

Lemma~\ref{lem:main} relating $D$ and $\Delta$
applies equally well to the relation between $\widetilde{D}$
and $\widetilde{\Delta}$, proving the theorem.
\end{proof}

Calculation of the Steenrod operations on $x$ now depends on finding sufficiently
amenable extensions $\cE_x$.

\section{The canonical extension}

By taking pushouts, starting from the cocycle $x : C_s \lra M_s = \Sigma^t \F_2$,
we obtain an extension $\cE_x$ associated to $x$.   Precisely, $M_i$ is the pushout
of $C_{i+1} \lra C_i$ and $C_{i+1} \lra M_{i+1}$.    However, this canonical
extension is no better than the resolution $C$ itself, in that $M_i \cong C_i$ for
$i < s-1$, so the size problem associated to $C \otimes C$ is not eliminated.

\section{An extension for $c_0$ and the $Sq^i(c_0)$}

The lowest degree class not in the subalgebra generated by the $h_i$ is 
the class $c_0 \in \Ext_{\cA}(\F_2,\F_2)$.   Let $\cE_{c_0}$ be the extension
in Figure~\ref{fig:Ec0}.

\begin{figure}[h]
\begin{tikzpicture}[scale=0.5]

\draw [] (0,0) circle [radius=0.07];
\node [below right] at (4,0) {\tiny{$k_0$}};
\draw [] (4,0) circle [radius=0.07];
\draw [] (4,1) circle [radius=0.07];
\draw [] (4,3) circle [radius=0.07];
\draw [] (4,7) circle [radius=0.07];
\draw (4,0.1) -- (4,0.9);
\draw [] (3.9,1.1) .. controls (3.7,1.7) and (3.7,2.3) .. (3.9,2.9);
\draw [] (3.9,3) -- (3.7,3) -- (3.7,7) -- (3.9,7);
\node [below right] at (8,1) {\tiny{$k_1$}};
\draw [] (8,1) circle [radius=0.07];
\draw [] (8,3) circle [radius=0.07];
\draw [] (8,5) circle [radius=0.07];
\draw [] (8,7) circle [radius=0.07];
\draw [] (8,9) circle [radius=0.07];
\draw [] (7.9,1.1) .. controls (7.7,1.7) and (7.7,2.3) .. (7.9,2.9);
\draw [] (7.9,3) -- (7.7,3) -- (7.7,7) -- (7.9,7);
\draw [] (7.9,7.1) .. controls (7.7,7.7) and (7.7,8.3) .. (7.9,8.9);
\draw [] (8.1,1) -- (8.3,1) -- (8.3,4.8) -- (8.1,4.95);
\draw [] (8.1,5.05) -- (8.3,5.2) -- (8.3,9) -- (8.1,9);
\node [below right] at (12,5) {\tiny{$k_2$}};
\draw [] (12,5) circle [radius=0.07];
\draw [] (12,9) circle [radius=0.07];
\draw [] (12,11) circle [radius=0.07];
\draw [] (12.1,5) -- (12.3,5) -- (12.3,9) -- (12.1,9);
\draw [] (12.1,9.1) .. controls (12.3,9.7) and (12.3,10.3) .. (12.1,10.9);
\node [below right] at (16,11) {\tiny{$k_3$}};
\draw [] (16,11) circle [radius=0.07];

\draw [<-] (0.2,0) -- (3.8,0);
\draw [<-] (4.2,1) -- (7.8,1);
\draw [<-] (4.2,3) -- (7.5,3);
\draw [<-] (4.2,7) -- (7.5,7);
\draw [<-] (8.3,5) -- (11.8,5);
\draw [<-] (8.5,9) -- (11.8,9);
\draw [<-] (12.2,11) -- (15.8,11);

\foreach \s in {0,1,3,5,7,9,11}
  \node at (-1,\s) {\tiny{$\s$}};

\node at (0,-2) {\small{$\F_2$}};
\draw [<-] (0.5,-2) -- (3.5,-2);
\node at (4,-2) {\small{$C_0$}};
\draw [<-] (4.5,-2) -- (7.5,-2);
\node at (8,-2) {\small{$C_1$}};
\draw [<-] (8.5,-2) -- (11.5,-2);
\node at (12,-2) {\small{$C_2$}};
\draw [<-] (12.5,-2) -- (15.3,-2);
\node at (16.2,-2) {\small{$\Sigma^{11}\F_2$}};

\end{tikzpicture}
\caption{The extension $\cE_{c_0}$}
\label{fig:Ec0}
\end{figure}

\begin{proposition}
The extension $\cE_{c_0}$  represents the cocycle $c_0 = 3_3 \in \Ext_{\cA}^{3,11}(\F_2,\F_2)$.
\end{proposition}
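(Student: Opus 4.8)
The plan is to compute directly the class that $\cE_{c_0}$ represents, by lifting $\mathrm{id}_{\F_2}$ to a chain map $c_\bullet : C_\bullet \to M_\bullet$ from the minimal resolution $C_\bullet \to \F_2$ to the extension, exactly as was done for $h_0 h_1$ in Figures~\ref{fig:h0h1} and~\ref{fig:h0h1ch}. Writing $M_0, M_1, M_2, M_3 = \Sigma^{11}\F_2$ for the modules of Figure~\ref{fig:Ec0}, the comparison theorem produces such a $c_\bullet$, unique up to homotopy, and the class represented by $\cE_{c_0}$ is the cohomology class of the top cocycle $c_3 : C_3 \to \Sigma^{11}\F_2$. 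Because the resolution is minimal, the differential in $\Hom_{\cA}(C_\bullet, \Sigma^{11}\F_2)$ vanishes, so this class is read off from the value of $c_3$ on the generators of $C_3$ in internal degree $11$. Since $\Ext^{3,11}_{\cA}(\F_2,\F_2)$ is one-dimensional, spanned by $c_0 = 3_3$, it suffices to prove that $c_3(3_3) \neq 0$.

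I would first assemble the two inputs. From Figure~\ref{fig:Ec0} I read off the generators $k_0, k_1, k_2, k_3$ in internal degrees $0, 1, 5, 11$, the internal $\cA$-action given by the edges within each column, and the connecting homomorphisms $\partial k_0 = 1$, $\partial k_1 = Sq^1 k_0$, $\partial k_2 = Sq^4 k_1$, and $\partial k_3 = Sq^2 Sq^4 k_2$. From the minimal resolution I would record, through internal degree $11$, the generators of $C_1, C_2, C_3$ and the differential $d$ in terms of Steenrod operations, the essential pieces being $d(1_0) = Sq^1\, 0_0$, the differential on the internal-degree-$5$ generator $y$ of $C_2$ (representing $h_0 h_2$), and, above all, $d(3_3)$.

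Next I would build $c_\bullet$ by the usual inductive lifting. Each $C_i$ is free, so $c_i$ is determined $\cA$-linearly by its values on generators; and for a generator $g$ the element $c_{i-1}(dg) \in M_{i-1}$ satisfies $\partial\, c_{i-1}(dg) = c_{i-2}(d(dg)) = 0$, so by exactness of $\cE_{c_0}$ it lies in the image of $\partial$ and admits a lift $c_i(g)$. The low-degree values drop out at once: $c_0(0_0) = k_0$; then $c_1(1_0) = k_1$ because $c_0(d\,1_0) = Sq^1 k_0 = \partial k_1$, while $c_1$ vanishes on $1_1, 1_2, \dots$ since $M_0$ carries nothing in their degrees; and $c_2(y) = k_2$ because the term $Sq^4\,1_0$ occurs in $d(y)$ and $c_1(Sq^4\,1_0) = Sq^4 k_1 = \partial k_2$. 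In this way the chain map filters the enormous resolution onto the thin extension.

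The crux is the evaluation of $c_3(3_3)$. Since $M_2$ is concentrated in internal degrees $5, 9, 11$ and its degree-$11$ part is the single line spanned by $Sq^2 Sq^4 k_2 = \partial k_3$, the element $c_2(d\,3_3)$ is either $0$ or $\partial k_3$, and $c_3(3_3) = k_3$ precisely in the latter case. Computing it means applying $c_2$ term-by-term to $d(3_3)$, using $\cA$-linearity and the relations in $M_2$ (so that, for instance, $Sq^1 k_2 = 0$ and $Sq^6 k_2 = Sq^2 Sq^4 k_2$ via $Sq^2 Sq^4 = Sq^6 + Sq^5 Sq^1$), and checking that the degree-$11$ total is $Sq^2 Sq^4 k_2$ rather than $0$. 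The main obstacle is therefore the explicit determination of $d(3_3)$, together with the values of $c_2$ on those generators of $C_2$ meeting $d(3_3)$; I would extract these from the first author's computed minimal resolution, or from the known structure of $\Ext$ in this range. Granting that datum, the remaining Adem bookkeeping is routine, and the nonvanishing of $c_3(3_3)$ identifies the class of $\cE_{c_0}$ as $c_0 = 3_3$.
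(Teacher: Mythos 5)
Your proposal follows the paper's own proof: the paper likewise lifts the identity to a chain map $c : C_* \to \cE_{c_0}$ and simply records its nonzero values ($c(0_0)=k_0$, $c(1_0)=k_1$, $c(2_2)=k_2$, $c(2_4)=Sq^4k_2$, $c(3_3)=k_3$), the last of which is exactly the verification you identify as the crux, namely $c_2(d(3_3))=Sq^6 k_2=Sq^2Sq^4k_2=\partial k_3$ using $d(3_3)$ from Section~\ref{sec:minimal}. The only quibble is a harmless slip: $c_1$ vanishes on $1_1,1_2,\dots$ because $M_1$ (not $M_0$) has nothing in degrees $2,4,8,\dots$.
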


\begin{proof}
It suffices to exhibit the following  chain map $c:C_* \lra \cE_{c_0}$
from the minimal resolution in Section~\ref{sec:minimal}
to $\cE_{c_0}$.  The nonzero values of $c$ are
\begin{itemize}
\item[0:] $c(0_0) = k_0$
\item[1:] $c(1_0) = k_1$
\item[2:] $c(2_2) = k_2$ 
\item[\phantom{2:}] $c(2_4) = Sq^4 k_2$
\item[3:] $c(3_3) = k_3$
\end{itemize}
\end{proof}

\begin{proposition}
\label{prop:sqc0}
The squaring operations on $c_0$ are
$Sq^*(c_0) = (6_5, 5_6, 4_6, 3_9) = (c_0^2, h_0 e_0, f_0, c_1)$.
\end{proposition}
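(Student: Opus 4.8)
The plan is to apply Theorem~\ref{thm:main} to the extension $\cE_{c_0}$, for which $s = 3$ and $t = 11$. Writing $M_*$ for the complex $\F_2 \lla M_0 \lla M_1 \lla M_2 \lla \Sigma^{11}\F_2$ underlying $\cE_{c_0}$, with the elements $k_0, k_1, k_2, k_3$ of Figure~\ref{fig:Ec0}, one checks that $(M \otimes M)_6 = M_3 \otimes M_3 = k_3 \otimes k_3 \cong \Sigma^{22}\F_2$ is the only summand of the tensor square in homological degree $6$. Theorem~\ref{thm:main} then identifies $Sq^i(c_0)$ with the class of the cocycle $\widetilde{\Delta}_{3-i} : C_{3+i} \lra k_3 \otimes k_3 \cong \Sigma^{22}\F_2$ for $0 \leq i \leq 3$. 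The top operation $Sq^3(c_0)$ uses only $\widetilde{\Delta}_0$, which is the diagonal lift $(c_0 \otimes c_0)\Delta$, so $Sq^3(c_0) = c_0^2 = 6_5$ is immediate and requires no further work.

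The remaining three values all flow from a single computation: building the $C_2$-equivariant chain map $\widetilde{D} : \cW \otimes C_* \lra M_* \otimes M_*$ of Lemma~\ref{lem:main}, or equivalently the sequence of chain homotopies $\widetilde{\Delta}_0, \widetilde{\Delta}_1, \widetilde{\Delta}_2, \widetilde{\Delta}_3$. I would start from the diagonal lift $\widetilde{\Delta}_0$ and then solve the cocycle equations~(\ref{eqn:chmap}) successively for $\widetilde{\Delta}_1$, $\widetilde{\Delta}_2$, and $\widetilde{\Delta}_3$, at each stage choosing a nullhomotopy of $\widetilde{\Delta}_{j-1} - \tau \widetilde{\Delta}_{j-1}$ in $M_* \otimes M_*$. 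Because the $M_i$ are small, $M_* \otimes M_*$ is small enough that this is feasible by hand or by a short machine calculation; in practice one need only propagate the components that can eventually reach the top summand $k_3 \otimes k_3$.

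With the $\widetilde{\Delta}_j$ in hand I would read off the three cocycles $\widetilde{\Delta}_1 : C_5 \lra \Sigma^{22}\F_2$, $\widetilde{\Delta}_2 : C_4 \lra \Sigma^{22}\F_2$, and $\widetilde{\Delta}_3 : C_3 \lra \Sigma^{22}\F_2$, representing $Sq^2(c_0) \in \Ext^{5,22}_{\cA}$, $Sq^1(c_0) \in \Ext^{4,22}_{\cA}$, and $Sq^0(c_0) \in \Ext^{3,22}_{\cA}$ respectively. Since $C_*$ is minimal, each such cocycle is a sum of duals of the degree-$22$ generators of $C_{3+i}$, so the final step is to determine which generator is hit -- giving the labels $5_6$, $4_6$, and $3_9$ -- and then to match these with the named classes $h_0 e_0$, $f_0$, and $c_1$ using the known additive and multiplicative structure of $\Ext_{\cA}(\F_2,\F_2)$ in this range. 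The value $Sq^0(c_0) = c_1$ is moreover consistent with the expected doubling behaviour $Sq^0(c_i) = c_{i+1}$, which provides a useful check.

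The main obstacle I anticipate is the construction and bookkeeping of $\widetilde{\Delta}_1$, $\widetilde{\Delta}_2$, and especially $\widetilde{\Delta}_3$: each chain homotopy is determined only up to a further homotopy, so the values must be pinned down on exactly the generators of $C_5$, $C_4$, and $C_3$ that feed into $k_3 \otimes k_3$, and the transposition $\tau$ on $M_* \otimes M_*$ must be tracked carefully at each stage. A secondary difficulty is the identification step: confirming that $Sq^2(c_0)$ is the product $h_0 e_0$ rather than some other class of $\Ext^{5,22}_{\cA}$, and that $Sq^1(c_0)$ is the indecomposable $f_0$, relies on knowing the structure of $\Ext_{\cA}(\F_2,\F_2)$ in these bidegrees rather than being forced by the cocycle computation alone.
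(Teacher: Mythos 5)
Your proposal is correct and follows essentially the same route as the paper: apply Theorem~\ref{thm:main} to $\cE_{c_0}$, construct $\widetilde{\Delta}_0$ and the successive chain homotopies $\widetilde{\Delta}_1,\widetilde{\Delta}_2,\widetilde{\Delta}_3$ satisfying equation~(\ref{eqn:chmap}), and read off $Sq^{3-j}(c_0)$ from the generators of $C_{6-j}$ sent to $k_3\otimes k_3$ (the paper simply records these values in Tables~\ref{table:Dc0} and~\ref{table:Dic0}). The only slight difference of emphasis is that the paper treats $4_6=f_0$ as the \emph{definition} of $f_0$ (see the following corollary) rather than as an identification requiring prior knowledge of $\Ext_{\cA}(\F_2,\F_2)$.
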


This allows us to determine which one of the two indecomposable elements in
$\Ext^{4,22}_{\cA}(\F_2,\F_2)$ is $f_0$, defined as $Sq^1(c_0)$.   (Note that
definitions of $f_0$ by Toda brackets cannot distinguish between 
$f_0 = 4_6$ and $f_0 + h_1^3 h_4 = 4_6 + 4_7$ because of their indeterminacy.)

\begin{corollary}
The element $f_0 = Sq^1(c_0)$ is dual to $4_6$ in the minimal resolution of Section~\ref{sec:minimal}.
\end{corollary}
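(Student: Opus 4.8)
The plan is to extract the corollary directly from Proposition~\ref{prop:sqc0}. By definition $f_0 = Sq^1(c_0)$, and the proposition records $Sq^1(c_0) = 4_6$ as its penultimate entry. Because the resolution of Section~\ref{sec:minimal} is minimal, every differential induced on $\Hom_{\cA}(C_*,\F_2)$ vanishes, so $\Ext^{4,22}_{\cA}(\F_2,\F_2)$ is spanned by the cocycles dual to the $\cA$-generators of $C_4$ in internal degree $22$. The symbol $4_6$ names precisely the dual basis vector attached to one such generator; hence the assertion $Sq^1(c_0) = 4_6$ is literally the statement that $f_0$ is dual to $4_6$, and no further argument is required once the proposition is in hand.

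The real content therefore sits inside Proposition~\ref{prop:sqc0}, and for this corollary only its $Sq^1$ value matters. I would obtain that value from Theorem~\ref{thm:main} applied to the extension $\cE_{c_0}$ of Figure~\ref{fig:Ec0}, for which $s = 3$ and $t = 11$: the class $Sq^1(c_0)$ is represented by the cocycle $\widetilde{\Delta}_{s-1} = \widetilde{\Delta}_2 : C_4 \lra M_* \otimes M_*$ with target $\Sigma^{11}\F_2 \otimes \Sigma^{11}\F_2 \cong \Sigma^{22}\F_2$. Concretely one builds the short tower $\widetilde{\Delta}_0, \widetilde{\Delta}_1, \widetilde{\Delta}_2$: choose $\widetilde{\Delta}_0$ lifting $\F_2 \to \F_2 \otimes \F_2$, then solve equation~(\ref{eqn:chmap}) in turn for the $C_2$-equivariant homotopies $\widetilde{\Delta}_1$ and $\widetilde{\Delta}_2$, using the explicit differentials of $C_*$ and of $\cE_{c_0}$. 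Evaluating $\widetilde{\Delta}_2$ on the internal-degree-$22$ generators of $C_4$ and recording where it is nonzero produces the dual vector $4_6$.

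The step I expect to be delicate is not in the corollary at all but in this construction of $\widetilde{\Delta}_2$: one must propagate the equivariant chain homotopies through $M_* \otimes M_*$ with consistent choices of lift, and in particular determine the coefficient of $4_7$ in the answer. This is exactly the ambiguity flagged in the remark, since Toda-bracket definitions of $f_0$ carry an indeterminacy that cannot separate $4_6$ from $4_6 + 4_7 = 4_6 + h_1^3 h_4$. The virtue of the extension method is that it computes this coefficient outright, forcing it to vanish; granting Proposition~\ref{prop:sqc0}, the corollary follows immediately.
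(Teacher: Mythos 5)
Your proposal is correct and matches the paper's treatment: the corollary is read off immediately from Proposition~\ref{prop:sqc0} (whose third entry $4_6$ is indeed $Sq^1(c_0)$, in the convention $Sq^*(c_0)=(Sq^3,Sq^2,Sq^1,Sq^0)(c_0)$), and your sketch of how that entry is produced --- via Theorem~\ref{thm:main} applied to $\cE_{c_0}$ with $s=3$, $t=11$, so that $Sq^1(c_0)$ is the class of $\widetilde{\Delta}_2$ on $C_4$ in internal degree $22$ --- is exactly what Table~\ref{table:Dic0} records, with $\Delta_2(4_6)=k_3\otimes k_3$ and no contribution on $4_7$. No further comment is needed.
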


\begin{remark}
We identify $\cA$ generators of the minimal resolution with their duals in $\Ext_{\cA}$
to avoid having to explicitly note the duality.  For example, $4_6 + 4_7$ denotes the cocycle
which evaluates to $1$ on each of $4_6$ and $4_7$, while $4_6$ denotes the cocycle which
evaluates to $1$ on $4_6$ and to $0$ on $4_7$.
\end{remark}

\begin{proof}[Proof of Proposition~\ref{prop:sqc0}]
It suffices to record the values of $\Delta_i$, where $\Delta_0 = \Delta$ is the chain map
$C \lra \cE_{c_0} \otimes \cE_{c_0}$ lifting the identity map of $\F_2$, and the $\Delta_i$ for
$i>0$ satisfy equation~(\ref{eqn:chmap}).  
We do this in Tables~\ref{table:Dc0}
and \ref{table:Dic0}.

\begin{longtable}{|>{$} l <{$} | >{$} l <{$} | }
\caption{Nonzero values of the diagonal map on $c_0$
\label{table:Dc0}}\\
\hline
s_g & \Delta  \\
\hline
\endfirsthead
\caption{Nonzero values of the diagonal map on $c_0$ (cont).}\\
\hline
s_g & \Delta(s_g) \\
\hline
\endhead
\hline
\endfoot

0_0 & k_0 \otimes k_0  \\
\hline
1_0 & k_0 \otimes k_1 + k_1 \otimes k_0  \\
1_1 & Sq^1 k_0 \otimes k_1   \\
\hline
2_0 & k_1 \otimes k_1   \\
2_1 & Sq^2 k_1 \otimes k_1   \\
2_2 & k_0 \otimes k_2 + k_2 \otimes k_0   \\
2_3 & Sq^2 Sq^1 k_0 \otimes k_2  \\
2_4 & k_0 \otimes Sq^4 k_2 + Sq^4 k_2 \otimes k_0  \\
2_5 & k_1 \otimes Sq^4 Sq^4 k_1 + Sq^4 Sq^4 k_1 \otimes k_1  \\
\hline
3_1 & k_1 \otimes k_2 + k_2 \otimes k_1 \\
3_2 & k_1 \otimes Sq^4 k_2 + Sq^4 k_2 \otimes k_1 + k_2 \otimes Sq^4 k_1 \\
3_3 & k_0 \otimes k_3 + k_3 \otimes k_0 \\
3_4 & Sq^1 k_0 \otimes k_3 + k_3 \otimes Sq^1 k_0 +
      Sq^4 Sq^2 k_1 \otimes k_2 + Sq^2 k_1 \otimes Sq^4 k_2 \\
3_6 & Sq^4 k_2 \otimes Sq^4 Sq^4 k_1 \\
\hline
4_3 & Sq^4 Sq^2 k_1 \otimes k_3 + k_3 \otimes Sq^4 Sq^2 k_1 + 
      Sq^4 k_2 \otimes Sq^4 k_2 \\
\hline
5_4 & Sq^4 k_2 \otimes k_3 + k_3 \otimes Sq^4 k_2 \\ 
5_6 & Sq^2 Sq^4 k_2 \otimes k_3  \\ 
\hline
6_5 & k_3 \otimes k_3 \\

\end{longtable}

\begin{longtable}{| >{$} l <{$} |  >{$} l <{$} |  >{$} l <{$} |  >{$} l <{$} | }
\caption{Nonzero values of the higher diagonal maps on $c_0$
\label{table:Dic0}}\\
\hline
s_g  &  \Delta_1(s_g) & \Delta_2(s_g) & \Delta_3(s_g) \\
\hline
\endfirsthead
\caption{Nonzero values of the higher diagonal maps on $c_0$ (cont).}\\
\hline
s_g &  \Delta_1(s_g) & \Delta_2(s_g) & \Delta_3(s_g) \\
\hline
\endhead
\hline
\endfoot

1_1 & k_1 \otimes k_1 & & \\
\hline
2_3 & 
      Sq^2 k_1 \otimes k_2 + k_2 \otimes Sq^2 k_1 &
      & \\
2_5 & 
      Sq^4 k_1 \otimes k_2 &
      k_2 \otimes k_2 & \\
2_8 & 
      Sq^4 Sq^4 k_1 \otimes Sq^4 k_2 &
      Sq^4 k_2 \otimes Sq^4 k_2 & \\
\hline
3_2 &
     k_2 \otimes k_2 &
     & \\
3_6 &
     Sq^4 k_2 \otimes Sq^4 k_2 &
     & \\
3_7 &
     Sq^4 k_2 \otimes Sq^2 Sq^4 k_2 &
     & \\
3_9 &
    & Sq^2 Sq^4 k_2 \otimes k_3 
    & k_3 \otimes k_3 \\
\hline
4_6 & Sq^2 Sq^4 k_2 \otimes k_3 
    & k_3 \otimes k_3 & \\
    
\hline
5_6 & k_3 \otimes k_3 & & \\

\end{longtable}
\end{proof}

\section{An extension for $c_1$ and the $Sq^i(c_1)$}

The extension for $c_1$ is the `double' of that for $c_0$:   we simply
replace every $Sq^i$ by $Sq^{2i}$ in $\cE_{c_0}$ to get $\cE_{c_1}$.

\begin{proposition}
The extension $\cE_{c_1}$  represents the cocycle 
$c_1 = 3_9 \in \Ext_{\cA}^{3,22}(\F_2,\F_2)$.
\end{proposition}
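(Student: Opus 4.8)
The plan is to mirror the proof of the preceding proposition for $c_0$: it suffices to write down an explicit chain map $c : C_* \lra \cE_{c_1}$ from the minimal resolution of Section~\ref{sec:minimal} which lifts the identity of $\F_2$, and then to read off the cocycle it represents on the top term $\Sigma^{22}\F_2$. Before producing the map, though, I would first confirm that $\cE_{c_1}$ really is an extension of $\cA$-modules. This is not automatic, since the substitution $Sq^i \mapsto Sq^{2i}$ is not induced by an algebra endomorphism of $\cA$ (for example $Sq^1 Sq^1 = 0$ while $Sq^2 Sq^2 = Sq^3 Sq^1 \neq 0$). The conceptual justification is that doubling is realized by the functor $\Phi$ on $\cA$-modules dual to the Frobenius $\xi_i \mapsto \xi_i^2$ on $\cA_*$: this $\Phi$ doubles internal degrees, sends a $Sq^i$-labelled structure map to a $Sq^{2i}$-labelled one, and preserves exactness, so that $\cE_{c_1} = \Phi(\cE_{c_0})$ is genuinely a short exact sequence of $\cA$-modules.

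Granting this, the chain map is the doubled analogue of the one used for $c_0$, landing now on the generators of $C_*$ that sit in the doubled internal degrees. I expect the nonzero values $c(0_0) = k_0$ and $c(1_1) = k_1$ — the degree-$2$ generator $h_1 = 1_1$ taking over the role that $h_0 = 1_0$ played for $c_0$ — together with $c(2_g) = k_2$ and $c(2_{g'}) = Sq^8 k_2$ for the two generators of $C_2$ in internal degrees $10$ and $18$, and finally $c(3_9) = k_3$. Checking that these assemble into an $\cA$-linear chain map is the same finite verification as in the $c_0$ case, now run with $Sq^2, Sq^4, Sq^8$ in place of $Sq^1, Sq^2, Sq^4$. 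Since the top of $\cE_{c_1}$ is $\Sigma^{22}\F_2$ and the map reaches its generator $k_3$ from $3_9 \in C_3$, the represented cocycle is dual to $3_9$, that is, $c_1$.

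The main obstacle is the first step: verifying that doubling carries a valid $\cA$-module extension to a valid one, i.e.\ that each structure relation encoded in Figure~\ref{fig:Ec0} survives the substitution $Sq^i \mapsto Sq^{2i}$. Rather than invoking $\Phi$ abstractly, one can check this by hand, confirming that the secondary relations built into the modules of $\cE_{c_0}$ (the arcs and brackets of the figure) double to genuine relations in $\cA$; this is a finite but slightly delicate computation with Adem relations. A secondary piece of bookkeeping is the identification of which generators $2_g, 2_{g'}$ of the minimal resolution occupy internal degrees $10$ and $18$, and the confirmation that no further generators enter the lift; both follow at once from the resolution data of Section~\ref{sec:minimal}, but this is the point at which one must consult that data rather than merely transcribe the $c_0$ computation.

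Finally, as a consistency check the answer is already implicit in Proposition~\ref{prop:sqc0}, where $Sq^0(c_0) = 3_9 = c_1$. The bottom operation $Sq^0$ is exactly the doubling homomorphism on $\Ext_{\cA}(\F_2,\F_2)$, so naturality of the correspondence between extensions and $\Ext$ classes predicts that $\Phi(\cE_{c_0})$ represents $Sq^0(c_0) = c_1$, in agreement with the direct chain-map computation above.
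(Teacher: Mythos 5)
Your proposal matches the paper's proof: the paper likewise establishes the result by exhibiting the chain map $c:C_*\lra\cE_{c_1}$ with nonzero values $c(0_0)=k_0$, $c(1_1)=k_1$, $c(2_5)=k_2$, $c(2_8)=Sq^8k_2$, $c(3_9)=k_3$, which is exactly the doubled lift you describe (the generators in internal degrees $10$ and $18$ are indeed $2_5$ and $2_8$). Your additional remarks on why doubling preserves exactness and on the $Sq^0$ consistency check are sound but supplementary; the core argument is the same.
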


\begin{proof}
It suffices to exhibit this chain map $c:C_* \lra \cE_{c_1}$
from the minimal resolution in Section~\ref{sec:minimal}
to $\cE_{c_1}$.  The nonzero values of $c$ are
\begin{itemize}
\item[0:] $c(0_0) = k_0$
\item[1:] $c(1_1) = k_1$
\item[2:] $c(2_5) = k_2$ 
\item[\phantom{2:}] $c(2_8) = Sq^8 k_2$
\item[3:] $c(3_9) = k_3$
\end{itemize}
\end{proof}

\begin{proposition}
\label{prop:sqc1}
The squaring operations on $c_1$ are
$Sq^*(c_1) = (6_{17}, 5_{19}, 4_{19}, 3_{19}) =
(c_1^2, h_1 e_1, f_1, c_2)$.
\end{proposition}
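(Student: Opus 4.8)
The plan is to apply Theorem~\ref{thm:main} to $\cE_{c_1}$ exactly as was done for $c_0$, while exploiting the fact that $\cE_{c_1}$ is the \emph{double} of $\cE_{c_0}$ so as to reuse the work of Proposition~\ref{prop:sqc0} rather than recompute from scratch. Writing $\Phi$ for the doubling (Frobenius twist) endofunctor on $\cA$-modules, which replaces each $Sq^i$ by $Sq^{2i}$, we have $\cE_{c_1} = \Phi\,\cE_{c_0}$. The structural facts I would lean on are that $\Phi$ is symmetric monoidal, so $\cE_{c_1} \otimes \cE_{c_1} = \Phi(\cE_{c_0} \otimes \cE_{c_0})$, and that $\Phi$ commutes with the transposition $\tau$. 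Consequently the diagonal data $\widetilde{\Delta}_i$ landing in $\cE_{c_1} \otimes \cE_{c_1}$ may be obtained, up to a relabeling of the source, by doubling the data tabulated for $c_0$.

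Concretely, I would first fix the doubling self-map of the minimal resolution $C_*$ realizing $Sq^0$ on $\Ext$; on the generators relevant to $c_0$ it sends $1_0 \mapsto 1_1$, $2_2 \mapsto 2_5$, $2_4 \mapsto 2_8$ and $3_3 \mapsto 3_9$, which is precisely the pattern already visible upon comparing the chain maps $c : C_* \lra \cE_{c_0}$ and $c : C_* \lra \cE_{c_1}$. I would then produce the analogues of Tables~\ref{table:Dc0} and~\ref{table:Dic0} by the formal substitution $Sq^i \mapsto Sq^{2i}$ together with this relabeling of the $s_g$, and verify that the resulting formulas still satisfy equation~(\ref{eqn:chmap}). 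Since $\Phi$ respects both the differential of $C_*$ and $\tau$, no new terms can appear and the check reduces to confirming that internal degrees double correctly.

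Reading off $\widetilde{\Delta}_{3-i}$ on the top cell then yields the four operations $Sq^3 c_1, Sq^2 c_1, Sq^1 c_1, Sq^0 c_1$ in $\Ext_{\cA}^{6,44}, \Ext_{\cA}^{5,44}, \Ext_{\cA}^{4,44}, \Ext_{\cA}^{3,44}$, and identifies them with the duals of $6_{17}, 5_{19}, 4_{19}, 3_{19}$. To name these classes I would argue structurally: the Adem relation with leading index $0$ gives $Sq^0 Sq^i = Sq^i Sq^0$, and the Cartan formula gives multiplicativity of $Sq^0$, so from $c_1 = Sq^0 c_0$ one computes $Sq^i(c_1) = Sq^0 Sq^i(c_0)$. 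Feeding in Proposition~\ref{prop:sqc0} together with the standard doublings $Sq^0 h_0 = h_1$, $Sq^0 e_0 = e_1$, $Sq^0 f_0 = f_1$, $Sq^0 c_1 = c_2$ turns $(c_0^2, h_0 e_0, f_0, c_1)$ into $(c_1^2, h_1 e_1, f_1, c_2)$.

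The step I expect to be the main obstacle is pinning down the doubling correspondence on the generators of $C_*$ precisely enough to guarantee that the doubled $\widetilde{\Delta}_i$ are honest chain homotopies, i.e.\ that the $c_0$ homotopies may be reused verbatim after doubling the target. The risk is purely clerical: a term that vanished for degree reasons in the $c_0$ tables could in principle survive after doubling, so I would scrutinize the entries attached to $2_5, 2_8$ and $3_9$, where the relabeling is most apt to introduce a discrepancy. Should one wish to sidestep the self-map entirely, the fallback is to recompute the tables directly for $\cE_{c_1}$ in the manner of Proposition~\ref{prop:sqc0}; the structural argument above then serves as an independent check on the answer.
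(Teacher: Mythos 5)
Your proposal takes essentially the same route as the paper: the paper proves Proposition~\ref{prop:sqc1} by exhibiting the chain map $\Delta$ and the higher homotopies $\Delta_i$ for $\cE_{c_1}$ in Tables~\ref{table:Dc1} and~\ref{table:Dic1}, and those tables are exactly the doubles of Tables~\ref{table:Dc0} and~\ref{table:Dic0} under the generator correspondence you describe ($1_0\mapsto 1_1$, $2_2\mapsto 2_5$, $2_4\mapsto 2_8$, $3_3\mapsto 3_9$, \dots), the point being that all non-doubled terms in the differential of $C_*$ act as zero on the doubled extension. Your supplementary $Sq^0$/Adem/Cartan argument is a valid consistency check on the names $(c_1^2, h_1e_1, f_1, c_2)$, but it cannot by itself identify the minimal-resolution cocycles $6_{17}, 5_{19}, 4_{19}, 3_{19}$, so the chain-level tables remain the substance of the proof, just as in the paper.
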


This allows us to determine which element in the
minimal resolution of Section~\ref{sec:minimal} is $f_1 = Sq^1(c_1)$.

\begin{corollary}
The element $f_1 = Sq^1(c_1)$ is dual to $4_{19}$ in the minimal 
resolution of Section~\ref{sec:minimal}.
\end{corollary}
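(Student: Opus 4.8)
The corollary is read off directly from Proposition~\ref{prop:sqc1}. By definition $f_1 = Sq^1(c_1)$, and the third entry of the list $Sq^*(c_1) = (6_{17}, 5_{19}, 4_{19}, 3_{19})$ records precisely $Sq^1(c_1) = 4_{19}$. So my plan is to establish Proposition~\ref{prop:sqc1} and then simply invoke it. The real content is the disambiguation: one must see that $Sq^1(c_1)$ is \emph{exactly} the class dual to $4_{19}$ and not some other combination of generators in $\Ext^{4,44}_{\cA}(\F_2,\F_2)$, exactly as the Remark after Proposition~\ref{prop:sqc0} pins down $f_0 = 4_6$ against $4_6 + 4_7$.

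To prove Proposition~\ref{prop:sqc1} I would apply the efficient method of Theorem~\ref{thm:main} to the extension $\cE_{c_1}$. Here $c_1 \in \Ext^{3,22}_{\cA}(\F_2,\F_2)$, so $s = 3$ and $t = 22$, and by the theorem $Sq^1(c_1)$ is the cohomology class of the cocycle $\widetilde{\Delta}_{s-1} = \widetilde{\Delta}_2 : C_4 \lra \Sigma^{22}\F_2 \otimes \Sigma^{22}\F_2 \cong \Sigma^{44}\F_2$. Rather than construct the chain homotopies $\widetilde{\Delta}_i$ for $\cE_{c_1}$ from scratch, I would exploit the observation at the head of this section that $\cE_{c_1}$ is the \emph{double} of $\cE_{c_0}$, obtained by replacing every $Sq^j$ by $Sq^{2j}$ and doubling internal degrees. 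Since $c_1 = Sq^0(c_0)$ is the $\Ext$-doubling of $c_0$ (the last entry of Proposition~\ref{prop:sqc0}), I expect the entire computation recorded in Tables~\ref{table:Dc0} and~\ref{table:Dic0} to transport to $\cE_{c_1}$: one replaces each $Sq^j$ by $Sq^{2j}$, doubles every internal degree, and relabels each source generator $s_g$ by its image under the doubling map on the minimal resolution. In particular the single value $\Delta_2(4_6) = k_3 \otimes k_3$ that yields $Sq^1(c_0) = 4_6$ should transport to $\widetilde{\Delta}_2(4_{19}) = k_3 \otimes k_3$, giving $Sq^1(c_1) = 4_{19}$ with no other contribution on $C_4$.

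The step I expect to be the main obstacle is this generator relabeling, that is, identifying how the doubling operation $Sq^0$ carries a generator of the minimal resolution of Section~\ref{sec:minimal} in internal degree $t$ to a generator in internal degree $2t$. No simple arithmetic governs the indices: comparing the two propositions gives $3_3 \mapsto 3_9$, $6_5 \mapsto 6_{17}$, $5_6 \mapsto 5_{19}$, $4_6 \mapsto 4_{19}$, and $3_9 \mapsto 3_{19}$, with subscript differences $6, 12, 13, 13, 10$ following no pattern, since the index $g$ in $s_g$ merely orders the generators of homological degree $s$ by internal degree and the count of lower generators jumps irregularly. Each relabeling must therefore be read off from the explicit resolution, and one must verify that under this transport the doubled maps $\widetilde{\Delta}_i$ still satisfy the chain-homotopy equations~(\ref{eqn:chmap}), so that Lemma~\ref{lem:main} and Theorem~\ref{thm:main} apply verbatim. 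Once the relabeling $4_6 \mapsto 4_{19}$ is confirmed and one checks that no other generator of $\Ext^{4,44}_{\cA}(\F_2,\F_2)$ receives a nonzero value under $\widetilde{\Delta}_2$, Proposition~\ref{prop:sqc1} follows, and the corollary is immediate.
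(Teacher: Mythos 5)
Your deduction of the corollary itself is exactly the paper's: Proposition~\ref{prop:sqc1} lists $Sq^1(c_1) = 4_{19}$, and the paper offers no further argument, since the proposition already names the specific dual basis element rather than a class with indeterminacy. Where you diverge is in how you would establish the proposition. The paper proves it the same way it proves Proposition~\ref{prop:sqc0}: by directly exhibiting and verifying the chain map $\Delta$ and the homotopies $\Delta_1, \Delta_2, \Delta_3$ for $\cE_{c_1}$ in Tables~\ref{table:Dc1} and~\ref{table:Dic1}, then reading off $Sq^1(c_1)$ from the unique entry $\Delta_2(4_{19}) = k_3 \otimes k_3$ in internal degree $44$ (with $4_{20}$ absent from the table). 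You instead propose to transport the $c_0$ computation through the doubling. That is a legitimate alternative, and the tables bear out that it works here -- every entry of Tables~\ref{table:Dc1} and~\ref{table:Dic1} is the entry of Tables~\ref{table:Dc0} and~\ref{table:Dic0} with $Sq^j$ replaced by $Sq^{2j}$ and the source generator relabeled -- but be aware that the transport is not merely a relabeling. The double $\Phi C$ of the minimal resolution is exact but its terms are no longer free $\cA$-modules, so the doubled maps $\Phi\Delta_i$ have the wrong source; one must interpose a comparison chain map $\lambda : C_* \lra \Phi C_*$ lifting the identity of $\F_2$ and take $\widetilde{\Delta}_i = (\Phi\Delta_i)\lambda$. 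Equations~(\ref{eqn:chmap}) are then automatic, since $\Phi$ is monoidal and commutes with $\tau$ and $\lambda$ is a chain map, so that part of your worry dissolves; but $\lambda$ need not carry generators to generators (its value on $s_g$ can involve terms $a \cdot s_{g'}$ with $a$ in the image of the doubling), and computing $\lambda$ through homological degree $4$ and internal degree $44$ is work of the same order as computing the $\widetilde{\Delta}_i$ for $\cE_{c_1}$ directly, which is presumably why the paper simply tabulates the latter. What your route buys, if carried out, is a conceptual explanation of why the two tables are formally identical, and a check on the paper's computation; what it costs is the extra comparison map, which your proposal elides by calling it a ``relabeling.''
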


\begin{proof}[Proof of Proposition~\ref{prop:sqc1}]
It suffices to record the values of $\Delta_i$, where
$\Delta_0 = \Delta$ is the chain map
$C \lra \cE_{c_1} \otimes \cE_{c_1}$ lifting the identity
map of $\F_2$, and the $\Delta_i$ for
$i>0$ satisfy equation~(\ref{eqn:chmap}).
We do this in Tables~\ref{table:Dc1}
and \ref{table:Dic1}.

\begin{longtable}{|>{$} l <{$} | >{$} l <{$} | }
\caption{Nonzero values of the diagonal map on $c_1$
\label{table:Dc1}}\\
\hline
s_g & \Delta  \\
\hline
\endfirsthead
\caption{Nonzero values of the diagonal map on $c_1$ (cont).}\\
\hline
s_g & \Delta(s_g) \\
\hline
\endhead
\hline
\endfoot

0_0 & k_0 \otimes k_0  \\
\hline
1_1 & k_0 \otimes k_1 + k_1 \otimes k_0  \\
1_2 & Sq^2 k_0 \otimes k_1   \\
\hline
2_1 & k_1 \otimes k_1   \\
2_3 & Sq^4 k_1 \otimes k_1   \\
2_5 & k_0 \otimes k_2 + k_2 \otimes k_0   \\
2_6 & Sq^4 Sq^2 k_0 \otimes k_2  \\
2_8 & k_0 \otimes Sq^8 k_2 + Sq^8 k_2 \otimes k_0  \\
2_9 & k_1 \otimes Sq^8 Sq^8 k_1 + Sq^8 Sq^8 k_1 \otimes k_1  \\
\hline
3_4 & k_1 \otimes k_2 + k_2 \otimes k_1 \\
3_7 & k_1 \otimes Sq^8 k_2 + Sq^8 k_2 \otimes k_1 + k_2 \otimes Sq^8 k_1 \\
3_9 & k_0 \otimes k_3 + k_3 \otimes k_0 \\
3_{10} & Sq^2 k_0 \otimes k_3 + k_3 \otimes Sq^2 k_0 +
      Sq^8 Sq^4 k_1 \otimes k_2 + Sq^4 k_1 \otimes Sq^8 k_2 \\
3_{14} & Sq^8 Sq^8 k_1 \otimes Sq^8 k_2 \\
\hline
4_{13} & Sq^8 Sq^4 k_1 \otimes k_3 + k_3 \otimes Sq^8 Sq^4 k_1 + 
      Sq^8 k_2 \otimes Sq^8 k_2 \\
\hline
5_{16} & Sq^8 k_2 \otimes k_3 + k_3 \otimes Sq^8 k_2 \\ 
5_{19} & Sq^4 Sq^8 k_2 \otimes k_3  \\ 
\hline
6_{17} & k_3 \otimes k_3 \\

\end{longtable}

\begin{longtable}{| >{$} l <{$} |  >{$} l <{$} |  >{$} l <{$} |  >{$} l <{$} | }
\caption{Nonzero values of the higher diagonal maps on $c_1$
\label{table:Dic1}}\\
\hline
s_g  &  \Delta_1(s_g) & \Delta_2(s_g) & \Delta_3(s_g) \\
\hline
\endfirsthead
\caption{Nonzero values of the higher diagonal maps on $c_1$ (cont).}\\
\hline
s_g &  \Delta_1(s_g) & \Delta_2(s_g) & \Delta_3(s_g) \\
\hline
\endhead
\hline
\endfoot

1_2 & k_1 \otimes k_1 & & \\
\hline
2_6 & 
      Sq^4 k_1 \otimes k_2 + k_2 \otimes Sq^4 k_1 &
      & \\
2_9 & 
      Sq^8 k_1 \otimes k_2 &
      k_2 \otimes k_2 & \\
2_{13} & 
      Sq^8 Sq^8 k_1 \otimes Sq^8 k_2 &
      Sq^8 k_2 \otimes Sq^8 k_2 & \\
\hline
3_7 &
     k_2 \otimes k_2 &
     & \\
3_{14} &
     Sq^8 k_2 \otimes Sq^8 k_2 &
     & \\
3_{16} &
     Sq^8 k_2 \otimes Sq^4 Sq^8 k_2 &
     & \\
3_{19} &
    & Sq^4 Sq^8 k_2 \otimes k_3 
    & k_3 \otimes k_3 \\
\hline
4_{19} & Sq^4 Sq^8 k_2 \otimes k_3 
       & k_3 \otimes k_3 & \\
\hline
5_{19} & k_3 \otimes k_3 & & \\

\end{longtable}
\end{proof}

\section{An extension for $f_0$ and the $Sq^i(f_0)$}

Let $\cE_{f_0}$ be the extension
\[
0 \lla \F_2 \lla N_0 \lla N_1 \lla N_2 \lla N_3 \lla \Sigma^{22} \F_2 \lla 0
\]
with 
\[
N_0 = \frac{A}{(Sq^1,Sq^2,Sq^{12}+Sq^{0,4}, Sq^{16})}[0,15]
\]
\[
N_1 = \frac{\Sigma^4 A \oplus \Sigma^8 A}
  {\left( (Sq^4,0), (Sq^{2,1}, Sq^1), (Sq^{0,2},Sq^2), (Sq^8,0), 
           (0,Sq^4 Sq^1), (Sq^{12},Sq^8) \right) } [0,17]
\]
\[
N_2 = \frac{\Sigma^5 A}{(Sq^1,Sq^6,Sq^{8}, Sq^{2,2})}[0,18]
\]
\[
N_3 = \frac{\Sigma^{10} A}{(Sq^1,Sq^2,Sq^{4})}[0,22]
\]
Here, $M[0,n]$ denotes the quotient of $M$ by classes in degrees greater than $n$.
The maps in the extension are
\begin{itemize}
\item $d(k_1) = Sq^4 k_0$
\item $d(k_1') = Sq^8 k_0$
\item $d(k_2) = Sq^1 k_1$
\item $d(k_3) = Sq^5 k_2$
\item $d(k_4) = Sq^{12} k_3$
\end{itemize}

\begin{proposition}
The extension $\cE_{f_0}$  represents the cocycle $f_0 = 4_6 \in \Ext_{\cA}^{4,22}(\F_2,\F_2)$.
\end{proposition}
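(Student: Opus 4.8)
The plan is to mirror the proofs of the corresponding statements for $c_0$ and $c_1$: I will exhibit an explicit $\cA$-linear chain map $c : C_* \lra \cE_{f_0}$ from the minimal resolution of Section~\ref{sec:minimal} to the extension, lifting the identity of $\F_2$. Under the identification of Yoneda $\Ext$ with the cohomology of $\Hom_{\cA}(C_*,\F_2)$, the class of $\cE_{f_0}$ is represented by the cocycle $c_4 : C_4 \lra \Sigma^{22}\F_2$ given by the component of $c$ in homological degree $4$; so it suffices to construct $c$ and to check that $c_4$ is dual to the generator $4_6$.

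First I would confirm that $\cE_{f_0}$ is genuinely a $4$-fold extension of $\F_2$ by $\Sigma^{22}\F_2$, i.e. that the sequence
\[
0 \lla \F_2 \lla N_0 \lla N_1 \lla N_2 \lla N_3 \lla \Sigma^{22}\F_2 \lla 0
\]
with the stated differentials is exact. Because each $N_i$ is presented explicitly by generators and relations, truncated above by the $[0,n]$ operation, and each differential is determined by its value on a single generator, this reduces to a finite degree-by-degree dimension comparison together with a check that consecutive differentials compose to zero. This is routine but must be carried out before the chain map can be defined.

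The substance of the proof is the construction of $c$. I would define $c$ on the $\cA$-generators of the minimal resolution, setting $c(0_0) = k_0$, sending the internal-degree-$4$ and internal-degree-$8$ generators in homological degree $1$ to $k_1$ and $k_1'$ respectively, the internal-degree-$5$ generator in homological degree $2$ to $k_2$, the internal-degree-$10$ generator in homological degree $3$ to $k_3$, and finally $c(4_6) = k_4$, with the remaining generators sent to $0$ or to their forced values. The defining differentials $d(k_1) = Sq^4 k_0$, $d(k_1') = Sq^8 k_0$, $d(k_2) = Sq^1 k_1$, $d(k_3) = Sq^5 k_2$, $d(k_4) = Sq^{12} k_3$ are arranged to match the leading Steenrod terms of the minimal resolution's differential in the relevant bidegrees, which is exactly what makes these assignments internal-degree preserving and consistent, and allows $c$ to be extended $\cA$-linearly.

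The main obstacle is verifying the chain-map identity $d\,c = c\,d$ on every generator of $C_*$. Two features make this delicate. First, each $N_i$ is a quotient of a free $\cA$-module by several relations, and $N_1$ is built on the two generators $k_1, k_1'$, so one must confirm that the chosen images genuinely respect these relations and that the truncations $[0,n]$ introduce no inconsistency in the degrees that actually occur. Second, one must check compatibility with the \emph{full} differential of the minimal resolution, not merely its leading terms, which requires the relevant entries of $d$ in homological degrees up to $4$ and a cancellation check in each internal degree. Once $d\,c = c\,d$ is established, the top component $c_4$ is forced to evaluate to $1$ on $4_6$; a final check that $c_4$ evaluates to $0$ on the other internal-degree-$22$ generator $4_7$ then identifies the represented class as $4_6$ rather than $4_6 + 4_7$, completing the proof.
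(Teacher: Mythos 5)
Your proposal is correct and follows essentially the same route as the paper: the paper's proof consists precisely of exhibiting the chain map $f : C_* \lra \cE_{f_0}$ with $f(0_0)=k_0$, $f(1_2)=k_1$, $f(1_3)=k_1'$, $f(2_2)=k_2$, $f(2_4)=Sq^4k_2$, $f(3_2)=k_3$, $f(4_6)=k_4$, which is exactly the assignment you describe (your ``forced value'' on the second degree-$9$ generator being $Sq^4k_2$), and the identification of the class as $4_6$ rather than $4_6+4_7$ follows as you say from $f(4_7)=0$.
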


\begin{proof}
It suffices to exhibit the following chain map $f:C_* \lra \cE_{f_0}$
from the minimal resolution in Section~\ref{sec:minimal}
to $\cE_{f_0}$.  The nonzero values of $f$ are
\begin{itemize}
\item[0:] $f(0_0) = k_0$
\item[1:] $f(1_2) = k_1$
\item[\phantom{1:}] $f(1_3) = k_1'$
\item[2:] $f(2_2) = k_2$ 
\item[\phantom{2:}] $f(2_4) = Sq^4 k_2$
\item[3:] $f(3_2) = k_3$
\item[4:] $f(4_6) = k_4$
\end{itemize}
\end{proof}

\begin{proposition}
\label{prop:sqf0}
The squaring operations on $f_0$ are
\[
Sq^*(f_0) = (0, 7_{13}+7_{14}, 6_{16}, 0, 4_{19}) = 
(0, h_3 r_0, y_0, 0, f_1).
\]
\end{proposition}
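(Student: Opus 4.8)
The plan is to apply Theorem~\ref{thm:main} to the extension $\cE_{f_0}$, following the template of Proposition~\ref{prop:sqc0}. Here $s=4$ and $t=22$, so for $0\le i\le 4$ the class $Sq^i(f_0)$ is represented by the cocycle $\widetilde\Delta_{4-i}$ restricted to $C_{4+i}$. Because each copy of $\cE_{f_0}$ has length four, with top cell $\Sigma^{22}\F_2$ generated by $k_4$, the homological degree $8$ part of $N_*\otimes N_*$ is exactly $\Sigma^{22}\F_2\otimes\Sigma^{22}\F_2\cong\Sigma^{44}\F_2$, generated by $k_4\otimes k_4$. Thus the projection onto the top cell in Theorem~\ref{thm:main} is automatic: $Sq^i(f_0)$ is simply the $k_4\otimes k_4$-component of $\widetilde\Delta_{4-i}$ evaluated on the generators of $C_{4+i}$ in internal degree $44$. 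Since $C_*$ is the minimal resolution, $\Hom_{\cA}(C_*,\F_2)$ has zero differential, so each of these cochains is automatically a cocycle and represents the zero class in $\Ext$ only if it is identically zero; reading off $Sq^i(f_0)$ therefore amounts to recording which degree-$44$ generators of $C_{4+i}$ are hit.

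First I would build $\widetilde\Delta_0=\Delta$, the $\cA$-linear diagonal approximation $C_*\lra N_*\otimes N_*$ lifting $\F_2\lra\F_2\otimes\F_2$, starting from $\Delta(0_0)=k_0\otimes k_0$ and lifting inductively up the resolution, with the low-dimensional values pinned down by the chain map $f:C_*\lra\cE_{f_0}$ of the preceding proposition. Then, for $i=1,2,3,4$ in turn, I would solve equation~(\ref{eqn:chmap}) to produce the chain homotopies $\widetilde\Delta_i:\widetilde\Delta_{i-1}\simeq\tau\widetilde\Delta_{i-1}$, carrying each computation up through $C_{4+i}$ and into internal degree $44$. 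Exactly as in the $c_0$ and $c_1$ cases, I would present the result as two tables: the nonzero values of $\Delta$, and the nonzero values of the higher $\widetilde\Delta_i$. Extracting the $k_4\otimes k_4$ summand then yields the five cocycles, which I expect to be: zero on $C_8$; the cocycle $7_{13}+7_{14}$ on $C_7$; the cocycle $6_{16}$ on $C_6$; zero on $C_5$; and the cocycle $4_{19}$ on $C_4$.

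It then remains to identify these cocycles with their multiplicative names. The bottom operation $Sq^0$ is the doubling (Frobenius) operation, so $Sq^0(f_0)=f_1$, consistent with the identification $f_1=4_{19}$ obtained in the previous section; the top operation is $Sq^4(f_0)=f_0^2$, whose vanishing serves as a useful consistency check. The two genuinely computational values $6_{16}=y_0$ and $7_{13}+7_{14}=h_3r_0$ are then read off from the known names of the generators in these bidegrees.

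The hard part will be the bookkeeping for the higher homotopies, especially $\widetilde\Delta_3$ and $\widetilde\Delta_4$, in internal degree $44$. The modules $N_i$ are presented by generators, relations, and truncations $[0,n]$, so forming $\Delta$ and then successively solving equation~(\ref{eqn:chmap}) over the tensor squares $N_j\otimes N_{j'}$ requires careful tracking of which $Sq$-monomials survive the relations. I expect the two vanishing statements, $Sq^1(f_0)=0$ and $Sq^4(f_0)=0$, to be the most delicate: each asserts that the $k_4\otimes k_4$-component of a homotopy vanishes on \emph{every} degree-$44$ generator of $C_5$, respectively $C_8$, so the computation must be pushed all the way through rather than stopped at the first surviving term.
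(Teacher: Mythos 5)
Your proposal is correct and follows exactly the paper's route: the paper's proof likewise constructs the lifted diagonal $\Delta_0:C_*\lra\cE_{f_0}\otimes\cE_{f_0}$ and the successive homotopies $\Delta_i$ satisfying equation~(\ref{eqn:chmap}), records them in Tables~\ref{table:Df0} and \ref{table:Dif0}, and reads off $Sq^i(f_0)$ as the $k_4\otimes k_4$-component of $\Delta_{4-i}$ on the degree-$44$ generators of $C_{4+i}$. Your observations that minimality of $C_*$ makes the resulting cochains automatically cocycles, and that the two vanishing cases require carrying the computation through all generators, are correct and implicit in the paper's tabulated data.
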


\begin{proof}[Proof of Proposition~\ref{prop:sqf0}]
It suffices to record the values of $\Delta_i$, where $\Delta_0 = \Delta$ is the chain map
$C \lra \cE_{f_0} \otimes \cE_{f_0}$ lifting the identity map of $\F_2$, and the $\Delta_i$ for
$i>0$ satisfy equation~(\ref{eqn:chmap}).
We do this in Tables~\ref{table:Df0}
and \ref{table:Dif0}.

\begin{longtable}{|>{$} l <{$} | >{$} l <{$} | }
\caption{Nonzero values of the diagonal map on $f_0$
\label{table:Df0}}\\
\hline
s_g & \Delta  \\
\hline
\endfirsthead
\caption{Nonzero values of the diagonal map on $f_0$ (cont).}\\
\hline
s_g & \Delta(s_g) \\
\hline
\endhead
\hline
\endfoot

0_0 & k_0 \otimes k_0  \\
\hline
1_2 & k_0 \otimes k_1 + k_1 \otimes k_0  \\
1_3 & k_0 \otimes k_1' + k_1' \otimes k_0 + Sq^4 k_0 \otimes k_1  \\
1_4 & Sq^4 k_0 \otimes Sq^4 k_1' +
      Sq^4 k_1' \otimes Sq^4 k_0 + \\
    & Sq^6 k_0 \otimes Sq^2 k_1' +
      Sq^2 k_1' \otimes Sq^6 k_0 + \\
    & Sq^8 k_0 \otimes k_1' \\
\hline
2_2 & k_0 \otimes k_2 + k_2 \otimes k_0   \\
2_3 & k_1 \otimes k_1   \\
2_4 & k_0 \otimes Sq^4 k_2 +
   Sq^4 k_2 \otimes k_0   +
   Sq^4 k_0 \otimes k_2 
   \\
2_5 & k_1 \otimes Sq^2 k_1  \\
2_6 & k_1 \otimes Sq^4 k_1' +
         Sq^4 k_1' \otimes k_1 + \\
    & Sq^1 k_1 \otimes Sq^3 k_1' +
         Sq^3 k_1' \otimes Sq^1 k_1 + \\
    & Sq^2 k_1 \otimes Sq^2 k_1' +
      Sq^2 k_1' \otimes Sq^2 k_1 + \\
    & Sq^3 k_1 \otimes Sq^1 k_1' + 
       Sq^1 k_1' \otimes Sq^3 k_1 +
       k_1' \otimes k_1'  \\
\pagebreak
2_7 & k_1 \otimes Sq^5 k_1' +
        + Sq^5 k_1' \otimes k_1 + \\
    & Sq^1 k_1 \otimes Sq^4 k_1' +
          Sq^4 k_1' \otimes Sq^1 k_1 + \\
    & Sq^2 k_1 \otimes Sq^3 k_1' +
       Sq^3 k_1' \otimes Sq^2 k_1 + \\
    & Sq^3 k_1 \otimes Sq^2 k_1' + 
       Sq^2 k_1' \otimes Sq^3 k_1 +  k_1' \otimes Sq^1 k_1' \\
2_8 & k_1 \otimes Sq^6 k_1' +
      Sq^6 k_1' \otimes k_1 \\
    & Sq^1 k_1 \otimes Sq^5 k_1' +
      Sq^5 k_1' \otimes Sq^1 k_1 + \\
    & Sq^2 k_1 \otimes Sq^4 k_1' + 
           Sq^4 k_1' \otimes Sq^2 k_1  + \\
    & Sq^{(0,1)} k_1 \otimes Sq^3 k_1' +
      Sq^3 k_1' \otimes Sq^{(0,1)} k_1  + \\
    & Sq^3 k_1 \otimes Sq^4 Sq^2 Sq^1 k_1 +
      Sq^4Sq^2Sq^1 k_1 \otimes Sq^3 k_1 + \\
    & Sq^2 k_1' \otimes k_1' \\
2_9 & k_1' \otimes Sq^4 k_1' + Sq^1 k_1' \otimes Sq^3 k_1' + \\
    & Sq^5 k_1' \otimes Sq^3 k_1 + Sq^2Sq^1 k_1 \otimes Sq^5 k_1'\\
\hline
3_2 & k_0 \otimes k_3 + k_3 \otimes k_0 \\
3_3 & k_1 \otimes Sq^2 k_2 +
      Sq^2 k_2 \otimes k_1 +
      Sq^2 k_1 \otimes k_2 +
      k_2 \otimes Sq^2 k_1 \\
3_4 & Sq^3 k_1 \otimes k_2 + k_2 \otimes Sq^3 k_1 \\
3_5 & k_1' \otimes Sq^4 k_2 +
      Sq^4 k_2 \otimes k_1' + \\
    & Sq^1 k_1' \otimes Sq^3 k_2 +
      Sq^2 k_1' \otimes Sq^2 k_2 + \\
    & Sq^4 k_1' \otimes k_2 + 
      k_2 \otimes Sq^4 k_1' + \\
    & Sq^4 Sq^2 k_2 \otimes Sq^2 k_1 \\
3_6 & Sq^1 k_1' \otimes Sq^4 k_2 \\
3_7 & Sq^1 k_1' \otimes Sq^4 Sq^2 k_2 + 
      Sq^4 Sq^2 k_2 \otimes Sq^1 k_1' + \\
    & Sq^2 k_1' \otimes Sq^5 k_2 +
      Sq^5 k_2 \otimes Sq^2 k_1' + \\
    & Sq^3 k_1' \otimes Sq^4 k_2 +
      Sq^4 k_2 \otimes Sq^3 k_1'  \\
3_8 & k_1 \otimes Sq^8 Sq^4 k_2 + 
      Sq^4 Sq^2 k_2 \otimes Sq^2 k_1' + \\
    & Sq^3 k_1 \otimes Sq^6 Sq^3 k_2 +
      Sq^6 Sq^3 k_2 \otimes Sq^3 k_1 + \\
    & Sq^3 k_1' \otimes Sq^5 k_2 +
      Sq^5 k_2 \otimes Sq^3 k_1' + \\
    & Sq^5 k_1' \otimes Sq^3 k_2 +
      Sq^3 k_2 \otimes Sq^5 k_1' \\
\pagebreak
3_9 & k_1 \otimes Sq^9 Sq^4 k_2 +
      Sq^9 Sq^4 k_2 \otimes k_1 + \\
    & Sq^1 k_1 \otimes Sq^8 Sq^4 k_2 +
      Sq^8 Sq^4 k_2 \otimes Sq^1 k_1 + \\
    & (Sq^3 Sq^1 k_1 + k_1') \otimes Sq^6 Sq^3 k_2 +
      Sq^6 Sq^3 k_2 \otimes (Sq^3 Sq^1 k_1  + k_1')+ \\
    & Sq^4 Sq^2 Sq^1 k_1 \otimes Sq^4 Sq^2 k_2 + \\
    & Sq^2 k_1' \otimes Sq^5 Sq^2 k_2 +
      Sq^5 Sq^2 k_2 \otimes Sq^2 k_1' + \\
    & Sq^7 k_1' \otimes Sq^2 k_2 +
      Sq^2 k_2 \otimes Sq^7 k_1'  \\
3_{10} & Sq^3 k_1 \otimes Sq^8 Sq^4 k_2 +
       Sq^8 Sq^4 k_2 \otimes Sq^3 k_1 + \\
     & Sq^8 Sq^4 Sq^1 k_1 \otimes Sq^2 k_2 +
       Sq^5 Sq^2 Sq^1 k_1 \otimes Sq^5 Sq^2 k_2 + \\
     & Sq^2 k_1' \otimes Sq^6 Sq^3 k_2 +
       Sq^5 k_1' \otimes Sq^4 Sq^2 k_2 + \\
     & Sq^6 k_1' \otimes Sq^5 k_2 +
       Sq^5 k_2 \otimes Sq^6 k_1' + \\
     & Sq^7 k_1' \otimes Sq^4 k_2 +
       Sq^4 k_2 \otimes Sq^7 k_1' + \\
     & Sq^9 Sq^4 k_2 \otimes Sq^2 k_1 \\
\hline
4_2 & k_2 \otimes Sq^3 k_2 + Sq^3 k_2 \otimes k_2 \\ 
4_3 & Sq^2 k_2 \otimes Sq^4 Sq^2 k_2 + Sq^4 Sq^2 k_2 \otimes Sq^2 k_2 + \\
    & Sq^3 k_2 \otimes Sq^5 k_2 + \\
    & Sq^4 k_2 \otimes Sq^4 k_2  \\ 
4_4 & Sq^4 k_2 \otimes Sq^5 k_2 \\
4_5 & Sq^3 k_1' \otimes k_3 + 
      k_3 \otimes Sq^3 k_1' + \\ 
    & Sq^4 Sq^2 k_2 \otimes Sq^5 k_2 \\
4_6 & k_0 \otimes k_4 + k_4 \otimes k_0 + \\
    & k_1 \otimes Sq^8 k_3 +
      Sq^8 k_3 \otimes k_1 + \\
    & Sq^4 k_1' \otimes k_3 +
      k_3 \otimes Sq^4 k_1' + \\
    & Sq^4 Sq^2 k_2 \otimes Sq^4 Sq^2 k_2 \\
4_7 & k_1 \otimes Sq^8 k_3 +
      Sq^8 k_3 \otimes k_1 + \\
    & k_2 \otimes Sq^8 Sq^4 k_2 + \\
    & Sq^4 Sq^2 k_2 \otimes Sq^4 Sq^2 k_2 + \\
    & Sq^6 Sq^3 k_2 \otimes Sq^3 k_2  \\
4_8 & Sq^2 k_1 \otimes Sq^8 k_3 +
      Sq^8 k_3 \otimes Sq^2 k_1 + \\
    & Sq^6 k_1' \otimes k_3 + 
      k_3 \otimes Sq^6 k_1' + \\
    & Sq^5 Sq^2 k_2 \otimes Sq^5 Sq^2 k_2 + \\
    & Sq^6 Sq^3 k_2 \otimes Sq^5 k_2 +
      Sq^5 k_2 \otimes Sq^6 Sq^3 k_2 \\
\pagebreak
4_9 & k_1 \otimes Sq^4 Sq^8 k_3 +
      Sq^4 Sq^8 k_3 \otimes k_1 + \\
    & Sq^3 k_2 \otimes Sq^9 Sq^4 k_2 +
      Sq^9 Sq^4 k_2 \otimes Sq^3 k_2 + \\
    & Sq^4 k_2 \otimes Sq^8 Sq^4 k_2 \\
4_{10} & Sq^5 k_2 \otimes Sq^8 Sq^4 k_2 \\
4_{11} & Sq^4 k_1' \otimes Sq^4 Sq^8 k_3 +
         Sq^4 Sq^8 k_3 \otimes Sq^4 k_1' + \\
       & Sq^8 Sq^4 k_2 \otimes Sq^8 Sq^4 k_2 \\
4_{12} & Sq^8 Sq^4 k_2 \otimes Sq^9 Sq^4 k_2 \\
4_{14} & Sq^{15} k_0 \otimes k_4 + k_4 \otimes Sq^{15} k_0 \\
\hline
5_5 & Sq^5 k_2 \otimes k_3  \\ 
5_7 & k_2 \otimes  Sq^8 k_3 + Sq^8 k_3 \otimes k_2 \\ 
5_9 & k_1 \otimes k_4 + k_4 \otimes k_1 + \\
    & Sq^3 k_2 \otimes Sq^8 k_3 + Sq^8 k_3 \otimes Sq^3 k_2 \\
5_{10} & Sq^2 k_1 \otimes k_4 + k_4 \otimes Sq^2 k_1 + \\
    & Sq^5 k_2 \otimes Sq^8 k_3 + Sq^8 k_3 \otimes Sq^5 k_2 \\
5_{11} & Sq^3 k_1 \otimes k_4 + k_4 \otimes Sq^3 k_1 + \\
    & Sq^4 Sq^2 k_2 \otimes Sq^8 k_3 + Sq^8 k_3 \otimes Sq^4 Sq^2 k_2 \\
5_{12} & Sq^8 Sq^4 k_2 \otimes Sq^8 k_3 + Sq^8 k_3 \otimes Sq^8 Sq^4 k_2 \\
5_{13} & Sq^6 k_1' \otimes k_4 + k_4 \otimes Sq^6 k_1' + \\
    & Sq^6 Sq^3 k_2 \otimes Sq^4 Sq^8 k_3 + Sq^4 Sq^8 k_3 \otimes Sq^6 Sq^3 k_2 \\
5_{14} & Sq^9 Sq^4 k_2 \otimes Sq^8 k_3 \\
\hline
6_8 & Sq^2 k_2 \otimes k_4 + k_4 \otimes Sq^2 k_2 \\
6_9 & Sq^4 Sq^8 k_3 \otimes k_3 \\
6_{10} & Sq^6 Sq^3 k_2 \otimes k_4 + k_4 \otimes Sq^6 Sq^3 k_2 + \\
       & Sq^8 k_3 \otimes Sq^8 k_3 \\
\hline
7_{11} & Sq^8 k_3 \otimes k_4 +  k_4 \otimes Sq^8 k_3 \\
7_{13} & Sq^4 Sq^8 k_3 \otimes k_4 \\
7_{14} & Sq^4 Sq^8 k_3 \otimes k_4 \\
\end{longtable}

\pagebreak

\begin{longtable}{| >{$} l <{$} |  >{$} l <{$} |  >{$} l <{$} |  >{$} l <{$} |  >{$} l <{$} | }
\caption{Nonzero values of the higher diagonal maps on $f_0$
\label{table:Dif0}}\\
\hline
s_g  &  \Delta_1(s_g) & \Delta_2(s_g) & \Delta_3(s_g) & \Delta_4(s_g) \\
\hline
\endfirsthead
\caption{Nonzero values of the higher diagonal maps on $f_0$ (cont).}\\
\hline
s_g  &  \Delta_1(s_g) & \Delta_2(s_g) & \Delta_3(s_g) & \Delta_4(s_g) \\
\hline
\endhead
\hline
\endfoot

1_3 & k_1 \otimes k_1 & & & \\
1_4 & k_1' \otimes k_1' & & & \\
\hline
2_4 & 
       k_1 \otimes k_2 + k_2 \otimes k_1 & & & \\
2_5 & 
      Sq^1 k_1 \otimes k_2 & k_2 \otimes k_2 & & \\
2_8 & 
      Sq^1 k_1' \otimes Sq^4 k_2 & Sq^4 k_2 \otimes Sq^4 k_2 & & \\
\hline
3_2 &
     k_2 \otimes k_2 &
     & & \\
3_4 &
     Sq^2 k_2 \otimes k_2 &
     & & \\
3_5 &
     Sq^2 k_2 \otimes Sq^5 k_2 +  & & & \\
     &
     Sq^5 k_2 \otimes Sq^2 k_2 +  & & & \\
     &
     Sq^5 Sq^2 k_2 \otimes k_2  &
     & & \\
3_6 &
     Sq^4 k_2 \otimes Sq^4 k_2 &
     & & \\
3_7 & 0 &
     Sq	^5 k_2 \otimes k_3
     & k_3 \otimes k_3 & \\
3_8 &
     Sq^6 Sq^3 k_2 \otimes Sq^2 k_2 + & & & \\
     &
     Sq^5 Sq^2 k_2 \otimes Sq^4 k_2  &
     & & \\
3_9 &
     Sq^4 Sq^2 k_2 \otimes Sq^4 Sq^2 k_2 &
    &  & \\
3_{10} &
     Sq^8 Sq^4 k_2 \otimes Sq^2 k_2 + & & & \\
     &
     Sq^2 k_2 \otimes Sq^8 Sq^4 k_2 + 
       & & & \\
       &
     Sq^5 Sq^2 k_2 \otimes Sq^5 Sq^2 k_2 + & & & \\
     &
     Sq^6 Sq^3 k_2 \otimes Sq^5 k_2 &
     & & \\
\hline
4_{19} &  &
    & Sq^4 Sq^8 k_3 \otimes k_4 & k_4 \otimes k_4 \\
    
\hline
5_5 & k_3 \otimes k_3 & & & \\
5_{14} & Sq^8 k_3 \otimes Sq^8 k_3 & & & \\
\hline
6_{16} &
      Sq^4 Sq^8 k_3 \otimes k_4
      & k_4 \otimes k_4 & & \\
\hline
7_{11} &
      & & & \\
7_{13} &
      k_4 \otimes k_4
      & & & \\
7_{14} &
      k_4 \otimes k_4
      & & & \\

\end{longtable}
\end{proof}

\section{A  practical way to find small extensions}
\label{sect:practical}

Here is a systematic way of finding a small extension $\cE_x$.   
Given any extension $\cE_x$, we can factor $\cE_x$ into short exact sequences
\[
 0 \lla \F_2 \llla{p_0} M_0 \lla M_1' \lla 0
\]
\[
 0 \lla M_i' \llla{p_i} M_i \lla M_{i+1}' \lla 0
\]
\[
 0 \lla M_{s-1}' \llla{p_{s-1}} M_{s-1} \lla \Sigma^t \F_2 \lla 0.
\]
In the long exact sequences these induce, we have
\[
\xymatrix{
\Ext_\cA^{s,t}(M_0,\F_2)
&
\Ext_\cA^{s-1,t}(M_1,\F_2)
&
\cdots
&
\Ext_\cA^{1,t}(M_{s-1},\F_2)
&
\\
\Ext_\cA^{s,t}(\F_2,\F_2)
\ar_{p_0^*}[u]
&
\Ext_\cA^{s-1,t}(M_1',\F_2)
\ar_(0.55){\partial}[l]
\ar_{p_1^*}[u]
&
\cdots
\ar_(0.3){\partial}[l]
&
\Ext_\cA^{1,t}(M_{s-1}',\F_2)
\ar_(0.7){\partial}[l]
\ar_{p_{s-1}^*}[u]
&
\Ext_\cA^{0,t}(\Sigma^t \F_2,\F_2)
\ar_(0.5){\partial}[l]
\\
}
\]

The cocycle $x \in \Ext_\cA^{s,t}(\F_2,\F_2)$ satisfies
\[
x = \partial(y_1) = \partial\partial(y_2) = \cdots = \partial^{s-1}(y_{s-1})
= \partial^{s}(1_{\Sigma^t \F_2})
\]
for cocycles $y_i \in \Ext_\cA^{s-i,t}(M_i',\F_2)$.   

This has the virtue of simplifying our task dramatically.  Rather than
needing to find all the $M_i$  realizing $x$ at once, we can produce them
one at a time, in order.

To find $M_0$, it suffices to find any epimorphism  $p_0$ such that $p_0^*(x) = 0$.
We then let $M_1' = \ker(p_0)$ and choose any lift $y_1 \in \Ext_\cA(M_1',\F_2)$
of $x$.  We then repeat the process inductively: choose an epimorphism  $p_1$ such that
$p_1^*(y_1) = 0$, set $M_2' = \ker(p_1)$, and choose a lift $y_2$ of $y_1$, etcetera.

Further, we can search for such a $p_i$ systematically, since the natural map
$\cA \otimes M_i' \lra M_i'$ will certainly work, and we can usually find a
small subquotient of $\cA \otimes M_i'$ which maps onto $M_i'$ and
has $p_i^*(y_i) = 0$.

\section{An extension for $e_0$ and the $Sq^i(e_0)$}
\label{sect:e0}

We will work through the method of the preceding section to obtain an extension
realizing $e_0 = 4_5 \in \Ext_\cA^{4,21}(\F_2,\F_2)$.  

We start by observing that we may choose $M_0$ to be the
(desuspension of) the subquotient of $F_2[x] = H^* \RP^\infty$ spanned
by $x^i$ for $i=1,2,4,8,16$.    Computing the image of $e_0$ under the projection
$M_0 \lra \F_2$  is the same as computing the action of $\Ext_\cA(\F_2,\F_2)$
on the cocycle $0_0$ generating $\Ext_\cA^0(M_0,\F_2)$, and this is easily
checked to be $0$ since $\Ext_\cA^{4,21}(M_0,\F_2) = 0$.   
See Figure~\ref{chart:M0}.

\begin{figure}
{\fbox{\includegraphics[scale=0.5,trim=2cm 1cm 1cm 1cm, clip ]{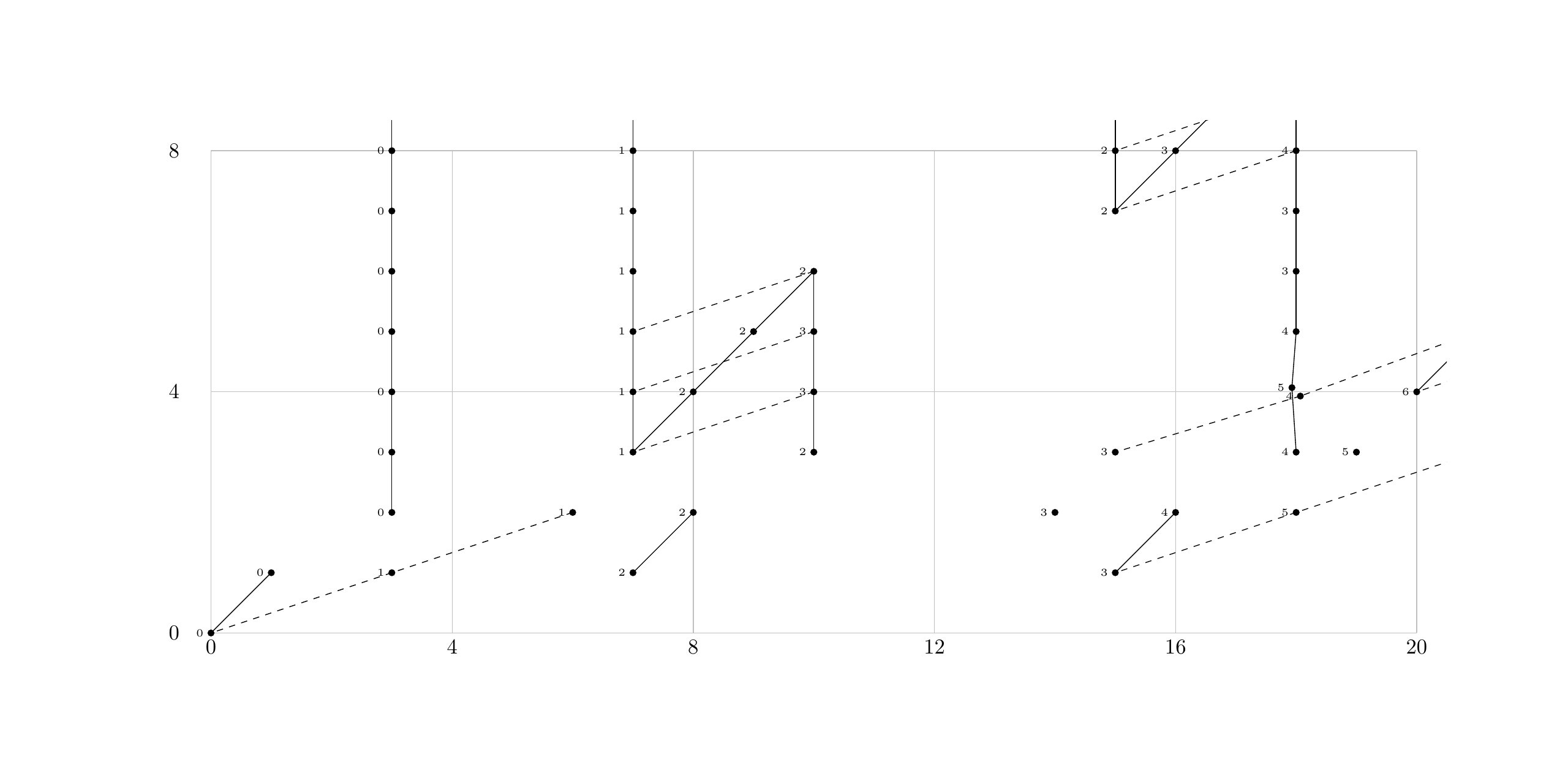}}}
\caption{$\Ext_\cA(M_0,\F_2)$}
\label{chart:M0}
\end{figure}

The kernel $M_1'$ is the submodule of $M_0$
spanned by $x^i$ for $i=2,4,8,16$.
We find $\Ext_\cA^{3,21}(M_1',\F_2) = \< 3_9, 3_{10} \>$ with
$\partial(3_9) = e_0$ and $\partial(3_{10}) = 0$.  We choose $y_1 = 3_9$.
This completes the step involving $\F_2 \lla M_0 \lla M_1'$.
See Figure~\ref{chart:M1p}.

\begin{figure}
{\fbox{\includegraphics[scale=0.5,trim=2cm 1cm 1cm 1cm, clip ]{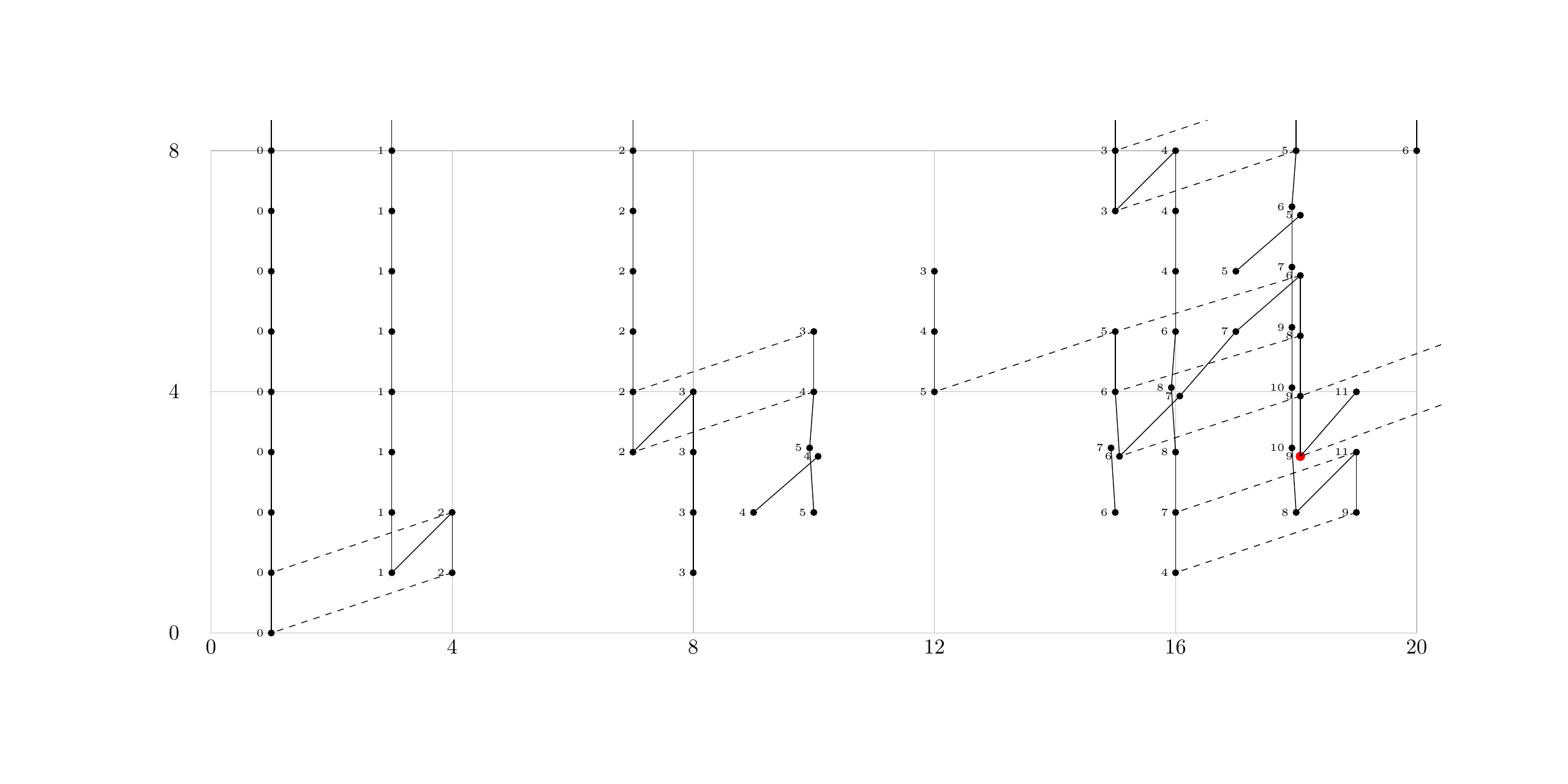}}}
\caption{$\Ext_\cA(M_1',\F_2)$}
\label{chart:M1p}
\end{figure}

Since $M_1'$ is concentrated in odd degrees, its $\cA$ action factors through
the even degree quotient, $\cA \lra \Phi\cA$.   A small piece of this will suffice.
Let $\Phi\cA(1)$ be the double of $\cA(1)$ with $\cA$-action in which $Sq^8$
is nonzero only on the class in degree $2$.  We can compute that 
the map in $\Ext$ induced by the tensor product of $M_1'$ with the
quotient map $\Phi\cA(1) \lra F_2$ sends $3_9 \in \Ext_\cA(M_1',\F_2)$
to 0 in $\Ext_\cA(\Phi\cA(1) \otimes M_1',\F_2)$.  The submodule  of
$\Phi\cA(1) \otimes M_1'$ generated by the bottom class, truncated above degree $21$
is $20$ dimensional over $\F_2$, and could be used as $M_1$.   However, studying
the kernel of the map $M_1 \lra M_1'$, we find that the following subquotient,
which is only $10$ dimensional over $\F_2$ suffices:
\[
M_1 = \frac{\Sigma \cA}{(Sq^1, Sq^{(0,1)},Sq^{(0,0,1)},Sq^8,Sq^8Sq^4, Sq^{16})}[1,17].
\]
This is the truncation above degree 17 of the displayed cyclic module.
A chart of $\Ext_\cA(M_1,\F_2)$ is shown in Figure~\ref{chart:M1}.
Since $h_0\cdot 3_9 = h_2 \cdot 3_6$ in $\Ext_\cA^{4,22}(M_1',\F_2)$,
and no $h_0$-multiple in
$\Ext_\cA^{4,22}(M_1,\F_2)$ is also an $h_2$-multiple, the map in $\Ext$
induced by the evident
epimorphism $M_1 \lra M_1'$ must send $3_9$ to $0$, and
is therefore suitable for our purpose.

\begin{figure}
{\fbox{\includegraphics[scale=0.5,trim=2cm 1cm 1cm 1cm, clip ]{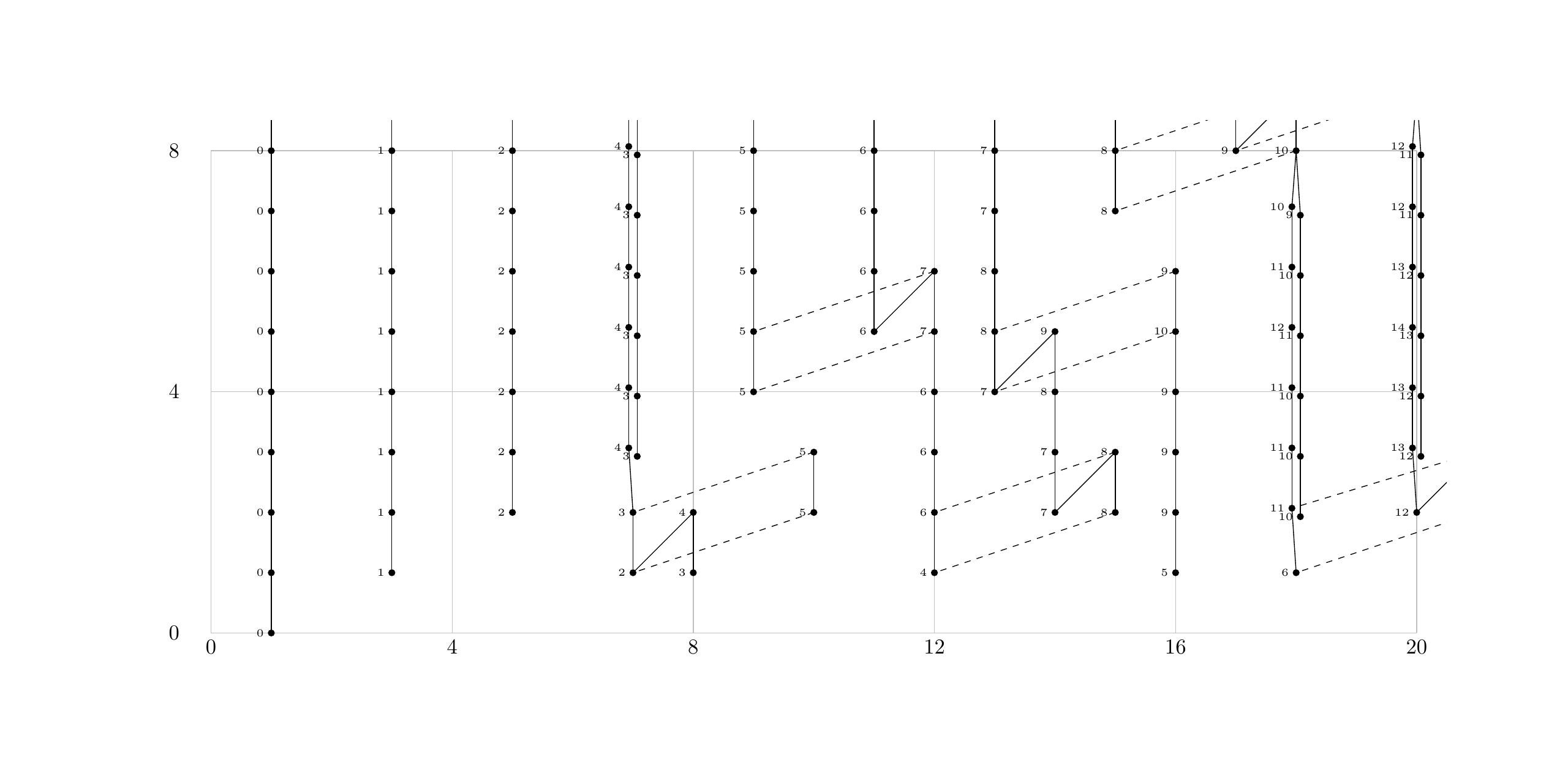}}}
\caption{$\Ext_\cA(M_1,\F_2)$}
\label{chart:M1}
\end{figure}

We then let $M_2' = \ker(M_1 \lra M_1')$.   It is $6$-dimensional over
$\F_2$ and its $\Ext$ chart is shown in Figure~\ref{chart:M2p}.
Since $3_9$ went to $0$ in $\Ext_\cA(M_1,\F_2)$, it must be in the
image of the boundary map from $\Ext_\cA^{2,21}(M_2',\F_2)$, and the
only possibility is that $\partial(2_8) = 3_9$.    Explicit calculation
of the chain map lifting the $1$-cocycle $D_1 \lra M_2'$, using {\tt ext},
where $D_1 \lra D_0 \lra M_1'$ is the start of a resolution of $M_1'$,
verifies this.

A bit of work with {\tt sage} code verifies that
\[
M_2' = \frac{\Sigma^5 \cA}{(Sq^1, Sq^{(0,1)},Sq^6,Sq^8)}[5,17].
\]
We `relax' these relations slightly to find $M_2$, in particular replacing
the relation $Sq^6$ by $Sq^{(0,0,1)}$.   The result is a bit larger than necessary:  adding the
relation $Sq^{0,0,2}$ and truncating above degree $19$ gives a module 
\[
M_2 = \frac{\Sigma^5 \cA}{(Sq^1, Sq^{(0,1)},Sq^{(0,0,1)},Sq^8,Sq^{(0,0,2)})}[5,19]
\]
which is $10$-dimensional over $\F_2$ and surjects to $M_2'$, inducing a map which
sends $2_8$ to 0 since $\Ext_\cA^{(2,21)}(M_2,\F_2) = 0$.   See Figure~\ref{chart:M2}.

\begin{figure}
{\fbox{\includegraphics[scale=0.5,trim=2cm 1cm 1cm 1cm, clip ]{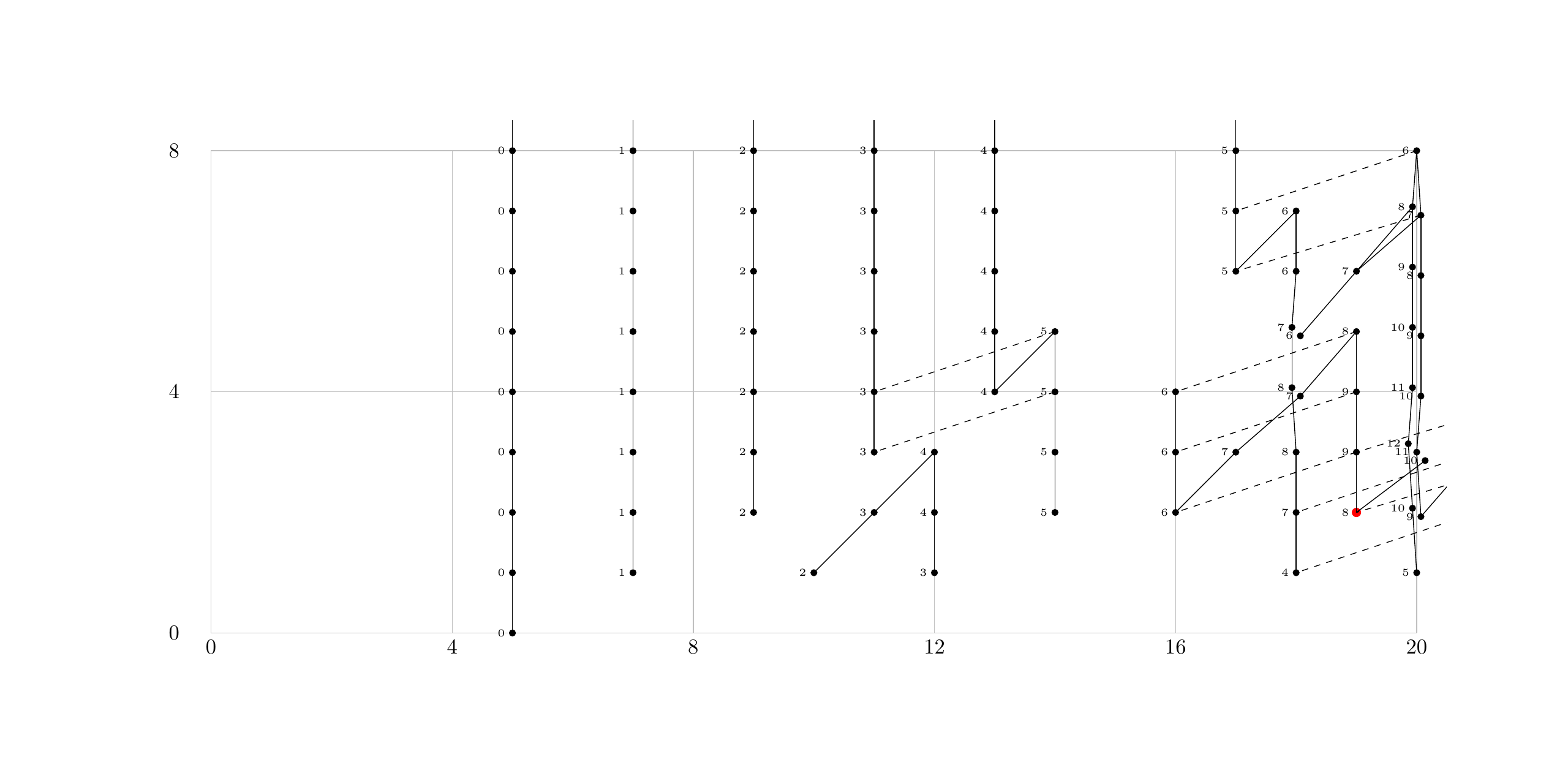}}}
\caption{$\Ext_\cA(M_2',\F_2)$}
\label{chart:M2p}
\end{figure}

\begin{figure}
{\fbox{\includegraphics[scale=0.5,trim=2cm 1cm 1cm 1cm, clip ]{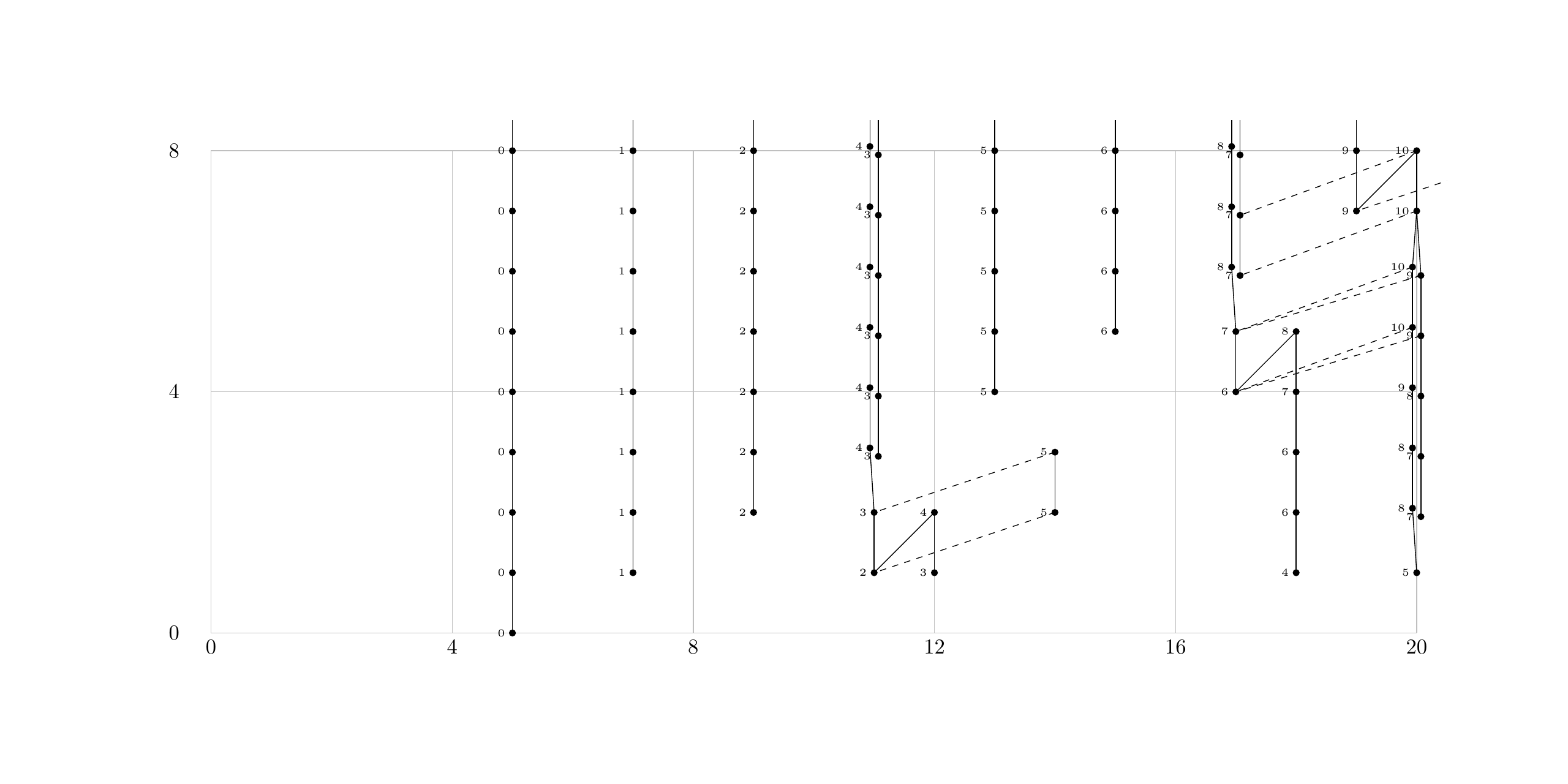}}}
\caption{$\Ext_\cA(M_2,\F_2)$}
\label{chart:M2}
\end{figure}

The kernel, $M_3'$ of $M_2 \lra M_2'$ is $4$-dimensional over $F_2$ and
has $\Ext_\cA^{1,21}(M_3',\F_2) = \langle 1_3 \rangle$.   By exactness of the long
exact sequence in $\Ext$  for $M_2' \lla M_2 \lla M_3'$, we must have
$\partial(1_3) = 2_8 \in \Ext_\cA^{2,21}(M_2',\F_2)$, and calculation of the
chain map lifting the defining $1$-cocycle defining this extension  confirms this.

\begin{figure}
{\fbox{\includegraphics[scale=0.5,trim=2cm 1cm 1cm 1cm, clip ]{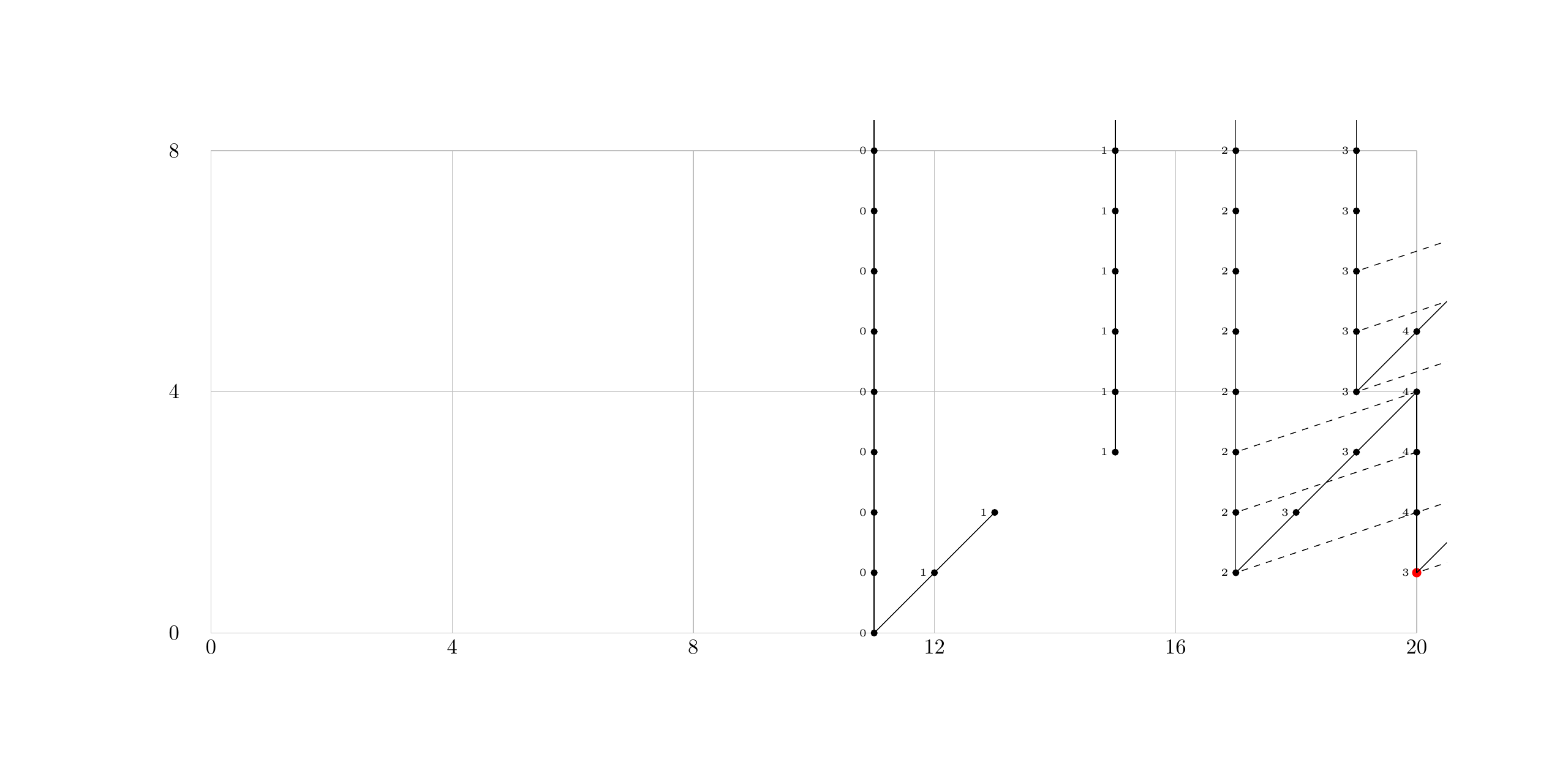}}}
\caption{$\Ext_\cA(M_3',\F_2)$}
\label{chart:M3p}
\end{figure}

We now have an extension $M_3' \lla M_3 \lla \Sigma^{21} \F_2$ defined
by $1_3 \in \Ext_\cA^{1,21}(M_3',\F_2)$.   Using the pushout of the cocycle
$1_3$ and the differential $d : D_1 \lra D_0$ in a minimal resolution of
$M_3'$, it is easy to check that
\[
M_3 = \frac{\Sigma^{11} \cA}{(Sq^1, Sq^{2},Sq^7)}[11,21].
\]
The $\Ext$ chart for $M_3$ in Figure~\ref{chart:M3} shows that 
$\Ext_\cA^{1,21}(M_3,\F_2) = 0$, so that $1_3$ must be $\partial(0_0)$,
where $0_0 \in \Ext_\cA^{0,21}(\Sigma^{21}\F_2,\F_2)$
is the cocycle $\Sigma^{21}\cA \lra \Sigma^{21}\F_2$
lifting the identity map of $\Sigma^{21}\F_2$.

\begin{figure}
{\fbox{\includegraphics[scale=0.5,trim=2cm 1cm 1cm 1cm, clip ]{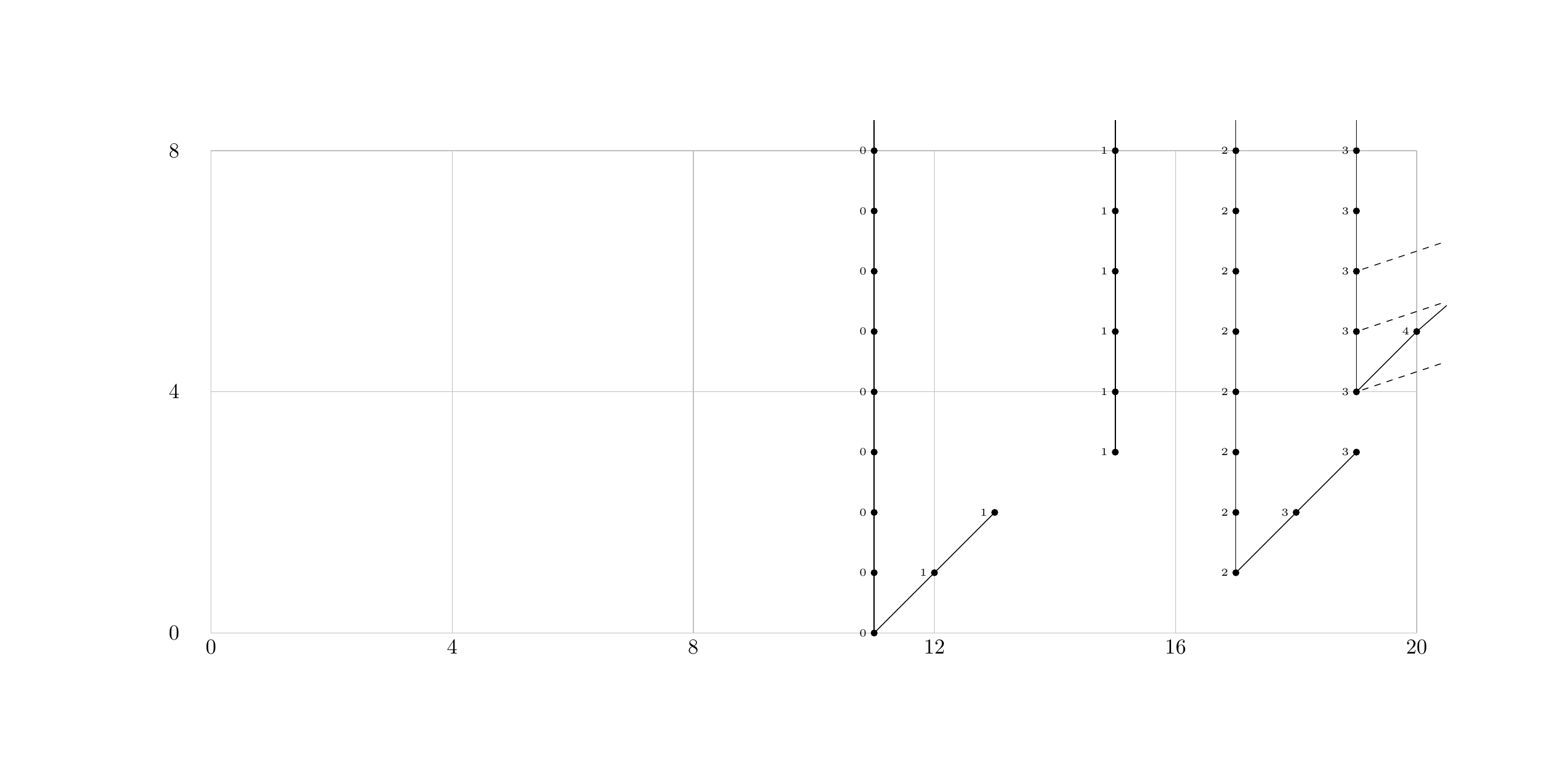}}}
\caption{$\Ext_\cA(M_3,\F_2)$}
\label{chart:M3}
\end{figure}

By construction, the extension
\[
\cE_{e_0} : 0 \lla \F_2 \lla M_0 \lla  M_{1} \lla M_2 \lla M_3 \lla \Sigma^{21} \F_2 \lla 0
\]
realizes $e_0 = 4_5$, but we also wish to put in evidence the chain map 
$e: C_* \lra \cE_{e_0}$ from the minimal
resolution of Section~\ref{sec:minimal} to $\cE_{e_0}$.
Let $k_i$ be the generator of the cyclic $\cA$-module $M_i$, $i=0,1,2,3$
or of $\Sigma^{21}\F_2$ if $i=4$.
The maps in the extension are
\begin{itemize}
\item $\partial(k_1) = Sq^1 k_0$,
\item $\partial(k_2) = Sq^4 k_1$,
\item $\partial(k_3) = Sq^6 k_2$,
\item $\partial(k_4) = Sq^2 Sq^8 k_3$.
\end{itemize}

\begin{proposition}
The extension $\cE_{e_0}$  represents the cocycle 
$e_0 = 4_5 \in \Ext_{\cA}^{4,21}(\F_2,\F_2)$.
\end{proposition}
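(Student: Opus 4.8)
The plan is to follow exactly the template used for the propositions on $c_0$, $c_1$, and $f_0$: exhibit an explicit $\cA$-linear chain map $e : C_* \lra \cE_{e_0}$ from the minimal resolution of Section~\ref{sec:minimal} to the extension, regarded as a chain complex and covering the identity of $\F_2$, and then read off that its top component $C_4 \lra \Sigma^{21}\F_2$ is dual to $4_5$. By the comparison theorem such a chain map exists and is unique up to chain homotopy, and the class in $\Ext_\cA^{4,21}(\F_2,\F_2)$ classified by $\cE_{e_0}$ is by definition the cohomology class of this top component; so writing down the map and verifying that it terminates in $4_5$ is all that is required.

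Concretely, I would record the nonzero values $e(0_0)=k_0$, $e(1_0)=k_1$, $e(2_g)=k_2$, $e(3_{g'})=k_3$, and $e(4_5)=k_4$ — choosing in each homological degree the generator of $C_*$ whose internal degree matches that of $k_i$, and adding whatever auxiliary values the chain-map equations force (compare the extra term $Sq^4 k_2$ appearing for $c_0$). The verification is then mechanical: using the extension differentials $\partial(k_1)=Sq^1 k_0$, $\partial(k_2)=Sq^4 k_1$, $\partial(k_3)=Sq^6 k_2$, $\partial(k_4)=Sq^2 Sq^8 k_3$ together with the known differentials of $C_*$, one checks $\partial e = e\, d$ on each generator; for instance $e(d(1_0)) = Sq^1 k_0 = \partial(k_1) = \partial(e(1_0))$, and at each higher stage the Steenrod operation occurring in $d$ is matched against the one defining $\partial(k_i)$.

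In fact the real work has already been done during the construction, so the shortest proof simply invokes the framework of Section~\ref{sect:practical}. The construction produced, at each successive stage, an honest lift under the connecting homomorphism: $\partial(3_9)=e_0$, $\partial(2_8)=3_9$, $\partial(1_3)=2_8$, and finally $\partial(0_0)=1_3$. Composing these gives $e_0 = \partial^4(1_{\Sigma^{21}\F_2})$, which is precisely the criterion of Section~\ref{sect:practical} for $\cE_{e_0}$ to represent $e_0$.

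The one genuinely substantive point — in either formulation — is \emph{pinning down the top value}: one must be sure the terminal class is exactly $4_5$ and not $4_5$ plus some other element of the same bidegree (compare the indeterminacy noted for $f_0$ versus $f_0 + h_1^3 h_4$). This is controlled by the vanishing $\Ext$ computations carried out along the way, e.g. $\Ext_\cA^{1,21}(M_3,\F_2)=0$ and $\Ext_\cA^{2,21}(M_2,\F_2)=0$, which force each connecting-homomorphism identification (equivalently, the top-degree solution of $\partial e = e\, d$) to be unique with no indeterminacy. I expect this to be the only step requiring care; everything else is bookkeeping.
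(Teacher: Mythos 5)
Your proposal is correct and matches the paper's proof, which simply exhibits the chain map $e:C_*\lra\cE_{e_0}$ with $e(0_0)=k_0$, $e(1_0)=k_1$, $e(2_2)=k_2$, $e(2_4)=Sq^4k_2$, $e(2_7)=Sq^8Sq^4k_2$, $e(3_3)=k_3$, $e(3_5)=Sq^6k_3$, $e(4_5)=k_4$ and reads off that the top component is dual to $4_5$. Your alternative formulation via $e_0=\partial^4(1_{\Sigma^{21}\F_2})$ is also valid and is in effect already carried out in the construction of $\cE_{e_0}$ in Section~\ref{sect:e0}.
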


\begin{proof}
It suffices to exhibit the following  chain map $e:C_* \lra \cE_{e_0}$
from the minimal resolution in Section~\ref{sec:minimal}
to $\cE_{e_0}$.  
The nonzero values of $e$ are
\begin{itemize}
\item[0:] $e(0_0) = k_0$
\item[1:] $e(1_0) = k_1$
\item[2:] $e(2_2) = k_2$ 
\item[\phantom{2:}] $e(2_4) = Sq^4 k_2$
\item[\phantom{2:}] $e(2_7) = Sq^8 Sq^4 k_2$
\item[3:] $e(3_3) = k_3$
\item[\phantom{3:}] $e(3_5) = Sq^6 k_3$
\item[4:] $e(4_5) = k_4 = \Sigma^{21}\iota $
\qedhere
\end{itemize}
\end{proof}

\begin{proposition}
\label{prop:sqe0}
The squaring operations on $e_0$ are
\[
Sq^*(e_0) = (8_{13}, 7_{12}, 6_{14}, 5_{17}, 4_{16}) = 
(e_0^2, m, t, x, e_1).
\]
\end{proposition}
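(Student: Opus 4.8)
The plan is to follow exactly the template used in the proofs of Propositions~\ref{prop:sqc0}, \ref{prop:sqc1}, and \ref{prop:sqf0}, now applied to the extension $\cE_{e_0}$ and the chain map $e : C_* \lra \cE_{e_0}$ recorded above. Since $e_0 = 4_5$ has $s = 4$ and $t = 21$, Theorem~\ref{thm:main} tells us that $Sq^{4-j}(e_0)$ is represented by the cocycle $\widetilde{\Delta}_{j} : C_{4+(4-j)} = C_{8-j} \lra \Sigma^{21}\F_2 \otimes \Sigma^{21}\F_2 \cong \Sigma^{42}\F_2$, for $j = 0, \dots, 4$. Concretely, the five values $Sq^4(e_0), \dots, Sq^0(e_0)$ will be read off from $\widetilde{\Delta}_0, \widetilde{\Delta}_1, \dots, \widetilde{\Delta}_4$ evaluated on the generators of $C_8, C_7, C_6, C_5, C_4$ respectively, by picking out the coefficient of $k_4 \otimes k_4$, the generator of $\Sigma^{42}\F_2$ inside $\cE_{e_0} \otimes \cE_{e_0}$.

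First I would construct $\widetilde{\Delta}_0 = \widetilde{\Delta}$, a chain map $C_* \lra \cE_{e_0} \otimes \cE_{e_0}$ lifting the unit isomorphism $\F_2 \lra \F_2 \otimes \F_2$, by induction on homological degree: having defined $\widetilde{\Delta}$ on $C_{n-1}$, one lifts $\widetilde{\Delta} d$ through the differential of $(\cE_{e_0} \otimes \cE_{e_0})_n$ on each $\cA$-generator of $C_n$. Because $\cE_{e_0} \otimes \cE_{e_0}$ is assembled from the explicit modules $M_0, \dots, M_3, \Sigma^{21}\F_2$ and the differentials $\partial(k_{i+1})$ listed above, each such lift is a finite linear-algebra problem over $\F_2$ in a bounded range of internal degrees. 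The result is recorded as a table of nonzero values $\widetilde{\Delta}(s_g)$, in the same format as Table~\ref{table:Df0}. Next I would produce the higher homotopies $\widetilde{\Delta}_i$ for $i > 0$, each a chain homotopy from $\widetilde{\Delta}_{i-1}$ to $\tau \widetilde{\Delta}_{i-1}$ satisfying equation~(\ref{eqn:chmap}), again generator by generator by solving
\[
(d \otimes 1 + 1 \otimes d)\widetilde{\Delta}_i(s_g) = (\widetilde{\Delta}_{i-1} - \tau\widetilde{\Delta}_{i-1})(s_g) - \widetilde{\Delta}_i(d(s_g)),
\]
and recorded in a table analogous to Table~\ref{table:Dif0}.

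Reading off the coefficient of $k_4 \otimes k_4$ in $\widetilde{\Delta}_0(8_g)$, $\widetilde{\Delta}_1(7_g)$, $\widetilde{\Delta}_2(6_g)$, $\widetilde{\Delta}_3(5_g)$, and $\widetilde{\Delta}_4(4_g)$ then yields the five cocycles, whose classes are $8_{13}$, $7_{12}$, $6_{14}$, $5_{17}$, and $4_{16}$. The top value $Sq^4(e_0) = 8_{13}$ can be cross-checked against the general fact that $Sq^s(x) = x^2$, so it must be $e_0^2$; the bottom value $Sq^0(e_0) = 4_{16}$ is the doubling $e_1$, consistent with $Sq^0(c_0) = c_1$ and $Sq^0(f_0) = f_1$.

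The main obstacle is computational bulk rather than conceptual difficulty: assembling $\cE_{e_0} \otimes \cE_{e_0}$ and carrying the inductive lift through homological degree $8$ produces a large number of terms, as the length of Table~\ref{table:Df0} already suggests, and the homotopy equation must be solved consistently at every stage. The one genuinely interpretive step is identifying the resulting cocycles with the named classes $m, t, x, e_1$ in $\Ext_\cA^{7,42}$, $\Ext_\cA^{6,42}$, $\Ext_\cA^{5,42}$, and $\Ext_\cA^{4,42}$ of the minimal resolution of Section~\ref{sec:minimal}: one must verify, using the known additive structure of $\Ext$ in these bidegrees, that each cocycle is dual to the asserted generator and not to some other class of the same bidegree. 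As with $f_0$, an honest execution of this calculation is most reliably carried out with machine assistance.
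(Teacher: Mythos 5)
Your outline is exactly the computation the paper intends: the paper itself leaves this proof to the reader, and its proofs of Propositions~\ref{prop:sqc0}, \ref{prop:sqc1}, and \ref{prop:sqf0} consist precisely of tabulating the chain map $\widetilde{\Delta}_0$ and the higher homotopies $\widetilde{\Delta}_i$ for $\cE_{e_0}\otimes\cE_{e_0}$ and reading off the coefficient of $k_4\otimes k_4$, with the indexing $Sq^i(e_0)=\widetilde{\Delta}_{4-i}|_{C_{4+i}}$ as you state. The only difference is that you describe the procedure rather than exhibit the resulting tables, which is no further from a complete proof than the paper's own text.
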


\noindent
The proof is left to the reader.

\section{An extension for $d_0$ and the $Sq^i(d_0)$}
\label{sect:d0}

Examining the extension $\cE_{e_0}$, we find that 
$d_0 = 4_3 \in \Ext_{\cA}^{4,18}(\F_2,\F_2)$
lifts all the way to $M_3'$:
\[
\xymatrix@R=2ex{
\Ext_\cA^{4,18}(\F_2)
&
\Ext_\cA^{3,18}(M_1')
\ar_{\partial}[l]
&
\Ext_\cA^{2,18}(M_2')
\ar_{\partial}[l]
&
\Ext_\cA^{1,18}(M_3')
\ar_{\partial}[l]
\\
d_0 = 4_3
&
3_6
\ar@{|->}[l]
&
2_6
\ar@{|->}[l]
&
1_2
\ar@{|->}[l]
\\}
\]
Since we are considering a class in total degree $18$, we also truncate
the modules in $\cE_{e_0}$ above degree $18$.   This has the effect of
eliminating a single class in degree $19$ in 
$\widetilde{M}_2 = M_2[5,18]$ and
$\widetilde{M}_3' = M_3'[11,18]$.

We then have an extension 
$\widetilde{M}_3' \lla \widetilde{M}_3 \lla \Sigma^{18} \F_2$ defined
by $1_2 \in \Ext_\cA^{1,21}(\widetilde{M}_3',\F_2)$.  It is easy to check that
\[
\widetilde{M}_3 = \frac{\Sigma^{11} \cA}{(Sq^1, Sq^{2})}[11,18].
\]
(This is the module Mahowald would call $M_7$ because it is the smallest
$\cA$-module in which $Sq^7$ is nonzero.)
Calculating $\Ext_\cA(\widetilde{M}_3,\F_2)$ shows that 
$\Ext_\cA^{1,18}(\widetilde{M}_3,\F_2) = 0$, so that $1_2$ must be $\partial(0_0)$,
where $0_0 \in \Ext_\cA^{0,18}(\Sigma^{18}\F_2,\F_2)$
is the cocycle $\Sigma^{18}\cA \lra \Sigma^{18}\F_2$
lifting the identity map of $\Sigma^{18}\F_2$.
By construction, the extension
\[
\cE_{d_0} : 0 \lla \F_2 \lla M_0 \lla  M_{1} \lla \widetilde{M}_2 \lla 
   \widetilde{M}_3 \lla \Sigma^{18} \F_2 \lla 0
\]
realizes $d_0 = 4_3$, but we also wish to put in evidence the chain map 
$d: C_* \lra \cE_{d_0}$ from the minimal
resolution of Section~\ref{sec:minimal} to $\cE_{d_0}$.
Let $k_i$ be the generator of the cyclic $\cA$-module $M_i$ for $i=0,1$,
for $\widetilde{M}_i$ if $i=2,3$,
or for $\Sigma^{18}\F_2$ if $i=4$.
The maps in the extension are
\begin{itemize}
\item $\partial(k_1) = Sq^1 k_0$,
\item $\partial(k_2) = Sq^4 k_1$,
\item $\partial(k_3) = Sq^6 k_2$,
\item $\partial(k_4) = Sq^7 k_3$.
\end{itemize}

\begin{proposition}
The extension $\cE_{d_0}$  represents the cocycle 
$d_0 = 4_3 \in \Ext_{\cA}^{4,18}(\F_2,\F_2)$.
\end{proposition}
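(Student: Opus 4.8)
The plan is to follow the template of the preceding propositions and exhibit an explicit chain map $d : C_* \lra \cE_{d_0}$ from the minimal resolution of Section~\ref{sec:minimal}, regarded as a complex augmented over $\F_2$, to the extension $\cE_{d_0}$, lifting the identity of $\F_2$ and having top component $C_4 \lra \Sigma^{18}\F_2$ equal to the cocycle $4_3$. In Yoneda terms this is exactly what it means for $\cE_{d_0}$ to represent the class obtained by pulling the generator of $\Ext_\cA^{0,18}(\Sigma^{18}\F_2,\F_2)$ back through $\cE_{d_0}$, so producing such a map proves the proposition.

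I would build $d$ inductively, one homological degree at a time. The value $d(0_0) = k_0$ is forced by the requirement that $d$ cover the identity of $\F_2$. Given $d$ through homological degree $s$, a lift to degree $s+1$ exists because $C_{s+1}$ is $\cA$-free and $\cE_{d_0}$ is exact at $M_s$: the composite of the resolution differential $C_{s+1} \lra C_s$ with the already-constructed map $C_s \lra M_s$ lands in the kernel of $M_s \lra M_{s-1}$ by the chain-map relation established one degree down, hence in the image of $M_{s+1} \lra M_s$, and one chooses any preimage on generators. Because $\cE_{d_0}$ is by construction (Section~\ref{sect:d0}) the truncation of $\cE_{e_0}$ above degree $18$, the low-degree choices may be taken to agree with the chain map $e$ of the previous section on all generators of internal degree at most $18$; in particular I expect $d(1_0) = k_1$, $d(2_2) = k_2$, $d(2_4) = Sq^4 k_2$, and $d(3_3) = k_3$ to transfer verbatim.

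The one genuinely new point is at the top of the extension, where the relation $\partial(k_4) = Sq^7 k_3$ replaces $\partial(k_4) = Sq^2 Sq^8 k_3$ of $\cE_{e_0}$. Here I must verify that the generator $4_3$ lifts to $k_4 = \Sigma^{18}\iota$, i.e. that applying $d$ to the image of $4_3$ under the resolution differential reproduces exactly $Sq^7 k_3 = \partial(k_4)$. This is the step that actually pins down the target class as $4_3$ rather than some other element of $\Ext_\cA^{4,18}(\F_2,\F_2)$, and it is forced by the truncation: cutting $\widetilde M_3 = \Sigma^{11}\cA/(Sq^1,Sq^2)[11,18]$ off above degree $18$ is precisely what makes $Sq^7 k_3$ the bottom class hit by $\partial(k_4)$.

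As in the earlier cases, the main obstacle is computational rather than conceptual: one must confirm that the proposed values of $d$ are compatible with all the defining relations of the cyclic modules $M_0$, $M_1$, $\widetilde M_2$, $\widetilde M_3$ and commute with the differential of $C_*$. Since those relations and that differential are explicit and the total degree is only $18$, this is a finite check concentrated in low degrees, the bulk of which is inherited from the already-verified $e_0$ computation. I would therefore simply record the nonzero values of $d$ and, exactly as in the analogous propositions above, leave the routine verification that they assemble into a chain map to the reader.
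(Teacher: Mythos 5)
Your proposal is correct and follows essentially the same route as the paper: the paper's proof consists precisely of exhibiting the nonzero values of the chain map $d : C_* \lra \cE_{d_0}$ (namely $d(0_0)=k_0$, $d(1_0)=k_1$, $d(2_2)=k_2$, $d(2_4)=Sq^4k_2$, $d(2_7)=Sq^8Sq^4k_2$, $d(3_3)=k_3$, $d(3_5)=Sq^6k_3$, $d(4_3)=k_4=\Sigma^{18}\iota$), which are exactly the values of the $e_0$ chain map transported through the truncation, with the top cocycle hitting $4_3$ as you predict. Your added justification of why the lift exists and why the verification reduces to a finite low-degree check is consistent with, and slightly more explicit than, what the paper records.
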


\begin{proof}
It suffices to exhibit the following  chain map $d:C_* \lra \cE_{d_0}$
from the minimal resolution in Section~\ref{sec:minimal}
to $\cE_{d_0}$.  
The nonzero values of $d$ are
\begin{itemize}
\item[0:] $d(0_0) = k_0$
\item[1:] $d(1_0) = k_1$
\item[2:] $d(2_2) = k_2$ 
\item[\phantom{2:}] $d(2_4) = Sq^4 k_2$
\item[\phantom{2:}] $d(2_7) = Sq^8 Sq^4 k_2$
\item[3:] $d(3_3) = k_3$
\item[\phantom{3:}] $d(3_5) = Sq^6 k_3$
\item[4:] $d(4_3) = k_4 = \Sigma^{18}\iota $
\end{itemize}
\end{proof}

\begin{proposition}
\label{prop:sqd0}
The squaring operations on $d_0$ are
\[
Sq^*(d_0) = (8_{7}, 0, 6_{10}, 0, 4_{13}) = 
(d_0^2, 0, r, 0, d_1).
\]
\end{proposition}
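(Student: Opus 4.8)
The plan is to apply Theorem~\ref{thm:main} to the extension $\cE_{d_0}$, exactly as in the proofs of Propositions~\ref{prop:sqc0}, \ref{prop:sqc1} and \ref{prop:sqf0}. Since the preceding proposition exhibits a chain map realizing $\cE_{d_0}$ as a representative of $d_0$, it suffices to construct a chain map $\Delta_0 = \Delta : C_* \lra \cE_{d_0} \otimes \cE_{d_0}$ lifting the isomorphism $\F_2 \lra \F_2 \otimes \F_2$, together with chain homotopies $\Delta_1,\dots,\Delta_4$ satisfying equation~(\ref{eqn:chmap}), and then to read off the operations. With $s = 4$ and $t = 18$, Theorem~\ref{thm:main} gives $Sq^i(d_0)$ as the class of the cocycle $\Delta_{4-i} : C_{4+i} \lra \Sigma^{18}\F_2 \otimes \Sigma^{18}\F_2 \cong \Sigma^{36}\F_2$, so that $Sq^4, Sq^3, Sq^2, Sq^1, Sq^0$ are computed from $\Delta_0,\dots,\Delta_4$ restricted to $C_8,\dots,C_4$. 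Because the target $\Sigma^{36}\F_2$ is spanned by $k_4 \otimes k_4$, each such cocycle simply records, for each generator of $C_{4+i}$, the coefficient of $k_4 \otimes k_4$ in its image.

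First I would build $\Delta$ inductively on homological degree, starting from $\Delta(0_0) = k_0 \otimes k_0$ and lifting along the differential of $\cE_{d_0} \otimes \cE_{d_0}$, using the attaching maps $\partial(k_1) = Sq^1 k_0$, $\partial(k_2) = Sq^4 k_1$, $\partial(k_3) = Sq^6 k_2$ and $\partial(k_4) = Sq^7 k_3$ and the Cartan formula for the coproduct. A substantial economy is available: $\cE_{d_0}$ shares the modules $M_0$, $M_1$ and $\widetilde M_2 = M_2[5,18]$ and the three attaching maps $Sq^1, Sq^4, Sq^6$ with a truncation of $\cE_{e_0}$, so the entire bottom portion of $\Delta$ and of each $\Delta_i$ can be imported from, or produced by the same {\tt ext} computation as, the $e_0$ case of Section~\ref{sect:e0}. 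Only the data involving the top module $\widetilde M_3$ and the new attaching map $Sq^7$ are genuinely different from $e_0$. With $\Delta$ in hand I would then solve equation~(\ref{eqn:chmap}) degree by degree for $\Delta_1,\dots,\Delta_4$, tabulating the nonzero values as in Tables~\ref{table:Df0} and \ref{table:Dif0}.

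The last step is to identify the cohomology class of each cocycle among the named generators of the minimal resolution of Section~\ref{sec:minimal}. The extreme cases are essentially automatic: $Sq^4(d_0) = [\Delta_0|_{C_8}]$ is the squaring $d_0^2 = 8_7$ by the general identity $Sq^s(x) = x^2$, and $Sq^0(d_0) = [\Delta_4|_{C_4}]$ should be detected by a single term $\Delta_4(4_{13}) = k_4 \otimes k_4$, giving $d_1 = 4_{13}$ (consistent with $Sq^0$ being the doubling operation $d_0 \mapsto d_1$). The substantive assertions are that $\Delta_2|_{C_6}$ is dual to $r = 6_{10}$, and that the cocycles $\Delta_1|_{C_7}$ and $\Delta_3|_{C_5}$ are cohomologically trivial, yielding $Sq^3(d_0) = Sq^1(d_0) = 0$.

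I expect the identification, rather than the construction of the $\Delta_i$, to be the main obstacle. Exhibiting a nonzero $k_4 \otimes k_4$ coefficient is routine, but the two vanishing statements require showing that the relevant cocycles are coboundaries --- not merely that the listed values happen to cancel --- which means controlling the dual differential out of $C_7$ and $C_5$ in internal degree $36$. Likewise, pinning down that the surviving classes are exactly the generators $r = 6_{10}$ and $d_1 = 4_{13}$, as opposed to other classes in the same bidegrees, is where careful bookkeeping with the first author's programs is indispensable.
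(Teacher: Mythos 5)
The paper leaves this proof to the reader, and your sketch is precisely the intended argument: apply Theorem~\ref{thm:main} to $\cE_{d_0}$ and record the $\Delta_i$ in tables exactly as in Propositions~\ref{prop:sqc0}, \ref{prop:sqc1} and \ref{prop:sqf0}, reading off the coefficient of $k_4\otimes k_4$ on $C_{4+i}$. One simplification you overlook: since $C_*$ is a \emph{minimal} resolution, the induced differential on $\Hom_\cA(C_*,\Sigma^{36}\F_2)$ is zero, so the two vanishing statements amount only to checking that the $k_4\otimes k_4$ coefficient vanishes on every generator of $C_7$ and $C_5$ in internal degree $36$ --- there is no coboundary to control --- and likewise the surviving cocycles are literally the duals of $6_{10}$ and $4_{13}$ with no further identification needed.
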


\noindent
The proof is left to the reader.

\section{An explicit minimal resolution}
\label{sec:minimal}

Here is an explicit minimal resolution of $\F_2$ over the mod 2 Steenrod 
algebra $\cA$, complete  through  homological degree $s=8$ and
internal degree $t = 44$,   computed by
the computer code {\tt ext.1.9.2}.  The same resolution will be
produced by any modern version (post 2000) of the software and is
well defined by the ordering of monomials, which is first by
degree of the $\cA$-module generator, then by a reverse lexicographic ordering
of the Milnor basis.   For example, in degree $10$, for example, we would 
have the ordering
\[
Sq^{10} > Sq^{(7,1)} > Sq^{(4,2)} > Sq^{(1,3)} > Sq^{(3,0,1)} > Sq^{(0,1,1)}
\]

The $\cA$-module generators in homological degree $s$ are called $s_0$, $s_1$,
\ldots, $s_g$, \ldots, in order of internal degree.
The ordering of elements in the terms $C_{s,t}$ described above and the usual
row reduction algorithm in linear algebra breaks the ties when two
generators appear in the same bidegree.

\subsection{Homological degree 1}

Complete through degree $t=127$.

\begin{itemize}
\addtolength{\itemsep}{2.4ex}

\item[1:] $d(1_0) = Sq^{1} 0_0$

\item[2:] $d(1_1) = Sq^{2} 0_0$

\item[4:] $d(1_2) = Sq^{4} 0_0$

\item[8:] $d(1_3) = Sq^{8} 0_0$

\item[16:] $d(1_4) = Sq^{16} 0_0$

\item[32:] $d(1_5) = Sq^{32} 0_0$

\item[64:] $d(1_6) = Sq^{64} 0_0$

\end{itemize}

\subsection{Homological degree 2}

Complete through degree $t=44$.

\begin{itemize}
\addtolength{\itemsep}{2.4ex}

\item[2:] $d(2_0) =
Sq^{{1}} 1_0
$

\item[4:] $d(2_1) =
Sq^{3} 1_0
+Sq^{2} 1_1
$

\item[5:] $d(2_2) =
Sq^{4} 1_0
+Sq^{(0,1)} 1_1
+Sq^{1} 1_2
$

\item[8:] $d(2_3) =
Sq^{7} 1_0
+Sq^{6} 1_1
+Sq^{4} 1_2
$

\item[9:] $d(2_4) =
(Sq^{8}+Sq^{(2,2)}) 1_0
+(Sq^{7}+Sq^{(4,1)}+Sq^{(0,0,1)}) 1_1
+Sq^{1} 1_3
$

\item[10:] $d(2_5) =
(Sq^{9}+Sq^{(3,2)}) 1_0
+(Sq^{8}+Sq^{(5,1)}) 1_1
+Sq^{(0,2)} 1_2
+Sq^{2} 1_3
$

\item[16:] $d(2_6) =
Sq^{15} 1_0
+Sq^{14} 1_1
+Sq^{12} 1_2
+Sq^{8} 1_3
$

\item[17:] $d(2_7) =
(Sq^{16}+Sq^{(10,2)}+Sq^{(7,3)}+Sq^{(4,4)}+Sq^{(2,0,2)}) 1_0 +\\
(Sq^{(12,1)}+Sq^{(3,4)}+Sq^{(0,5)}+Sq^{(8,0,1)}+Sq^{(0,0,0,1)}) 1_1
+Sq^{13} 1_2
+Sq^{1} 1_4
$

\item[18:] $d(2_8) =
(Sq^{17}+Sq^{(11,2)}+Sq^{(3,0,2)}) 1_0
+(Sq^{16}+Sq^{(4,4)}+Sq^{(1,5)}) 1_1
+(Sq^{14}+Sq^{(8,2)}+Sq^{(0,0,2)}) 1_2
+Sq^{2} 1_4
$

\item[20:] $d(2_9) =
(Sq^{19}+Sq^{(10,3)}+Sq^{(7,4)}) 1_0
+(Sq^{18}+Sq^{(15,1)}+Sq^{(6,4)}) 1_1
+(Sq^{16}+Sq^{(10,2)}) 1_2
+Sq^{(0,4)} 1_3
+Sq^{4} 1_4
$

\item[32:] $d(2_{10}) =
Sq^{31} 1_0
+Sq^{30} 1_1
+Sq^{28} 1_2
+Sq^{24} 1_3
+Sq^{16} 1_4
$

\item[33:] $d(2_{11}) =
(Sq^{32}+Sq^{(26,2)}+Sq^{(20,4)}+Sq^{(17,5)}+Sq^{(18,0,2)}+Sq^{(15,1,2)}+Sq^{(12,2,2)}+Sq^{(6,4,2)}+Sq^{(3,5,2)}+Sq^{(0,6,2)}+Sq^{(11,0,3)}+Sq^{(4,0,4)}+Sq^{(2,0,0,2)}) 1_0 +
(Sq^{(28,1)}+Sq^{(19,4)}+Sq^{(16,5)}+Sq^{(13,6)}+Sq^{(7,8)}+Sq^{(24,0,1)}+Sq^{(9,5,1)}+Sq^{(3,7,1)}+Sq^{(3,0,4)}+Sq^{(0,1,4)}+Sq^{(16,0,0,1)}+Sq^{(4,4,0,1)}+Sq^{(0,0,0,0,1)}) 1_1 +\\
(Sq^{29}+Sq^{(20,3)}+Sq^{(5,8)}+Sq^{(15,0,2)}+Sq^{(12,1,2)}+Sq^{(8,0,3)}) 1_2
+Sq^{1} 1_5
$

\item[34:] $d(2_{12}) =
(Sq^{(27,2)}+Sq^{(15,6)}+Sq^{(12,7)}+Sq^{(9,8)}+Sq^{(10,3,2)}+Sq^{(5,0,4)}+Sq^{(2,1,4)}+Sq^{(3,0,0,2)}) 1_0
+(Sq^{32}+Sq^{(29,1)}+Sq^{(20,4)}+Sq^{(14,6)}+Sq^{(8,8)}+Sq^{(5,9)}+Sq^{(10,5,1)}+Sq^{(7,6,1)}+Sq^{(4,7,1)}+Sq^{(1,8,1)}+Sq^{(4,0,4)}+Sq^{(1,1,4)}+Sq^{(17,0,0,1)}) 1_1
+(Sq^{(24,2)}+Sq^{(6,8)}+Sq^{(0,10)}+Sq^{(16,0,2)}+Sq^{(0,0,0,2)}) 1_2
+Sq^{26} 1_3
+Sq^{2} 1_5
$

\item[36:] $d(2_{13}) =
(Sq^{35}+Sq^{(20,5)}+Sq^{(15,2,2)}+Sq^{(4,1,4)}) 1_0
+(Sq^{34}+Sq^{(31,1)}+Sq^{(22,4)}+Sq^{(13,7)}+Sq^{(7,9)}+Sq^{(1,11)}+Sq^{(15,4,1)}+Sq^{(9,6,1)}+Sq^{(6,7,1)}+Sq^{(3,8,1)}+Sq^{(0,9,1)}+Sq^{(6,0,4)}+Sq^{(19,0,0,1)}) 1_1
+(Sq^{32}+Sq^{(23,3)}+Sq^{(8,8)}+Sq^{(2,10)}+Sq^{(15,1,2)}) 1_2
+(Sq^{28}+Sq^{(16,4)}+Sq^{(0,0,4)}) 1_3
+Sq^{4} 1_5
$

\item[40:] $d(2_{14}) =
(Sq^{39}+Sq^{(30,3)}+Sq^{(27,4)}+Sq^{(24,5)}+Sq^{(21,6)}+Sq^{(18,7)}+Sq^{(12,9)}+Sq^{(3,2,0,2)}) 1_0
+(Sq^{38}+Sq^{(23,5)}+Sq^{(20,6)}+Sq^{(14,8)}+Sq^{(10,7,1)}+Sq^{(1,3,4)}+Sq^{(3,0,5)}+Sq^{(7,0,0,0,1)}) 1_1
+(Sq^{36}+Sq^{(30,2)}+Sq^{(27,3)}+Sq^{(12,8)}+Sq^{(3,11)}) 1_2
+(Sq^{32}+Sq^{(20,4)}) 1_3
+Sq^{(0,8)} 1_4
+Sq^{8} 1_5
$

\end{itemize}

\subsection{Homological degree 3}

Complete through degree $t=44$.

\begin{itemize}
\addtolength{\itemsep}{2.4ex}

\item[3:] $d(3_0) =
Sq^{1} 2_0
$

\item[6:] $d(3_1) =
Sq^{4} 2_0
+Sq^{2} 2_1
+Sq^{1} 2_2
$

\item[10:] $d(3_2) =
(Sq^{8}+Sq^{(2,2)}) 2_0
+(Sq^{6}+Sq^{(0,2)}) 2_1
+Sq^{1} 2_4
$

\item[11:] $d(3_3) =
(Sq^{9}+Sq^{(3,2)}) 2_0
+(Sq^{(0,0,1)}) 2_1
+Sq^{6} 2_2
+(Sq^{3}+Sq^{(0,1)}) 2_3
$

\item[12:] $d(3_4) =
Sq^{10} 2_0
+(Sq^{8}+Sq^{(1,0,1)}) 2_1
+Sq^{4} 2_3
+Sq^{3} 2_4
+Sq^{2} 2_5
$

\item[17:] $d(3_5) =
(Sq^{(9,2)}+Sq^{(6,3)}) 2_0
+(Sq^{(7,2)}) 2_1
+(Sq^{(0,4)}) 2_2
+(Sq^{9}+Sq^{(0,3)}) 2_3
+(Sq^{8}+Sq^{(2,2)}) 2_4
+(Sq^{7}+Sq^{(4,1)}+Sq^{(0,0,1)}) 2_5
+Sq^{1} 2_6
$

\item[18:] $d(3_6) =
(Sq^{16}+Sq^{(10,2)}+Sq^{(7,3)}+Sq^{(4,4)}+Sq^{(1,5)}+Sq^{(2,0,2)}) 2_0
+(Sq^{14}+Sq^{(11,1)}+Sq^{(8,2)}+Sq^{(2,4)}+Sq^{(7,0,1)}+Sq^{(4,1,1)}+Sq^{(0,0,2)}) 2_1
+Sq^{1} 2_7
$

\item[20:] $d(3_7) =
(Sq^{18}+Sq^{(3,5)}) 2_0
+(Sq^{16}+Sq^{(4,4)}+Sq^{(6,1,1)}) 2_1
+(Sq^{15}+Sq^{(3,4)}) 2_2
+(Sq^{12}+Sq^{(9,1)}+Sq^{(3,3)}+Sq^{(0,4)}) 2_3
+Sq^{3} 2_7
+Sq^{2} 2_8
$

\item[21:] $d(3_8) =
(Sq^{19}+Sq^{(13,2)}+Sq^{(4,5)}+Sq^{(2,1,2)}) 2_0
+(Sq^{17}+Sq^{(11,2)}+Sq^{(5,4)}+Sq^{(2,5)}+Sq^{(10,0,1)}+Sq^{(7,1,1)}) 2_1
+(Sq^{16}+Sq^{(4,4)}) 2_2
+(Sq^{13}+Sq^{(10,1)}) 2_3
+(Sq^{12}+Sq^{(6,2)}+Sq^{(3,3)}+Sq^{(0,4)}) 2_4
+(Sq^{(8,1)}+Sq^{(4,0,1)}) 2_5
+Sq^{5} 2_6
+Sq^{4} 2_7
+(Sq^{(0,1)}) 2_8
+Sq^{1} 2_9
$

\item[22:] $d(3_9) =
(Sq^{(14,2)}+Sq^{(11,3)}+Sq^{(6,0,2)}) 2_0
+(Sq^{(15,1)}+Sq^{(11,0,1)}+Sq^{(0,1,0,1)}) 2_1
+(Sq^{(11,2)}+Sq^{(8,3)}+Sq^{(2,5)}+Sq^{(3,0,2)}) 2_2
+(Sq^{14}+Sq^{(2,4)}+Sq^{(0,0,2)}) 2_3
+(Sq^{13}+Sq^{(7,2)}) 2_4
+Sq^{12} 2_5
+(Sq^{6}+Sq^{(0,2)}) 2_6
$

\item[24:] $d(3_{10}) =
(Sq^{22}+Sq^{(13,3)}+Sq^{(10,4)}+Sq^{(5,1,2)}) 2_0
+(Sq^{20}+Sq^{(10,1,1)}) 2_1
+(Sq^{(7,4)}+Sq^{(2,1,2)}) 2_2
+(Sq^{16}+Sq^{(1,5)}+Sq^{(2,0,2)}) 2_3
+(Sq^{(3,4)}+Sq^{(0,5)}) 2_4
+(Sq^{(11,1)}+Sq^{(7,0,1)}) 2_5
+Sq^{8} 2_6
+Sq^{7} 2_7
+Sq^{6} 2_8
+Sq^{4} 2_9
$

\item[33:] $d(3_{11}) =
(Sq^{31}+Sq^{(19,4)}+Sq^{(16,5)}+Sq^{(13,6)}+Sq^{(10,7)}+Sq^{(4,9)}+Sq^{(1,10)}+Sq^{(17,0,2)}+Sq^{(14,1,2)}+Sq^{(11,2,2)}+Sq^{(2,5,2)}+Sq^{(1,3,3)}+Sq^{(3,0,4)}) 2_0
+(Sq^{29}+Sq^{(23,2)}+Sq^{(14,5)}+Sq^{(19,1,1)}+Sq^{(10,4,1)}+Sq^{(4,6,1)}+Sq^{(15,0,2)}+Sq^{(3,4,2)}+Sq^{(14,0,0,1)}+Sq^{(2,4,0,1)}) 2_1
+(Sq^{(16,4)}+Sq^{(1,9)}+Sq^{(11,1,2)}+Sq^{(8,2,2)}+Sq^{(4,1,3)}+Sq^{(1,2,3)}+Sq^{(0,0,4)}) 2_2
+(Sq^{25}+Sq^{(16,3)}+Sq^{(13,4)}+Sq^{(7,6)}+Sq^{(1,8)}+Sq^{(12,2,1)}+Sq^{(8,1,2)}) 2_3
+(Sq^{24}+Sq^{(18,2)}+Sq^{(12,4)}+Sq^{(9,5)}+Sq^{(3,7)}) 2_4
+(Sq^{23}+Sq^{(20,1)}+Sq^{(5,6)}+Sq^{(16,0,1)}+Sq^{(1,5,1)}) 2_5
+(Sq^{(11,2)}+Sq^{(2,5)}) 2_6
+(Sq^{16}+Sq^{(10,2)}+Sq^{(7,3)}+Sq^{(4,4)}+Sq^{(2,0,2)}) 2_7
+(Sq^{(12,1)}+Sq^{(3,4)}+Sq^{(0,5)}+Sq^{(8,0,1)}+Sq^{(0,0,0,1)}) 2_8
+Sq^{13} 2_9
+Sq^{1} 2_{10}
$

\item[34:] $d(3_{12}) =
(Sq^{(26,2)}+Sq^{(8,8)}+Sq^{(5,9)}+Sq^{(2,10)}+Sq^{(18,0,2)}+Sq^{(15,1,2)}+Sq^{(12,2,2)}+Sq^{(11,0,3)}+Sq^{(2,0,0,2)}) 2_0
+(Sq^{(18,4)}+Sq^{(12,6)}+Sq^{(9,7)}+Sq^{(6,8)}+Sq^{(0,10)}+Sq^{(23,0,1)}+Sq^{(20,1,1)}+Sq^{(8,5,1)}+Sq^{(5,6,1)}+Sq^{(7,3,2)}+Sq^{(4,4,2)}+Sq^{(2,0,4)}+Sq^{(12,1,0,1)}+Sq^{(0,0,0,2)}) 2_1
+(Sq^{(17,4)}+Sq^{(8,7)}+Sq^{(2,9)}+Sq^{(15,0,2)}+Sq^{(12,1,2)}+Sq^{(3,4,2)}+Sq^{(0,5,2)}+Sq^{(5,1,3)}+Sq^{(2,2,3)}+Sq^{(1,0,4)}) 2_2
+(Sq^{(23,1)}+Sq^{(17,3)}+Sq^{(14,4)}+Sq^{(11,5)}+Sq^{(8,6)}+Sq^{(7,4,1)}+Sq^{(12,0,2)}+Sq^{(9,1,2)}+Sq^{(0,4,2)}) 2_3
+(Sq^{25}+Sq^{(19,2)}+Sq^{(10,5)}+Sq^{(7,6)}+Sq^{(1,8)}) 2_4
+(Sq^{(21,1)}+Sq^{(0,8)}+Sq^{(17,0,1)}+Sq^{(5,4,1)}+Sq^{(2,5,1)}) 2_5
+(Sq^{18}+Sq^{(12,2)}+Sq^{(0,6)}) 2_6
+(Sq^{17}+Sq^{(11,2)}+Sq^{(3,0,2)}) 2_7
+(Sq^{16}+Sq^{(4,4)}+Sq^{(1,5)}) 2_8
+(Sq^{14}+Sq^{(8,2)}+Sq^{(0,0,2)}) 2_9
+Sq^{2} 2_{10}
$

\item[34:] $d(3_{13}) =
(Sq^{32}+Sq^{(26,2)}+Sq^{(20,4)}+Sq^{(17,5)}+Sq^{(11,7)}+Sq^{(5,9)}+Sq^{(2,10)}+Sq^{(18,0,2)}+Sq^{(15,1,2)}+Sq^{(9,3,2)}+Sq^{(3,5,2)}+Sq^{(0,6,2)}+Sq^{(11,0,3)}+Sq^{(8,1,3)}+Sq^{(5,2,3)}+Sq^{(2,3,3)}+Sq^{(4,0,4)}+Sq^{(1,1,4)}) 2_0
+(Sq^{30}+Sq^{(24,2)}+Sq^{(18,4)}+Sq^{(15,5)}+Sq^{(9,7)}+Sq^{(6,8)}+Sq^{(3,9)}+Sq^{(0,10)}+Sq^{(23,0,1)}+Sq^{(20,1,1)}+Sq^{(8,5,1)}+Sq^{(2,7,1)}+Sq^{(16,0,2)}+Sq^{(10,2,2)}+Sq^{(7,3,2)}+Sq^{(12,1,0,1)}+Sq^{(8,0,1,1)}) 2_1
+(Sq^{(14,5)}+Sq^{(5,8)}+Sq^{(2,9)}+Sq^{(15,0,2)}+Sq^{(9,2,2)}+Sq^{(6,3,2)}+Sq^{(3,4,2)}+Sq^{(2,2,3)}+Sq^{(1,0,4)}) 2_2
+(Sq^{(17,3)}+Sq^{(14,4)}+Sq^{(13,2,1)}+Sq^{(7,4,1)}+Sq^{(4,5,1)}+Sq^{(12,0,2)}+Sq^{(9,1,2)}+Sq^{(0,4,2)}) 2_3
+Sq^{1} 2_{11}
$

\item[36:] $d(3_{14}) =
(Sq^{(22,4)}+Sq^{(13,7)}+Sq^{(10,8)}+Sq^{(1,11)}+Sq^{(14,2,2)}+Sq^{(11,3,2)}+Sq^{(10,1,3)}+Sq^{(4,3,3)}+Sq^{(6,0,4)}+Sq^{(3,1,4)}) 2_0
+(Sq^{32}+Sq^{(20,4)}+Sq^{(11,7)}+Sq^{(8,8)}+Sq^{(5,9)}+Sq^{(2,10)}+Sq^{(22,1,1)}+Sq^{(13,4,1)}+Sq^{(10,5,1)}+Sq^{(4,7,1)}+Sq^{(1,8,1)}+Sq^{(12,2,2)}+Sq^{(6,4,2)}+Sq^{(0,6,2)}+Sq^{(4,0,4)}+Sq^{(17,0,0,1)}+Sq^{(2,5,0,1)}) 2_1
+(Sq^{(22,3)}+Sq^{(16,5)}+Sq^{(13,6)}+Sq^{(7,8)}+Sq^{(14,1,2)}+Sq^{(8,3,2)}+Sq^{(10,0,3)}+Sq^{(7,1,3)}+Sq^{(4,2,3)}+Sq^{(3,0,4)}) 2_2
+(Sq^{28}+Sq^{(25,1)}+Sq^{(22,2)}+Sq^{(16,4)}+Sq^{(4,8)}+Sq^{(1,9)}+Sq^{(15,2,1)}+Sq^{(9,4,1)}+Sq^{(6,5,1)}+Sq^{(14,0,2)}+Sq^{(8,2,2)}+Sq^{(0,0,4)}) 2_3
+Sq^{27} 2_4
+Sq^{3} 2_{11}
+Sq^{2} 2_{12}
$

\item[37:] $d(3_{15}) =
(Sq^{(29,2)}+Sq^{(23,4)}+Sq^{(20,5)}+Sq^{(17,6)}+Sq^{(11,8)}+Sq^{(5,10)}+Sq^{(18,1,2)}+Sq^{(15,2,2)}+Sq^{(9,4,2)}+Sq^{(6,5,2)}+Sq^{(3,6,2)}+Sq^{(14,0,3)}+Sq^{(5,3,3)}+Sq^{(7,0,4)}+Sq^{(4,1,4)}+Sq^{(2,1,0,2)}) 2_0
+(Sq^{33}+Sq^{(30,1)}+Sq^{(21,4)}+Sq^{(18,5)}+Sq^{(15,6)}+Sq^{(12,7)}+Sq^{(9,8)}+Sq^{(0,11)}+Sq^{(14,4,1)}+Sq^{(8,6,1)}+Sq^{(2,8,1)}+Sq^{(10,3,2)}+Sq^{(1,6,2)}+Sq^{(2,1,4)}+Sq^{(0,6,0,1)}+Sq^{(11,0,1,1)}+Sq^{(3,0,0,2)}) 2_1
+(Sq^{32}+Sq^{(23,3)}+Sq^{(17,5)}+Sq^{(14,6)}+Sq^{(8,8)}+Sq^{(5,9)}+Sq^{(15,1,2)}+Sq^{(12,2,2)}+Sq^{(9,3,2)}+Sq^{(3,5,2)}+Sq^{(0,6,2)}+Sq^{(11,0,3)}+Sq^{(8,1,3)}+Sq^{(5,2,3)}+Sq^{(4,0,4)}) 2_2
+(Sq^{(23,2)}+Sq^{(14,5)}+Sq^{(2,9)}+Sq^{(10,4,1)}+Sq^{(7,5,1)}+Sq^{(15,0,2)}+Sq^{(12,1,2)}+Sq^{(1,0,4)}) 2_3
+(Sq^{(22,2)}+Sq^{(19,3)}+Sq^{(16,4)}+Sq^{(10,6)}+Sq^{(7,7)}+Sq^{(4,8)}+Sq^{(1,9)}+Sq^{(14,0,2)}+Sq^{(11,1,2)}+Sq^{(8,2,2)}+Sq^{(2,4,2)}+Sq^{(1,2,3)}+Sq^{(0,0,4)}) 2_4
+(Sq^{(24,1)}+Sq^{(15,4)}+Sq^{(12,5)}+Sq^{(9,6)}+Sq^{(6,7)}+Sq^{(3,8)}+Sq^{(0,9)}+Sq^{(20,0,1)}+Sq^{(8,4,1)}+Sq^{(2,6,1)}+Sq^{(12,0,0,1)}+Sq^{(0,4,0,1)}) 2_5
+Sq^{21} 2_6
+Sq^{4} 2_{11}
+(Sq^{(0,1)}) 2_{12}
+Sq^{1} 2_{13}
$

\item[40:] $d(3_{16}) =
(Sq^{(29,3)}+Sq^{(26,4)}+Sq^{(20,6)}+Sq^{(17,7)}+Sq^{(5,11)}+Sq^{(2,12)}+Sq^{(6,6,2)}+Sq^{(3,7,2)}+Sq^{(8,3,3)}+Sq^{(2,5,3)}+Sq^{(7,1,4)}+Sq^{(4,2,4)}+Sq^{(1,3,4)}+Sq^{(3,0,5)}+Sq^{(0,1,5)}) 2_0
+(Sq^{36}+Sq^{(30,2)}+Sq^{(15,7)}+Sq^{(6,10)}+Sq^{(8,7,1)}+Sq^{(2,9,1)}+Sq^{(4,6,2)}+Sq^{(21,0,0,1)}+Sq^{(6,5,0,1)}+Sq^{(14,0,1,1)}+Sq^{(2,4,1,1)}) 2_1
+(Sq^{(26,3)}+Sq^{(23,4)}+Sq^{(14,7)}+Sq^{(3,6,2)}+Sq^{(0,7,2)}+Sq^{(14,0,3)}+Sq^{(8,2,3)}+Sq^{(2,4,3)}+Sq^{(7,0,4)}+Sq^{(4,1,4)}) 2_2
+(Sq^{32}+Sq^{(29,1)}+Sq^{(8,8)}+Sq^{(10,5,1)}+Sq^{(12,2,2)}+Sq^{(6,4,2)}+Sq^{(11,0,3)}+Sq^{(3,0,2,1)}) 2_3
+(Sq^{(22,3)}+Sq^{(19,4)}+Sq^{(16,5)}+Sq^{(10,7)}+Sq^{(4,9)}+Sq^{(11,2,2)}) 2_4
+(Sq^{30}+Sq^{(9,7)}+Sq^{(6,8)}+Sq^{(23,0,1)}+Sq^{(11,4,1)}+Sq^{(5,6,1)}+Sq^{(2,7,1)}+Sq^{(15,0,0,1)}) 2_5
+(Sq^{24}+Sq^{(18,2)}+Sq^{(6,6)}+Sq^{(0,8)}) 2_6
+Sq^{7} 2_{11}
+Sq^{6} 2_{12}
+Sq^{4} 2_{13}
$

\item[41:] $d(3_{17}) =
(Sq^{(30,3)}+Sq^{(21,6)}+Sq^{(18,7)}+Sq^{(12,9)}+Sq^{(9,10)}+Sq^{(25,0,2)}+Sq^{(19,2,2)}+Sq^{(7,6,2)}+Sq^{(4,7,2)}+Sq^{(1,8,2)}+Sq^{(15,1,3)}+Sq^{(0,6,3)}+Sq^{(5,2,4)}+Sq^{(2,3,4)}+Sq^{(4,0,5)}+Sq^{(9,0,0,2)}+Sq^{(3,2,0,2)}+Sq^{(2,0,1,2)}) 2_0
+(Sq^{(34,1)}+Sq^{(19,6)}+Sq^{(16,7)}+Sq^{(13,8)}+Sq^{(27,1,1)}+Sq^{(18,4,1)}+Sq^{(9,7,1)}+Sq^{(0,10,1)}+Sq^{(14,3,2)}+Sq^{(11,4,2)}+Sq^{(8,5,2)}+Sq^{(2,7,2)}+Sq^{(6,1,4)}+Sq^{(0,3,4)}+Sq^{(19,1,0,1)}+Sq^{(10,4,0,1)}+Sq^{(7,5,0,1)}+Sq^{(15,0,1,1)}) 2_1
+(Sq^{(30,2)}+Sq^{(0,12)}+Sq^{(19,1,2)}+Sq^{(16,2,2)}+Sq^{(13,3,2)}+Sq^{(4,6,2)}+Sq^{(1,7,2)}+Sq^{(15,0,3)}+Sq^{(12,1,3)}+Sq^{(9,2,3)}+Sq^{(0,2,0,2)}) 2_2
+(Sq^{33}+Sq^{(30,1)}+Sq^{(21,4)}+Sq^{(18,5)}+Sq^{(3,10)}+Sq^{(0,11)}+Sq^{(11,5,1)}+Sq^{(13,2,2)}+Sq^{(10,3,2)}+Sq^{(3,0,0,2)}) 2_3
+(Sq^{32}+Sq^{(23,3)}+Sq^{(20,4)}+Sq^{(14,6)}+Sq^{(11,7)}+Sq^{(8,8)}+Sq^{(5,9)}+Sq^{(18,0,2)}+Sq^{(15,1,2)}+Sq^{(12,2,2)}+Sq^{(6,4,2)}+Sq^{(3,5,2)}+Sq^{(0,6,2)}+Sq^{(5,2,3)}+Sq^{(2,3,3)}+Sq^{(4,0,4)}+Sq^{(2,0,0,2)}) 2_4
+(Sq^{(19,4)}+Sq^{(16,5)}+Sq^{(13,6)}+Sq^{(10,7)}+Sq^{(4,9)}+Sq^{(1,10)}+Sq^{(24,0,1)}+Sq^{(12,4,1)}+Sq^{(6,6,1)}+Sq^{(3,7,1)}+Sq^{(3,0,4)}+Sq^{(0,1,4)}+Sq^{(16,0,0,1)}+Sq^{(4,4,0,1)}+Sq^{(0,0,0,0,1)}) 2_5
+(Sq^{(19,2)}+Sq^{(16,3)}+Sq^{(7,6)}+Sq^{(1,8)}) 2_6
+(Sq^{(6,6)}+Sq^{(0,8)}+Sq^{(3,0,3)}) 2_7
+(Sq^{(4,4,1)}+Sq^{(1,5,1)}) 2_8
+(Sq^{(15,2)}+Sq^{(12,3)}+Sq^{(0,0,3)}) 2_9
+(Sq^{8}+Sq^{(2,2)}) 2_{11}
+(Sq^{7}+Sq^{(4,1)}+Sq^{(0,0,1)}) 2_{12}
+Sq^{1} 2_{14}
$

\item[42:] $d(3_{18}) =
(Sq^{40}+Sq^{(34,2)}+Sq^{(31,3)}+Sq^{(25,5)}+Sq^{(22,6)}+Sq^{(19,7)}+Sq^{(16,8)}+Sq^{(10,10)}+Sq^{(4,12)}+Sq^{(1,13)}+Sq^{(11,5,2)}+Sq^{(8,6,2)}+Sq^{(19,0,3)}+Sq^{(13,2,3)}+Sq^{(10,3,3)}+Sq^{(4,5,3)}+Sq^{(1,6,3)}+Sq^{(3,3,4)}+Sq^{(0,4,4)}+Sq^{(10,0,0,2)}+Sq^{(1,3,0,2)}+Sq^{(0,1,1,2)}) 2_0
+(Sq^{38}+Sq^{(35,1)}+Sq^{(23,5)}+Sq^{(20,6)}+Sq^{(17,7)}+Sq^{(14,8)}+Sq^{(5,11)}+Sq^{(28,1,1)}+Sq^{(13,6,1)}+Sq^{(10,7,1)}+Sq^{(24,0,2)}+Sq^{(12,4,2)}+Sq^{(10,0,4)}+Sq^{(7,1,4)}+Sq^{(1,3,4)}+Sq^{(3,0,5)}+Sq^{(23,0,0,1)}+Sq^{(8,5,0,1)}+Sq^{(5,6,0,1)}+Sq^{(16,0,1,1)}+Sq^{(1,5,1,1)}+Sq^{(8,0,0,2)}+Sq^{(2,2,0,2)}+Sq^{(4,1,0,0,1)}) 2_1
+(Sq^{(28,3)}+Sq^{(25,4)}+Sq^{(22,5)}+Sq^{(19,6)}+Sq^{(10,9)}+Sq^{(17,2,2)}+Sq^{(11,4,2)}+Sq^{(10,2,3)}+Sq^{(4,4,3)}+Sq^{(1,5,3)}+Sq^{(3,2,4)}+Sq^{(2,0,5)}+Sq^{(7,0,0,2)}+Sq^{(1,2,0,2)}+Sq^{(0,0,1,2)}) 2_2
+(Sq^{34}+Sq^{(28,2)}+Sq^{(22,4)}+Sq^{(19,5)}+Sq^{(16,6)}+Sq^{(10,8)}+Sq^{(1,11)}+Sq^{(15,4,1)}+Sq^{(3,8,1)}+Sq^{(0,9,1)}+Sq^{(14,2,2)}+Sq^{(11,3,2)}+Sq^{(8,4,2)}+Sq^{(0,2,4)}+Sq^{(4,0,0,2)}) 2_3
+(Sq^{33}+Sq^{(15,6)}+Sq^{(9,8)}+Sq^{(19,0,2)}+Sq^{(13,2,2)}) 2_4
+(Sq^{32}+Sq^{(29,1)}+Sq^{(17,5)}+Sq^{(11,7)}+Sq^{(8,8)}+Sq^{(25,0,1)}+Sq^{(13,4,1)}+Sq^{(17,0,0,1)}) 2_5
+(Sq^{26}+Sq^{(20,2)}) 2_6
+(Sq^{25}+Sq^{(13,4)}+Sq^{(4,7)}+Sq^{(11,0,2)}+Sq^{(5,2,2)}+Sq^{(2,3,2)}) 2_7
+(Sq^{24}+Sq^{(12,4)}+Sq^{(9,5)}+Sq^{(6,6)}+Sq^{(3,7)}+Sq^{(0,8)}+Sq^{(17,0,1)}+Sq^{(5,4,1)}+Sq^{(2,5,1)}) 2_8
+(Sq^{(16,2)}+Sq^{(13,3)}+Sq^{(8,0,2)}) 2_9
+Sq^{10} 2_{10}
+(Sq^{9}+Sq^{(3,2)}) 2_{11}
+(Sq^{8}+Sq^{(5,1)}) 2_{12}
+(Sq^{(0,2)}) 2_{13}
+Sq^{2} 2_{14}
$

\item[44:] $d(3_{19}) =
(Sq^{(24,6)}+Sq^{(15,9)}+Sq^{(12,10)}+Sq^{(9,11)}+Sq^{(0,14)}+Sq^{(25,1,2)}+Sq^{(13,5,2)}+Sq^{(10,6,2)}+Sq^{(21,0,3)}+Sq^{(18,1,3)}+Sq^{(9,4,3)}+Sq^{(0,7,3)}+Sq^{(2,4,4)}+Sq^{(1,2,5)}+Sq^{(9,1,0,2)}+Sq^{(6,2,0,2)}+Sq^{(3,3,0,2)}+Sq^{(2,1,1,2)}) 2_0
+(Sq^{(28,4)}+Sq^{(1,13)}+Sq^{(33,0,1)}+Sq^{(30,1,1)}+Sq^{(18,5,1)}+Sq^{(6,9,1)}+Sq^{(3,10,1)}+Sq^{(0,11,1)}+Sq^{(20,2,2)}+Sq^{(17,3,2)}+Sq^{(14,4,2)}+Sq^{(8,6,2)}+Sq^{(5,7,2)}+Sq^{(2,8,2)}+Sq^{(12,0,4)}+Sq^{(9,1,4)}+Sq^{(4,7,0,1)}+Sq^{(18,0,1,1)}+Sq^{(6,4,1,1)}+Sq^{(10,0,0,2)}+Sq^{(4,2,0,2)}+Sq^{(9,0,0,0,1)}) 2_1
+(Sq^{39}+Sq^{(30,3)}+Sq^{(27,4)}+Sq^{(21,6)}+Sq^{(18,7)}+Sq^{(15,8)}+Sq^{(12,9)}+Sq^{(6,11)}+Sq^{(4,7,2)}+Sq^{(12,2,3)}+Sq^{(9,3,3)}+Sq^{(0,6,3)}+Sq^{(6,1,0,2)}+Sq^{(3,2,0,2)}) 2_2
+(Sq^{(33,1)}+Sq^{(30,2)}+Sq^{(21,5)}+Sq^{(3,11)}+Sq^{(23,2,1)}+Sq^{(17,4,1)}+Sq^{(2,9,1)}+Sq^{(22,0,2)}+Sq^{(13,3,2)}+Sq^{(15,0,3)}+Sq^{(9,2,3)}+Sq^{(5,1,4)}+Sq^{(0,2,0,2)}) 2_3
+(Sq^{(23,4)}+Sq^{(17,6)}+Sq^{(11,8)}+Sq^{(8,9)}+Sq^{(5,10)}+Sq^{(2,11)}+Sq^{(15,2,2)}+Sq^{(4,1,4)}) 2_4
+(Sq^{(22,4)}+Sq^{(16,6)}+Sq^{(13,7)}+Sq^{(7,9)}+Sq^{(4,10)}+Sq^{(27,0,1)}+Sq^{(15,4,1)}+Sq^{(12,5,1)}+Sq^{(6,7,1)}+Sq^{(6,0,4)}+Sq^{(19,0,0,1)}) 2_5
+(Sq^{28}+Sq^{(4,8)}+Sq^{(0,0,4)}) 2_6
+(Sq^{27}+Sq^{(12,5)}) 2_7
+(Sq^{26}+Sq^{(23,1)}+Sq^{(14,4)}) 2_8
+Sq^{24} 2_9
+(Sq^{12}+Sq^{(0,4)}) 2_{10}
$

\end{itemize}

\subsection{Homological degree 4}

Complete through degree $t=44$.

\begin{itemize}
\addtolength{\itemsep}{2.4ex}

\item[4:] $d(4_{0}) =
Sq^{1}  3_{0}
$

\item[11:] $d(4_{1}) =
Sq^{8}  3_{0}
+(Sq^{5}+Sq^{(2,1)})  3_{1}
+Sq^{1}  3_{2}
$

\item[13:] $d(4_{2}) =
(Sq^{10}+Sq^{(4,2)})  3_{0}
+(Sq^{7}+Sq^{(1,2)}+Sq^{(0,0,1)})  3_{1}
+Sq^{2}  3_{3}
$

\item[18:] $d(4_{3}) =
(Sq^{15}+Sq^{(9,2)}+Sq^{(6,3)}+Sq^{(0,5)})  3_{0}
+(Sq^{(9,1)}+Sq^{(6,2)})  3_{1}
+(Sq^{7}+Sq^{(4,1)}+Sq^{(0,0,1)})  3_{3}
+Sq^{(3,1)}  3_{4}
$

\item[19:] $d(4_{4}) =
(Sq^{16}+Sq^{(7,3)}+Sq^{(4,4)}+Sq^{(1,5)})  3_{0}
+(Sq^{13}+Sq^{(10,1)}+Sq^{(7,2)}+Sq^{(4,3)}+Sq^{(1,4)}+Sq^{(6,0,1)}+Sq^{(0,2,1)})  3_{1}
+Sq^{1}  3_{6}
$

\item[21:] $d(4_{5}) =
(Sq^{(9,3)}+Sq^{(4,0,2)})  3_{0}
+(Sq^{(12,1)}+Sq^{(1,0,2)}+Sq^{(0,0,0,1)})  3_{1}
+Sq^{11}  3_{2}
+(Sq^{10}+Sq^{(7,1)})  3_{3}
+(Sq^{(6,1)}+Sq^{(3,2)}+Sq^{(0,3)}+Sq^{(2,0,1)})  3_{4}
+Sq^{(1,1)}  3_{5}
$

\item[22:] $d(4_{6}) =
(Sq^{(10,3)}+Sq^{(4,5)}+Sq^{(1,6)}+Sq^{(5,0,2)})  3_{0}
+(Sq^{(7,3)}+Sq^{(4,4)}+Sq^{(9,0,1)}+Sq^{(6,1,1)}+Sq^{(3,2,1)}+Sq^{(1,0,0,1)})  3_{1}
+Sq^{12}  3_{2}
+(Sq^{11}+Sq^{(4,0,1)})  3_{3}
+(Sq^{(7,1)}+Sq^{(0,1,1)})  3_{4}
+(Sq^{5}+Sq^{(2,1)})  3_{5}
$

\item[22:] $d(4_{7}) =
(Sq^{19}+Sq^{(13,2)}+Sq^{(10,3)}+Sq^{(7,4)}+Sq^{(5,0,2)}+Sq^{(2,1,2)})  3_{0}
+(Sq^{16}+Sq^{(4,4)}+Sq^{(9,0,1)}+Sq^{(1,0,0,1)})  3_{1}
+(Sq^{12}+Sq^{(6,2)}+Sq^{(3,3)}+Sq^{(0,4)})  3_{2}
+Sq^{(4,0,1)}  3_{3}
+(Sq^{10}+Sq^{(4,2)}+Sq^{(1,3)}+Sq^{(0,1,1)})  3_{4}
+Sq^{4}  3_{6}
+Sq^{2}  3_{7}
+Sq^{1}  3_{8}
$

\item[24:] $d(4_{8}) =
(Sq^{(3,6)}+Sq^{(0,7)}+Sq^{(4,1,2)})  3_{0}
+(Sq^{(3,5)}+Sq^{(0,6)}+Sq^{(11,0,1)}+Sq^{(8,1,1)}+Sq^{(5,2,1)}+Sq^{(1,1,2)}+Sq^{(0,1,0,1)})  3_{1}
+(Sq^{(8,2)}+Sq^{(5,3)})  3_{2}
+Sq^{(10,1)}  3_{3}
+(Sq^{(6,2)}+Sq^{(5,0,1)})  3_{4}
+(Sq^{(1,2)}+Sq^{(0,0,1)})  3_{5}
$

\item[26:] $d(4_{9}) =
(Sq^{(14,3)}+Sq^{(8,5)}+Sq^{(5,6)}+Sq^{(2,7)}+Sq^{(9,0,2)}+Sq^{(2,0,3)})  3_{0}
+(Sq^{(8,4)}+Sq^{(5,5)}+Sq^{(2,6)}+Sq^{(13,0,1)}+Sq^{(7,2,1)}+Sq^{(1,4,1)}+Sq^{(3,1,2)}+Sq^{(5,0,0,1)})  3_{1}
+(Sq^{(10,2)}+Sq^{(4,4)}+Sq^{(1,5)}+Sq^{(2,0,2)})  3_{2}
+(Sq^{15}+Sq^{(12,1)}+Sq^{(3,4)})  3_{3}
+(Sq^{(11,1)}+Sq^{(8,2)}+Sq^{(2,4)}+Sq^{(0,0,2)})  3_{4}
+(Sq^{9}+Sq^{(6,1)}+Sq^{(0,3)})  3_{5}
+(Sq^{4}+Sq^{(1,1)})  3_{9}
$

\item[27:] $d(4_{10}) =
(Sq^{(18,2)}+Sq^{(9,5)}+Sq^{(0,8)}+Sq^{(10,0,2)}+Sq^{(7,1,2)}+Sq^{(4,2,2)}+Sq^{(1,3,2)})  3_{0}
+(Sq^{(15,2)}+Sq^{(9,4)}+Sq^{(0,7)}+Sq^{(8,2,1)}+Sq^{(2,4,1)}+Sq^{(7,0,2)}+Sq^{(4,1,2)}+Sq^{(1,2,2)}+Sq^{(0,0,3)}+Sq^{(6,0,0,1)}+Sq^{(0,2,0,1)})  3_{1}
+(Sq^{(11,2)}+Sq^{(5,4)})  3_{2}
+(Sq^{16}+Sq^{(4,4)})  3_{3}
+(Sq^{15}+Sq^{(9,2)}+Sq^{(6,3)}+Sq^{(3,4)}+Sq^{(5,1,1)}+Sq^{(1,0,2)}+Sq^{(0,0,0,1)})  3_{4}
+Sq^{10}  3_{5}
+(Sq^{9}+Sq^{(3,2)})  3_{6}
+Sq^{(0,0,1)}  3_{7}
+Sq^{6}  3_{8}
+Sq^{(2,1)}  3_{9}
+(Sq^{3}+Sq^{(0,1)})  3_{10}
$

\item[34:] $d(4_{11}) =
(Sq^{31}+Sq^{(25,2)}+Sq^{(13,6)}+Sq^{(10,7)}+Sq^{(17,0,2)}+Sq^{(14,1,2)}+Sq^{(11,2,2)}+Sq^{(8,3,2)}+Sq^{(4,2,3)}+Sq^{(1,3,3)}+Sq^{(3,0,4)}+Sq^{(0,1,4)})  3_{0}
+(Sq^{(10,6)}+Sq^{(1,9)}+Sq^{(9,4,1)}+Sq^{(3,6,1)}+Sq^{(14,0,2)}+Sq^{(11,1,2)}+Sq^{(8,2,2)}+Sq^{(5,3,2)}+Sq^{(7,0,3)}+Sq^{(1,2,3)}+Sq^{(13,0,0,1)}+Sq^{(10,1,0,1)}+Sq^{(1,4,0,1)})  3_{1}
+(Sq^{(12,4)}+Sq^{(9,5)}+Sq^{(6,6)}+Sq^{(0,8)}+Sq^{(10,0,2)}+Sq^{(4,2,2)})  3_{2}
+(Sq^{(11,4)}+Sq^{(8,5)}+Sq^{(4,4,1)})  3_{3}
+(Sq^{(4,6)}+Sq^{(1,7)}+Sq^{(12,1,1)}+Sq^{(0,5,1)}+Sq^{(8,0,2)}+Sq^{(1,0,3)}+Sq^{(7,0,0,1)}+Sq^{(4,1,0,1)})  3_{4}
+(Sq^{17}+Sq^{(14,1)}+Sq^{(8,3)}+Sq^{(5,4)}+Sq^{(0,1,2)})  3_{5}
+(Sq^{16}+Sq^{(10,2)}+Sq^{(7,3)}+Sq^{(4,4)}+Sq^{(1,5)}+Sq^{(2,0,2)})  3_{6}
+(Sq^{14}+Sq^{(11,1)}+Sq^{(8,2)}+Sq^{(2,4)}+Sq^{(7,0,1)}+Sq^{(4,1,1)}+Sq^{(0,0,2)})  3_{7}
+(Sq^{12}+Sq^{(9,1)}+Sq^{(0,4)})  3_{9}
+Sq^{1}  3_{11}
$

\item[35:] $d(4_{12}) =
(Sq^{32}+Sq^{(23,3)}+Sq^{(20,4)}+Sq^{(17,5)}+Sq^{(11,7)}+Sq^{(8,8)}+Sq^{(15,1,2)}+Sq^{(12,2,2)}+Sq^{(3,5,2)}+Sq^{(0,6,2)}+Sq^{(11,0,3)}+Sq^{(5,2,3)}+Sq^{(2,3,3)}+Sq^{(4,0,4)}+Sq^{(1,1,4)})  3_{0}
+(Sq^{29}+Sq^{(26,1)}+Sq^{(23,2)}+Sq^{(20,3)}+Sq^{(17,4)}+Sq^{(14,5)}+Sq^{(11,6)}+Sq^{(8,7)}+Sq^{(2,9)}+Sq^{(22,0,1)}+Sq^{(16,2,1)}+Sq^{(10,4,1)}+Sq^{(4,6,1)}+Sq^{(1,7,1)}+Sq^{(15,0,2)}+Sq^{(12,1,2)}+Sq^{(9,2,2)}+Sq^{(6,3,2)}+Sq^{(3,4,2)}+Sq^{(0,5,2)}+Sq^{(8,0,3)}+Sq^{(2,2,3)}+Sq^{(1,0,4)}+Sq^{(14,0,0,1)}+Sq^{(8,2,0,1)}+Sq^{(2,4,0,1)}+Sq^{(7,0,1,1)}+Sq^{(4,1,1,1)}+Sq^{(0,0,2,1)})  3_{1}
+(Sq^{(10,5)}+Sq^{(7,6)}+Sq^{(1,8)}+Sq^{(11,0,2)}+Sq^{(5,2,2)}+Sq^{(2,3,2)}+Sq^{(1,1,3)})  3_{2}
+Sq^{(5,4,1)}  3_{3}
+(Sq^{(14,3)}+Sq^{(11,4)}+Sq^{(5,6)}+Sq^{(2,7)}+Sq^{(13,1,1)}+Sq^{(1,5,1)}+Sq^{(9,0,2)}+Sq^{(6,1,2)}+Sq^{(3,2,2)}+Sq^{(5,1,0,1)}+Sq^{(1,0,1,1)})  3_{4}
+Sq^{(15,1)}  3_{5}
+Sq^{1}  3_{13}
$

\item[36:] $d(4_{13}) =
(Sq^{(9,8)}+Sq^{(19,0,2)}+Sq^{(16,1,2)}+Sq^{(13,2,2)}+Sq^{(10,3,2)}+Sq^{(1,6,2)}+Sq^{(12,0,3)}+Sq^{(9,1,3)}+Sq^{(0,4,3)})  3_{0}
+(Sq^{(27,1)}+Sq^{(21,3)}+Sq^{(9,7)}+Sq^{(23,0,1)}+Sq^{(20,1,1)}+Sq^{(5,6,1)}+Sq^{(10,2,2)}+Sq^{(1,5,2)}+Sq^{(9,0,3)}+Sq^{(3,2,3)}+Sq^{(2,0,4)}+Sq^{(15,0,0,1)}+Sq^{(3,4,0,1)}+Sq^{(8,0,1,1)})  3_{1}
+(Sq^{26}+Sq^{(17,3)}+Sq^{(11,5)}+Sq^{(8,6)}+Sq^{(5,7)}+Sq^{(2,8)}+Sq^{(12,0,2)}+Sq^{(6,2,2)}+Sq^{(2,1,3)})  3_{2}
+(Sq^{(22,1)}+Sq^{(13,4)}+Sq^{(10,5)}+Sq^{(7,6)}+Sq^{(4,7)}+Sq^{(18,0,1)}+Sq^{(6,4,1)}+Sq^{(3,5,1)}+Sq^{(0,6,1)})  3_{3}
+(Sq^{(21,1)}+Sq^{(18,2)}+Sq^{(15,3)}+Sq^{(12,4)}+Sq^{(14,1,1)}+Sq^{(5,4,1)}+Sq^{(1,3,2)}+Sq^{(9,0,0,1)}+Sq^{(3,2,0,1)})  3_{4}
+(Sq^{19}+Sq^{(16,1)}+Sq^{(13,2)}+Sq^{(10,3)}+Sq^{(4,5)}+Sq^{(1,6)}+Sq^{(5,0,2)})  3_{5}
+(Sq^{(6,4)}+Sq^{(3,5)}+Sq^{(1,1,2)})  3_{6}
+(Sq^{(1,5)}+Sq^{(2,0,2)})  3_{7}
+(Sq^{(6,3)}+Sq^{(3,4)}+Sq^{(0,5)})  3_{8}
+(Sq^{14}+Sq^{(11,1)}+Sq^{(8,2)}+Sq^{(1,2,1)}+Sq^{(0,0,2)})  3_{9}
+Sq^{(6,2)}  3_{10}
$

\item[37:] $d(4_{14}) =
(Sq^{(25,3)}+Sq^{(10,8)}+Sq^{(1,11)}+Sq^{(17,1,2)}+Sq^{(14,2,2)}+Sq^{(13,0,3)}+Sq^{(7,2,3)}+Sq^{(4,3,3)}+Sq^{(1,4,3)})  3_{0}
+(Sq^{31}+Sq^{(28,1)}+Sq^{(21,1,1)}+Sq^{(6,6,1)}+Sq^{(8,3,2)}+Sq^{(2,5,2)}+Sq^{(10,0,3)}+Sq^{(4,2,3)}+Sq^{(1,3,3)}+Sq^{(0,1,4)}+Sq^{(10,2,0,1)}+Sq^{(9,0,1,1)}+Sq^{(6,1,1,1)})  3_{1}
+(Sq^{27}+Sq^{(18,3)}+Sq^{(15,4)}+Sq^{(12,5)}+Sq^{(9,6)}+Sq^{(0,9)}+Sq^{(13,0,2)}+Sq^{(10,1,2)}+Sq^{(7,2,2)}+Sq^{(4,3,2)}+Sq^{(1,4,2)})  3_{2}
+(Sq^{(23,1)}+Sq^{(14,4)}+Sq^{(11,5)}+Sq^{(7,4,1)})  3_{3}
+(Sq^{(22,1)}+Sq^{(16,3)}+Sq^{(13,4)}+Sq^{(4,7)}+Sq^{(1,8)}+Sq^{(12,2,1)}+Sq^{(6,4,1)}+Sq^{(3,5,1)}+Sq^{(11,0,2)}+Sq^{(8,1,2)}+Sq^{(5,2,2)}+Sq^{(2,3,2)}+Sq^{(10,0,0,1)})  3_{4}
+(Sq^{(14,2)}+Sq^{(8,4)}+Sq^{(5,5)}+Sq^{(2,6)}+Sq^{(6,0,2)}+Sq^{(0,2,2)})  3_{5}
+(Sq^{19}+Sq^{(4,5)})  3_{6}
+(Sq^{17}+Sq^{(5,4)}+Sq^{(7,1,1)}+Sq^{(3,0,2)})  3_{7}
+(Sq^{(7,3)}+Sq^{(1,5)})  3_{8}
+(Sq^{(12,1)}+Sq^{(9,2)}+Sq^{(8,0,1)}+Sq^{(0,0,0,1)})  3_{9}
+(Sq^{13}+Sq^{(4,3)}+Sq^{(1,4)})  3_{10}
+Sq^{3}  3_{12}
$

\item[38:] $d(4_{15}) =
(Sq^{(29,2)}+Sq^{(14,7)}+Sq^{(8,9)}+Sq^{(5,10)}+Sq^{(18,1,2)}+Sq^{(12,3,2)}+Sq^{(6,5,2)}+Sq^{(0,7,2)}+Sq^{(8,2,3)}+Sq^{(2,4,3)}+Sq^{(7,0,4)}+Sq^{(4,1,4)}+Sq^{(0,0,5)}+Sq^{(2,1,0,2)})  3_{0}
+(Sq^{32}+Sq^{(29,1)}+Sq^{(20,4)}+Sq^{(14,6)}+Sq^{(11,7)}+Sq^{(8,8)}+Sq^{(5,9)}+Sq^{(2,10)}+Sq^{(25,0,1)}+Sq^{(19,2,1)}+Sq^{(13,4,1)}+Sq^{(10,5,1)}+Sq^{(7,6,1)}+Sq^{(4,7,1)}+Sq^{(12,2,2)}+Sq^{(9,3,2)}+Sq^{(3,5,2)}+Sq^{(0,6,2)}+Sq^{(11,0,3)}+Sq^{(2,3,3)}+Sq^{(4,0,4)}+Sq^{(14,1,0,1)}+Sq^{(11,2,0,1)}+Sq^{(5,4,0,1)}+Sq^{(2,5,0,1)})  3_{1}
+(Sq^{28}+Sq^{(22,2)}+Sq^{(19,3)}+Sq^{(16,4)}+Sq^{(7,7)}+Sq^{(5,3,2)}+Sq^{(2,4,2)}+Sq^{(4,1,3)}+Sq^{(0,0,4)})  3_{2}
+(Sq^{(24,1)}+Sq^{(9,6)}+Sq^{(6,7)}+Sq^{(3,8)}+Sq^{(20,0,1)}+Sq^{(8,4,1)}+Sq^{(5,5,1)}+Sq^{(12,0,0,1)}+Sq^{(0,4,0,1)})  3_{3}
+(Sq^{26}+Sq^{(23,1)}+Sq^{(20,2)}+Sq^{(17,3)}+Sq^{(5,7)}+Sq^{(2,8)}+Sq^{(13,2,1)}+Sq^{(7,4,1)}+Sq^{(4,5,1)}+Sq^{(1,6,1)}+Sq^{(6,2,2)}+Sq^{(0,4,2)}+Sq^{(2,1,3)}+Sq^{(11,0,0,1)}+Sq^{(8,1,0,1)}+Sq^{(4,0,1,1)})  3_{4}
+(Sq^{21}+Sq^{(18,1)})  3_{5}
+Sq^{4}  3_{13}
+Sq^{2}  3_{14}
+Sq^{1}  3_{15}
$

\item[42:] $d(4_{16}) =
(Sq^{(30,3)}+Sq^{(27,4)}+Sq^{(21,6)}+Sq^{(18,7)}+Sq^{(12,9)}+Sq^{(9,10)}+Sq^{(0,13)}+Sq^{(16,3,2)}+Sq^{(4,7,2)}+Sq^{(1,8,2)}+Sq^{(9,3,3)}+Sq^{(6,4,3)}+Sq^{(2,3,4)})  3_{0}
+(Sq^{(18,6)}+Sq^{(15,7)}+Sq^{(9,9)}+Sq^{(3,11)}+Sq^{(17,4,1)}+Sq^{(14,5,1)}+Sq^{(11,6,1)}+Sq^{(5,8,1)}+Sq^{(13,3,2)}+Sq^{(10,4,2)}+Sq^{(7,5,2)}+Sq^{(15,0,3)}+Sq^{(12,1,3)}+Sq^{(6,3,3)}+Sq^{(8,0,4)}+Sq^{(5,1,4)}+Sq^{(2,2,4)}+Sq^{(21,0,0,1)}+Sq^{(18,1,0,1)}+Sq^{(15,2,0,1)}+Sq^{(6,5,0,1)}+Sq^{(3,6,0,1)}+Sq^{(2,4,1,1)})  3_{1}
+(Sq^{(23,3)}+Sq^{(20,4)}+Sq^{(17,5)}+Sq^{(14,6)}+Sq^{(11,7)}+Sq^{(2,10)}+Sq^{(18,0,2)}+Sq^{(15,1,2)}+Sq^{(3,5,2)}+Sq^{(8,1,3)})  3_{2}
+(Sq^{(28,1)}+Sq^{(19,4)}+Sq^{(7,8)}+Sq^{(4,9)}+Sq^{(12,4,1)}+Sq^{(9,5,1)}+Sq^{(6,6,1)})  3_{3}
+(Sq^{30}+Sq^{(24,2)}+Sq^{(21,3)}+Sq^{(18,4)}+Sq^{(15,5)}+Sq^{(12,6)}+Sq^{(9,7)}+Sq^{(6,8)}+Sq^{(3,9)}+Sq^{(23,0,1)}+Sq^{(11,4,1)}+Sq^{(5,6,1)}+Sq^{(2,7,1)}+Sq^{(16,0,2)}+Sq^{(10,2,2)}+Sq^{(7,3,2)}+Sq^{(15,0,0,1)}+Sq^{(3,4,0,1)}+Sq^{(0,5,0,1)})  3_{4}
+(Sq^{(22,1)}+Sq^{(19,2)}+Sq^{(16,3)}+Sq^{(13,4)}+Sq^{(8,1,2)}+Sq^{(5,2,2)})  3_{5}
+(Sq^{24}+Sq^{(18,2)}+Sq^{(15,3)}+Sq^{(12,4)}+Sq^{(9,5)}+Sq^{(6,6)}+Sq^{(10,0,2)}+Sq^{(1,3,2)})  3_{6}
+(Sq^{22}+Sq^{(19,1)}+Sq^{(10,4)}+Sq^{(7,5)}+Sq^{(4,6)}+Sq^{(1,7)}+Sq^{(15,0,1)}+Sq^{(8,0,2)}+Sq^{(7,0,0,1)}+Sq^{(4,1,0,1)})  3_{7}
+(Sq^{(15,2)}+Sq^{(9,4)}+Sq^{(6,5)}+Sq^{(3,6)}+Sq^{(7,0,2)}+Sq^{(1,2,2)})  3_{8}
+(Sq^{(14,2)}+Sq^{(8,4)}+Sq^{(0,2,2)})  3_{9}
+(Sq^{(15,1)}+Sq^{(12,2)}+Sq^{(6,4)}+Sq^{(0,6)}+Sq^{(4,0,2)}+Sq^{(1,1,2)})  3_{10}
+(Sq^{9}+Sq^{(3,2)}+Sq^{(0,3)})  3_{11}
+(Sq^{(5,1)}+Sq^{(2,2)})  3_{12}
$

\item[42:] $d(4_{17}) =
(Sq^{39}+Sq^{(27,4)}+Sq^{(24,5)}+Sq^{(18,7)}+Sq^{(12,9)}+Sq^{(9,10)}+Sq^{(0,13)}+Sq^{(16,3,2)}+Sq^{(13,4,2)}+Sq^{(10,5,2)}+Sq^{(7,6,2)}+Sq^{(1,8,2)}+Sq^{(12,2,3)}+Sq^{(9,3,3)}+Sq^{(6,4,3)}+Sq^{(11,0,4)}+Sq^{(2,3,4)}+Sq^{(9,0,0,2)}+Sq^{(6,1,0,2)}+Sq^{(3,2,0,2)}+Sq^{(0,3,0,2)}+Sq^{(2,0,1,2)})  3_{0}
+(Sq^{(33,1)}+Sq^{(27,3)}+Sq^{(12,8)}+Sq^{(3,11)}+Sq^{(0,12)}+Sq^{(26,1,1)}+Sq^{(23,2,1)}+Sq^{(14,5,1)}+Sq^{(8,7,1)}+Sq^{(22,0,2)}+Sq^{(10,4,2)}+Sq^{(1,7,2)}+Sq^{(12,1,3)}+Sq^{(0,5,3)}+Sq^{(5,1,4)}+Sq^{(1,0,5)}+Sq^{(21,0,0,1)}+Sq^{(18,1,0,1)}+Sq^{(15,2,0,1)}+Sq^{(0,7,0,1)}+Sq^{(2,4,1,1)}+Sq^{(7,0,2,1)}+Sq^{(1,2,2,1)}+Sq^{(6,0,0,2)}+Sq^{(3,1,0,2)}+Sq^{(5,0,0,0,1)})  3_{1}
+(Sq^{32}+Sq^{(14,6)}+Sq^{(2,10)}+Sq^{(9,3,2)}+Sq^{(11,0,3)}+Sq^{(8,1,3)}+Sq^{(1,1,4)})  3_{2}
+(Sq^{31}+Sq^{(28,1)}+Sq^{(19,4)}+Sq^{(16,5)}+Sq^{(7,8)}+Sq^{(1,10)}+Sq^{(24,0,1)}+Sq^{(12,4,1)}+Sq^{(9,5,1)}+Sq^{(3,0,4)}+Sq^{(0,1,4)}+Sq^{(16,0,0,1)}+Sq^{(0,0,0,0,1)})  3_{3}
+(Sq^{(21,3)}+Sq^{(15,5)}+Sq^{(9,7)}+Sq^{(3,9)}+Sq^{(20,1,1)}+Sq^{(8,5,1)}+Sq^{(5,6,1)}+Sq^{(10,2,2)}+Sq^{(7,3,2)}+Sq^{(1,5,2)}+Sq^{(3,2,3)}+Sq^{(15,0,0,1)}+Sq^{(5,1,1,1)})  3_{4}
+(Sq^{(16,3)}+Sq^{(7,6)}+Sq^{(1,8)}+Sq^{(18,0,1)})  3_{5}
+(Sq^{24}+Sq^{(18,2)}+Sq^{(15,3)}+Sq^{(12,4)}+Sq^{(9,5)}+Sq^{(6,6)}+Sq^{(0,8)}+Sq^{(10,0,2)}+Sq^{(7,1,2)}+Sq^{(4,2,2)}+Sq^{(3,0,3)}+Sq^{(0,1,3)})  3_{6}
+(Sq^{22}+Sq^{(19,1)}+Sq^{(16,2)}+Sq^{(10,4)}+Sq^{(4,6)}+Sq^{(15,0,1)}+Sq^{(12,1,1)}+Sq^{(3,4,1)}+Sq^{(0,5,1)}+Sq^{(8,0,2)}+Sq^{(2,2,2)}+Sq^{(4,1,0,1)}+Sq^{(0,0,1,1)})  3_{7}
+Sq^{(9,4)}  3_{8}
+Sq^{(17,1)}  3_{9}
+(Sq^{(15,1)}+Sq^{(1,1,2)})  3_{10}
+(Sq^{9}+Sq^{(3,2)})  3_{11}
+(Sq^{(5,1)}+Sq^{(1,0,1)})  3_{12}
+(Sq^{8}+Sq^{(2,2)})  3_{13}
+(Sq^{6}+Sq^{(0,2)})  3_{14}
+Sq^{1}  3_{17}
$

\item[43:] $d(4_{18}) =
(Sq^{40}+Sq^{(31,3)}+Sq^{(28,4)}+Sq^{(25,5)}+Sq^{(13,9)}+Sq^{(1,13)}+Sq^{(26,0,2)}+Sq^{(20,2,2)}+Sq^{(11,5,2)}+Sq^{(8,6,2)}+Sq^{(5,7,2)}+Sq^{(2,8,2)}+Sq^{(19,0,3)}+Sq^{(10,3,3)}+Sq^{(4,5,3)}+Sq^{(1,6,3)}+Sq^{(6,2,4)}+Sq^{(0,4,4)}+Sq^{(10,0,0,2)}+Sq^{(7,1,0,2)}+Sq^{(1,3,0,2)}+Sq^{(3,0,1,2)})  3_{0}
+(Sq^{37}+Sq^{(34,1)}+Sq^{(31,2)}+Sq^{(28,3)}+Sq^{(25,4)}+Sq^{(19,6)}+Sq^{(16,7)}+Sq^{(13,8)}+Sq^{(7,10)}+Sq^{(1,12)}+Sq^{(30,0,1)}+Sq^{(18,4,1)}+Sq^{(9,7,1)}+Sq^{(3,9,1)}+Sq^{(0,10,1)}+Sq^{(20,1,2)}+Sq^{(17,2,2)}+Sq^{(14,3,2)}+Sq^{(5,6,2)}+Sq^{(2,7,2)}+Sq^{(16,0,3)}+Sq^{(10,2,3)}+Sq^{(7,3,3)}+Sq^{(4,4,3)}+Sq^{(0,3,4)}+Sq^{(22,0,0,1)}+Sq^{(16,2,0,1)}+Sq^{(4,6,0,1)}+Sq^{(1,7,0,1)}+Sq^{(12,1,1,1)}+Sq^{(8,0,2,1)}+Sq^{(2,2,2,1)}+Sq^{(7,0,0,2)}+Sq^{(4,1,0,2)}+Sq^{(6,0,0,0,1)})  3_{1}
+(Sq^{33}+Sq^{(27,2)}+Sq^{(3,10)}+Sq^{(0,11)}+Sq^{(19,0,2)}+Sq^{(10,3,2)}+Sq^{(12,0,3)}+Sq^{(9,1,3)}+Sq^{(6,2,3)}+Sq^{(2,1,4)}+Sq^{(3,0,0,2)})  3_{2}
+(Sq^{32}+Sq^{(20,4)}+Sq^{(17,5)}+Sq^{(8,8)}+Sq^{(2,10)}+Sq^{(4,0,4)}+Sq^{(17,0,0,1)}+Sq^{(5,4,0,1)})  3_{3}
+(Sq^{31}+Sq^{(13,6)}+Sq^{(10,7)}+Sq^{(4,9)}+Sq^{(18,2,1)}+Sq^{(9,5,1)}+Sq^{(6,6,1)}+Sq^{(3,7,1)}+Sq^{(14,1,2)}+Sq^{(11,2,2)}+Sq^{(2,5,2)}+Sq^{(10,0,3)}+Sq^{(16,0,0,1)}+Sq^{(10,2,0,1)}+Sq^{(1,5,0,1)}+Sq^{(2,0,2,1)}+Sq^{(0,0,0,0,1)})  3_{4}
+(Sq^{26}+Sq^{(17,3)}+Sq^{(5,7)}+Sq^{(2,8)}+Sq^{(9,1,2)}+Sq^{(6,2,2)})  3_{5}
+(Sq^{21}+Sq^{(12,3)}+Sq^{(9,4)}+Sq^{(14,0,1)}+Sq^{(0,2,0,1)})  3_{9}
+(Sq^{9}+Sq^{(3,2)})  3_{13}
+Sq^{(0,0,1)}  3_{14}
+Sq^{6}  3_{15}
+(Sq^{3}+Sq^{(0,1)})  3_{16}
$

\item[44:] $d(4_{19}) =
(Sq^{41}+Sq^{(29,4)}+Sq^{(26,5)}+Sq^{(20,7)}+Sq^{(17,8)}+Sq^{(8,11)}+Sq^{(2,13)}+Sq^{(27,0,2)}+Sq^{(24,1,2)}+Sq^{(0,9,2)}+Sq^{(20,0,3)}+Sq^{(11,3,3)}+Sq^{(5,5,3)}+Sq^{(2,6,3)}+Sq^{(13,0,4)}+Sq^{(4,3,4)}+Sq^{(3,1,5)}+Sq^{(0,2,5)}+Sq^{(11,0,0,2)}+Sq^{(2,3,0,2)})  3_{0}
+(Sq^{(35,1)}+Sq^{(29,3)}+Sq^{(20,6)}+Sq^{(11,9)}+Sq^{(28,1,1)}+Sq^{(13,6,1)}+Sq^{(1,10,1)}+Sq^{(15,3,2)}+Sq^{(6,6,2)}+Sq^{(14,1,3)}+Sq^{(8,3,3)}+Sq^{(5,4,3)}+Sq^{(2,5,3)}+Sq^{(4,2,4)}+Sq^{(23,0,0,1)}+Sq^{(17,2,0,1)}+Sq^{(5,6,0,1)}+Sq^{(16,0,1,1)}+Sq^{(4,4,1,1)}+Sq^{(9,0,2,1)}+Sq^{(3,2,2,1)}+Sq^{(7,0,0,0,1)}+Sq^{(4,1,0,0,1)})  3_{1}
+(Sq^{(28,2)}+Sq^{(4,10)}+Sq^{(8,4,2)}+Sq^{(2,6,2)}+Sq^{(13,0,3)}+Sq^{(7,2,3)}+Sq^{(1,4,3)})  3_{2}
+(Sq^{(30,1)}+Sq^{(21,4)}+Sq^{(15,6)}+Sq^{(12,7)}+Sq^{(3,10)}+Sq^{(0,11)}+Sq^{(5,7,1)}+Sq^{(5,0,4)}+Sq^{(18,0,0,1)}+Sq^{(6,4,0,1)})  3_{3}
+(Sq^{(23,3)}+Sq^{(14,6)}+Sq^{(8,8)}+Sq^{(13,4,1)}+Sq^{(10,5,1)}+Sq^{(7,6,1)}+Sq^{(4,7,1)}+Sq^{(6,4,2)}+Sq^{(11,0,3)}+Sq^{(17,0,0,1)}+Sq^{(14,1,0,1)}+Sq^{(5,4,0,1)}+Sq^{(1,0,0,0,1)})  3_{4}
+(Sq^{27}+Sq^{(24,1)}+Sq^{(21,2)}+Sq^{(18,3)}+Sq^{(15,4)}+Sq^{(3,8)}+Sq^{(13,0,2)}+Sq^{(7,2,2)}+Sq^{(1,4,2)}+Sq^{(6,0,3)}+Sq^{(3,1,3)})  3_{5}
+(Sq^{26}+Sq^{(20,2)}+Sq^{(14,4)}+Sq^{(3,3,2)})  3_{6}
+(Sq^{24}+Sq^{(6,6)}+Sq^{(3,7)}+Sq^{(17,0,1)}+Sq^{(2,5,1)}+Sq^{(4,2,2)}+Sq^{(9,0,0,1)})  3_{7}
+(Sq^{23}+Sq^{(11,4)}+Sq^{(3,2,2)})  3_{8}
+(Sq^{(19,1)}+Sq^{(15,0,1)}+Sq^{(8,0,2)}+Sq^{(1,0,3)})  3_{9}
+(Sq^{(17,1)}+Sq^{(14,2)}+Sq^{(5,5)}+Sq^{(3,1,2)}+Sq^{(0,2,2)})  3_{10}
+(Sq^{11}+Sq^{(5,2)}+Sq^{(2,3)})  3_{11}
+(Sq^{10}+Sq^{(7,1)}+Sq^{(4,2)})  3_{12}
$

\item[44:] $d(4_{20}) =
(Sq^{(29,4)}+Sq^{(23,6)}+Sq^{(20,7)}+Sq^{(17,8)}+Sq^{(11,10)}+Sq^{(8,11)}+Sq^{(5,12)}+Sq^{(27,0,2)}+Sq^{(24,1,2)}+Sq^{(9,6,2)}+Sq^{(6,7,2)}+Sq^{(0,9,2)}+Sq^{(17,1,3)}+Sq^{(11,3,3)}+Sq^{(13,0,4)}+Sq^{(7,2,4)}+Sq^{(4,3,4)}+Sq^{(1,4,4)}+Sq^{(6,0,5)}+Sq^{(3,1,5)}+Sq^{(0,2,5)}+Sq^{(11,0,0,2)}+Sq^{(8,1,0,2)}+Sq^{(1,1,1,2)})  3_{0}
+(Sq^{(29,3)}+Sq^{(17,7)}+Sq^{(14,8)}+Sq^{(11,9)}+Sq^{(31,0,1)}+Sq^{(28,1,1)}+Sq^{(19,4,1)}+Sq^{(16,5,1)}+Sq^{(10,7,1)}+Sq^{(7,8,1)}+Sq^{(1,10,1)}+Sq^{(15,3,2)}+Sq^{(6,6,2)}+Sq^{(17,0,3)}+Sq^{(14,1,3)}+Sq^{(2,5,3)}+Sq^{(10,0,4)}+Sq^{(7,1,4)}+Sq^{(0,1,5)}+Sq^{(23,0,0,1)}+Sq^{(20,1,0,1)}+Sq^{(17,2,0,1)}+Sq^{(11,4,0,1)}+Sq^{(8,5,0,1)}+Sq^{(5,6,0,1)}+Sq^{(13,1,1,1)}+Sq^{(4,4,1,1)}+Sq^{(1,5,1,1)}+Sq^{(0,3,2,1)}+Sq^{(7,0,0,0,1)}+Sq^{(4,1,0,0,1)})  3_{1}
+(Sq^{34}+Sq^{(28,2)}+Sq^{(25,3)}+Sq^{(19,5)}+Sq^{(13,7)}+Sq^{(7,9)}+Sq^{(1,11)}+Sq^{(17,1,2)}+Sq^{(5,5,2)}+Sq^{(10,1,3)}+Sq^{(3,1,4)}+Sq^{(0,2,4)}+Sq^{(1,1,0,2)})  3_{2}
+(Sq^{(30,1)}+Sq^{(21,4)}+Sq^{(18,5)}+Sq^{(12,7)}+Sq^{(9,8)}+Sq^{(0,11)}+Sq^{(11,5,1)}+Sq^{(8,6,1)}+Sq^{(2,8,1)}+Sq^{(2,1,4)}+Sq^{(6,4,0,1)}+Sq^{(0,6,0,1)})  3_{3}
+(Sq^{32}+Sq^{(29,1)}+Sq^{(26,2)}+Sq^{(23,3)}+Sq^{(17,5)}+Sq^{(11,7)}+Sq^{(8,8)}+Sq^{(2,10)}+Sq^{(19,2,1)}+Sq^{(13,4,1)}+Sq^{(7,6,1)}+Sq^{(4,7,1)}+Sq^{(18,0,2)}+Sq^{(12,2,2)}+Sq^{(9,3,2)}+Sq^{(6,4,2)}+Sq^{(8,1,3)}+Sq^{(2,3,3)}+Sq^{(1,1,4)}+Sq^{(17,0,0,1)}+Sq^{(11,2,0,1)}+Sq^{(10,0,1,1)}+Sq^{(2,0,0,2)})  3_{4}
+(Sq^{27}+Sq^{(21,2)}+Sq^{(18,3)}+Sq^{(12,5)}+Sq^{(9,6)}+Sq^{(0,9)}+Sq^{(20,0,1)}+Sq^{(2,6,1)}+Sq^{(13,0,2)}+Sq^{(10,1,2)}+Sq^{(7,2,2)}+Sq^{(1,4,2)}+Sq^{(6,0,3)})  3_{5}
+(Sq^{26}+Sq^{(17,3)}+Sq^{(14,4)}+Sq^{(5,7)}+Sq^{(3,3,2)})  3_{6}
+(Sq^{24}+Sq^{(21,1)}+Sq^{(18,2)}+Sq^{(12,4)}+Sq^{(0,8)}+Sq^{(14,1,1)}+Sq^{(5,4,1)}+Sq^{(2,5,1)}+Sq^{(4,2,2)}+Sq^{(9,0,0,1)})  3_{7}
+(Sq^{(11,4)}+Sq^{(5,6)}+Sq^{(2,7)}+Sq^{(3,2,2)}+Sq^{(2,0,3)})  3_{8}
+(Sq^{22}+Sq^{(19,1)}+Sq^{(13,3)}+Sq^{(10,4)}+Sq^{(15,0,1)}+Sq^{(9,2,1)}+Sq^{(8,0,2)}+Sq^{(1,0,3)})  3_{9}
+(Sq^{20}+Sq^{(8,4)}+Sq^{(2,6)}+Sq^{(1,4,1)}+Sq^{(0,2,2)})  3_{10}
+Sq^{11}  3_{11}
+Sq^{10}  3_{13}
+(Sq^{8}+Sq^{(1,0,1)})  3_{14}
+Sq^{4}  3_{16}
+Sq^{3}  3_{17}
+Sq^{2}  3_{18}
$

\end{itemize}

\subsection{Homological degree 5}

Complete through degree $t=44$.

\begin{itemize}
\addtolength{\itemsep}{2.4ex}

\item[5:] $d(5_{0}) =
Sq^{1}  4_{0}
$

\item[14:] $d(5_{1}) =
Sq^{10}  4_{0}
+(Sq^{3}+Sq^{(0,1)})  4_{1}
$

\item[16:] $d(5_{2}) =
Sq^{12}  4_{0}
+(Sq^{5}+Sq^{(2,1)})  4_{1}
+Sq^{3}  4_{2}
$

\item[19:] $d(5_{3}) =
(Sq^{(9,2)}+Sq^{(6,3)}+Sq^{(3,4)})  4_{0}
+Sq^{(5,1)}  4_{1}
+(Sq^{6}+Sq^{(0,2)})  4_{2}
+Sq^{1}  4_{3}
$

\item[20:] $d(5_{4}) =
(Sq^{(10,2)}+Sq^{(4,4)})  4_{0}
+(Sq^{9}+Sq^{(6,1)})  4_{1}
+Sq^{(0,0,1)}  4_{2}
+Sq^{2}  4_{3}
$

\item[20:] $d(5_{5}) =
Sq^{16}  4_{0}
+(Sq^{9}+Sq^{(6,1)})  4_{1}
+Sq^{1}  4_{4}
$

\item[22:] $d(5_{6}) =
(Sq^{(9,3)}+Sq^{(6,4)}+Sq^{(3,5)}+Sq^{(0,6)})  4_{0}
+(Sq^{9}+Sq^{(6,1)})  4_{2}
+Sq^{4}  4_{3}
+Sq^{1}  4_{5}
$

\item[23:] $d(5_{7}) =
Sq^{(13,2)}  4_{0}
+Sq^{12}  4_{1}
+(Sq^{10}+Sq^{(7,1)}+Sq^{(4,2)}+Sq^{(0,1,1)})  4_{2}
+Sq^{2}  4_{5}
+Sq^{1}  4_{6}
$

\item[25:] $d(5_{8}) =
(Sq^{(12,3)}+Sq^{(9,4)}+Sq^{(6,5)}+Sq^{(3,6)}+Sq^{(0,7)})  4_{0}
+Sq^{(11,1)}  4_{1}
+Sq^{12}  4_{2}
+Sq^{7}  4_{3}
+Sq^{4}  4_{5}
+Sq^{3}  4_{6}
+Sq^{1}  4_{8}
$

\item[26:] $d(5_{9}) =
(Sq^{(13,3)}+Sq^{(4,6)})  4_{0}
+(Sq^{15}+Sq^{(12,1)})  4_{1}
+(Sq^{13}+Sq^{(7,2)}+Sq^{(6,0,1)})  4_{2}
+Sq^{4}  4_{6}
+Sq^{2}  4_{8}
$

\item[28:] $d(5_{10}) =
(Sq^{(0,8)}+Sq^{(10,0,2)}+Sq^{(4,2,2)})  4_{0}
+(Sq^{17}+Sq^{(11,2)}+Sq^{(8,3)}+Sq^{(5,4)})  4_{1}
+(Sq^{15}+Sq^{(12,1)}+Sq^{(8,0,1)}+Sq^{(5,1,1)}+Sq^{(0,0,0,1)})  4_{2}
+Sq^{10}  4_{3}
+Sq^{9}  4_{4}
+Sq^{(0,2)}  4_{6}
+Sq^{(3,1)}  4_{7}
+Sq^{4}  4_{8}
$

\item[29:] $d(5_{11}) =
(Sq^{25}+Sq^{(13,4)}+Sq^{(10,5)}+Sq^{(1,8)}+Sq^{(4,0,3)})  4_{0}
+(Sq^{(15,1)}+Sq^{(12,2)}+Sq^{(9,3)}+Sq^{(6,4)}+Sq^{(0,6)})  4_{1}
+(Sq^{16}+Sq^{(10,2)}+Sq^{(9,0,1)})  4_{2}
+(Sq^{10}+Sq^{(4,2)})  4_{4}
+(Sq^{8}+Sq^{(2,2)})  4_{5}
+Sq^{(1,2)}  4_{6}
+(Sq^{7}+Sq^{(1,2)}+Sq^{(0,0,1)})  4_{7}
+Sq^{2}  4_{10}
$

\item[35:] $d(5_{12}) =
(Sq^{31}+Sq^{(25,2)}+Sq^{(22,3)}+Sq^{(19,4)}+Sq^{(13,6)}+Sq^{(7,8)}+Sq^{(1,10)}+Sq^{(11,2,2)}+Sq^{(7,1,3)}+Sq^{(1,3,3)}+Sq^{(3,0,4)})  4_{0}
+(Sq^{(15,3)}+Sq^{(12,4)}+Sq^{(9,5)}+Sq^{(6,6)}+Sq^{(3,7)}+Sq^{(0,8)}+Sq^{(10,0,2)}+Sq^{(7,1,2)}+Sq^{(1,3,2)}+Sq^{(3,0,3)})  4_{1}
+(Sq^{22}+Sq^{(19,1)}+Sq^{(16,2)}+Sq^{(7,5)}+Sq^{(1,7)}+Sq^{(12,1,1)}+Sq^{(3,4,1)}+Sq^{(0,5,1)}+Sq^{(2,2,2)}+Sq^{(7,0,0,1)}+Sq^{(4,1,0,1)}+Sq^{(0,0,1,1)})  4_{2}
+Sq^{17}  4_{3}
+(Sq^{16}+Sq^{(7,3)}+Sq^{(4,4)}+Sq^{(1,5)})  4_{4}
+(Sq^{(2,4)}+Sq^{(0,0,2)})  4_{5}
+(Sq^{13}+Sq^{(1,4)})  4_{6}
+(Sq^{13}+Sq^{(10,1)}+Sq^{(7,2)}+Sq^{(4,3)}+Sq^{(1,4)}+Sq^{(6,0,1)}+Sq^{(0,2,1)})  4_{7}
+(Sq^{9}+Sq^{(6,1)}+Sq^{(3,2)})  4_{9}
+Sq^{1}  4_{11}
$

\item[36:] $d(5_{13}) =
(Sq^{(26,2)}+Sq^{(23,3)}+Sq^{(20,4)}+Sq^{(14,6)}+Sq^{(11,7)}+Sq^{(2,10)}+Sq^{(0,6,2)}+Sq^{(11,0,3)}+Sq^{(5,2,3)}+Sq^{(4,0,4)})  4_{0}
+(Sq^{25}+Sq^{(22,1)}+Sq^{(10,5)}+Sq^{(4,7)}+Sq^{(11,0,2)})  4_{1}
+(Sq^{23}+Sq^{(17,2)}+Sq^{(11,4)}+Sq^{(5,6)}+Sq^{(16,0,1)}+Sq^{(4,4,1)}+Sq^{(1,5,1)}+Sq^{(9,0,2)}+Sq^{(3,2,2)}+Sq^{(5,1,0,1)})  4_{2}
+Sq^{18}  4_{3}
+(Sq^{17}+Sq^{(5,4)})  4_{4}
+(Sq^{15}+Sq^{(9,2)}+Sq^{(3,4)})  4_{5}
+Sq^{(0,0,2)}  4_{6}
+(Sq^{(11,1)}+Sq^{(5,3)}+Sq^{(7,0,1)}+Sq^{(1,2,1)})  4_{7}
+Sq^{12}  4_{8}
+(Sq^{(4,2)}+Sq^{(1,3)}+Sq^{(3,0,1)}+Sq^{(0,1,1)})  4_{9}
$

\item[36:] $d(5_{14}) =
(Sq^{32}+Sq^{(14,6)}+Sq^{(8,8)}+Sq^{(5,9)})  4_{0}
+(Sq^{25}+Sq^{(22,1)}+Sq^{(10,5)}+Sq^{(4,7)}+Sq^{(12,2,1)}+Sq^{(6,4,1)}+Sq^{(0,6,1)})  4_{1}
+(Sq^{(11,4)}+Sq^{(5,6)}+Sq^{(13,1,1)}+Sq^{(9,0,2)})  4_{2}
+(Sq^{17}+Sq^{(5,4)})  4_{4}
+(Sq^{(11,1)}+Sq^{(5,3)}+Sq^{(7,0,1)}+Sq^{(1,2,1)})  4_{7}
+Sq^{1}  4_{12}
$

\item[38:] $d(5_{15}) =
(Sq^{(25,3)}+Sq^{(22,4)}+Sq^{(19,5)}+Sq^{(1,11)}+Sq^{(14,2,2)}+Sq^{(11,3,2)}+Sq^{(5,5,2)}+Sq^{(2,6,2)}+Sq^{(7,2,3)}+Sq^{(1,4,3)}+Sq^{(3,1,4)})  4_{0}
+(Sq^{(24,1)}+Sq^{(15,4)}+Sq^{(12,5)}+Sq^{(9,6)}+Sq^{(2,6,1)}+Sq^{(4,3,2)}+Sq^{(1,4,2)}+Sq^{(6,0,3)}+Sq^{(0,2,3)})  4_{1}
+(Sq^{(4,7)}+Sq^{(18,0,1)}+Sq^{(15,1,1)}+Sq^{(6,4,1)}+Sq^{(2,3,2)}+Sq^{(10,0,0,1)}+Sq^{(7,1,0,1)})  4_{2}
+Sq^{(8,4)}  4_{3}
+Sq^{(7,4)}  4_{4}
+(Sq^{(11,2)}+Sq^{(5,4)})  4_{5}
+(Sq^{(4,4)}+Sq^{(2,0,2)})  4_{6}
+(Sq^{(1,5)}+Sq^{(3,2,1)})  4_{7}
+Sq^{14}  4_{8}
+(Sq^{12}+Sq^{(9,1)}+Sq^{(3,3)}+Sq^{(0,4)}+Sq^{(2,1,1)})  4_{9}
+Sq^{2}  4_{13}
+Sq^{1}  4_{14}
$

\item[40:] $d(5_{16}) =
(Sq^{(30,2)}+Sq^{(24,4)}+Sq^{(18,6)}+Sq^{(9,9)}+Sq^{(6,10)}+Sq^{(0,12)}+Sq^{(7,5,2)}+Sq^{(1,7,2)}+Sq^{(12,1,3)}+Sq^{(6,3,3)}+Sq^{(0,5,3)})  4_{0}
+(Sq^{(17,4)}+Sq^{(8,7)}+Sq^{(5,8)}+Sq^{(2,9)}+Sq^{(10,4,1)}+Sq^{(7,5,1)}+Sq^{(12,1,2)}+Sq^{(9,2,2)}+Sq^{(0,5,2)}+Sq^{(5,1,3)}+Sq^{(2,2,3)}+Sq^{(0,0,2,1)})  4_{1}
+(Sq^{(24,1)}+Sq^{(6,7)}+Sq^{(3,8)}+Sq^{(20,0,1)}+Sq^{(17,1,1)}+Sq^{(7,2,2)}+Sq^{(9,1,0,1)})  4_{2}
+(Sq^{(10,4)}+Sq^{(4,6)}+Sq^{(1,7)})  4_{3}
+(Sq^{(15,2)}+Sq^{(12,3)}+Sq^{(3,6)}+Sq^{(0,7)}+Sq^{(4,1,2)})  4_{4}
+(Sq^{18}+Sq^{(12,2)}+Sq^{(9,3)}+Sq^{(6,4)}+Sq^{(1,1,2)})  4_{6}
+(Sq^{(6,4)}+Sq^{(0,6)}+Sq^{(11,0,1)}+Sq^{(8,1,1)}+Sq^{(2,3,1)}+Sq^{(1,1,2)}+Sq^{(0,1,0,1)})  4_{7}
+Sq^{16}  4_{8}
+(Sq^{14}+Sq^{(2,4)}+Sq^{(1,2,1)}+Sq^{(0,0,2)})  4_{9}
+Sq^{(10,1)}  4_{10}
+Sq^{4}  4_{13}
$

\item[42:] $d(5_{17}) =
(Sq^{(29,3)}+Sq^{(26,4)}+Sq^{(20,6)}+Sq^{(8,10)}+Sq^{(2,12)}+Sq^{(21,1,2)}+Sq^{(12,4,2)}+Sq^{(9,5,2)}+Sq^{(6,6,2)}+Sq^{(0,8,2)}+Sq^{(8,3,3)}+Sq^{(5,4,3)}+Sq^{(2,5,3)}+Sq^{(4,2,4)}+Sq^{(3,0,5)})  4_{0}
+(Sq^{(28,1)}+Sq^{(19,4)}+Sq^{(10,7)}+Sq^{(1,10)}+Sq^{(9,5,1)}+Sq^{(11,2,2)}+Sq^{(10,0,3)}+Sq^{(0,1,4)}+Sq^{(2,0,2,1)})  4_{1}
+(Sq^{(14,5)}+Sq^{(11,6)}+Sq^{(8,7)}+Sq^{(2,9)}+Sq^{(22,0,1)}+Sq^{(19,1,1)}+Sq^{(4,6,1)}+Sq^{(9,2,2)}+Sq^{(6,3,2)}+Sq^{(11,1,0,1)}+Sq^{(2,4,0,1)})  4_{2}
+Sq^{24}  4_{3}
+(Sq^{23}+Sq^{(11,4)}+Sq^{(2,7)}+Sq^{(9,0,2)}+Sq^{(6,1,2)}+Sq^{(0,3,2)})  4_{4}
+(Sq^{(15,2)}+Sq^{(9,4)})  4_{5}
+(Sq^{(8,4)}+Sq^{(6,0,2)}+Sq^{(0,2,2)})  4_{6}
+(Sq^{(17,1)}+Sq^{(14,2)}+Sq^{(5,5)}+Sq^{(2,6)}+Sq^{(7,2,1)}+Sq^{(6,0,2)})  4_{7}
+(Sq^{(13,1)}+Sq^{(1,5)}+Sq^{(1,0,0,1)})  4_{9}
+(Sq^{15}+Sq^{(12,1)}+Sq^{(3,4)}+Sq^{(0,5)}+Sq^{(8,0,1)}+Sq^{(0,0,0,1)})  4_{10}
+Sq^{5}  4_{14}
$

\item[43:] $d(5_{18}) =
(Sq^{(30,3)}+Sq^{(27,4)}+Sq^{(18,7)}+Sq^{(15,8)}+Sq^{(12,9)}+Sq^{(13,4,2)}+Sq^{(10,5,2)}+Sq^{(4,7,2)}+Sq^{(12,2,3)}+Sq^{(3,5,3)}+Sq^{(5,2,4)}+Sq^{(2,3,4)}+Sq^{(1,1,5)}+Sq^{(9,0,0,2)}+Sq^{(6,1,0,2)}+Sq^{(3,2,0,2)})  4_{0}
+(Sq^{32}+Sq^{(26,2)}+Sq^{(14,6)}+Sq^{(11,7)}+Sq^{(8,8)}+Sq^{(5,9)}+Sq^{(2,10)}+Sq^{(15,1,2)}+Sq^{(3,5,2)}+Sq^{(0,6,2)})  4_{1}
+(Sq^{(15,5)}+Sq^{(12,6)}+Sq^{(9,7)}+Sq^{(6,8)}+Sq^{(0,10)}+Sq^{(11,4,1)}+Sq^{(8,5,1)}+Sq^{(5,6,1)}+Sq^{(2,7,1)}+Sq^{(10,2,2)}+Sq^{(4,4,2)}+Sq^{(2,0,4)}+Sq^{(15,0,0,1)}+Sq^{(12,1,0,1)}+Sq^{(3,4,0,1)}+Sq^{(0,5,0,1)}+Sq^{(0,0,0,2)})  4_{2}
+Sq^{25}  4_{3}
+(Sq^{24}+Sq^{(15,3)}+Sq^{(12,4)}+Sq^{(9,5)}+Sq^{(3,7)}+Sq^{(0,8)}+Sq^{(7,1,2)}+Sq^{(4,2,2)}+Sq^{(1,3,2)}+Sq^{(3,0,3)})  4_{4}
+(Sq^{22}+Sq^{(16,2)}+Sq^{(10,4)}+Sq^{(8,0,2)})  4_{5}
+(Sq^{21}+Sq^{(12,3)}+Sq^{(9,4)}+Sq^{(1,2,2)}+Sq^{(0,0,3)})  4_{6}
+(Sq^{21}+Sq^{(18,1)}+Sq^{(15,2)}+Sq^{(12,3)}+Sq^{(9,4)}+Sq^{(6,5)}+Sq^{(3,6)}+Sq^{(0,7)}+Sq^{(14,0,1)}+Sq^{(8,2,1)}+Sq^{(5,3,1)}+Sq^{(2,4,1)}+Sq^{(7,0,2)}+Sq^{(4,1,2)}+Sq^{(1,2,2)}+Sq^{(0,0,3)}+Sq^{(6,0,0,1)}+Sq^{(0,2,0,1)})  4_{7}
+Sq^{8}  4_{12}
+(Sq^{5}+Sq^{(2,1)})  4_{15}
+Sq^{1}  4_{17}
$

\item[44:] $d(5_{19}) =
(Sq^{(31,3)}+Sq^{(22,6)}+Sq^{(19,7)}+Sq^{(10,10)}+Sq^{(23,1,2)}+Sq^{(11,5,2)}+Sq^{(8,6,2)}+Sq^{(5,7,2)}+Sq^{(7,4,3)}+Sq^{(4,5,3)}+Sq^{(1,6,3)}+Sq^{(9,1,4)}+Sq^{(6,2,4)}+Sq^{(0,4,4)}+Sq^{(5,0,5)}+Sq^{(10,0,0,2)}+Sq^{(4,2,0,2)})  4_{0}
+(Sq^{(30,1)}+Sq^{(27,2)}+Sq^{(24,3)}+Sq^{(18,5)}+Sq^{(15,6)}+Sq^{(12,7)}+Sq^{(6,9)}+Sq^{(0,11)}+Sq^{(14,4,1)}+Sq^{(11,5,1)}+Sq^{(1,6,2)}+Sq^{(9,1,3)}+Sq^{(6,2,3)}+Sq^{(5,0,4)})  4_{1}
+(Sq^{(25,2)}+Sq^{(19,4)}+Sq^{(10,7)}+Sq^{(1,10)}+Sq^{(21,1,1)}+Sq^{(9,5,1)}+Sq^{(6,6,1)}+Sq^{(3,7,1)}+Sq^{(17,0,2)}+Sq^{(5,4,2)}+Sq^{(13,1,0,1)}+Sq^{(0,0,0,0,1)})  4_{2}
+(Sq^{26}+Sq^{(8,6)}+Sq^{(5,7)}+Sq^{(2,8)})  4_{3}
+(Sq^{25}+Sq^{(10,5)}+Sq^{(7,6)}+Sq^{(4,7)}+Sq^{(4,0,3)})  4_{4}
+(Sq^{23}+Sq^{(17,2)}+Sq^{(11,4)}+Sq^{(5,6)}+Sq^{(9,0,2)})  4_{5}
+(Sq^{22}+Sq^{(13,3)}+Sq^{(10,4)})  4_{6}
+(Sq^{(19,1)}+Sq^{(13,3)}+Sq^{(4,6)}+Sq^{(1,7)}+Sq^{(15,0,1)}+Sq^{(12,1,1)}+Sq^{(3,4,1)}+Sq^{(1,0,3)}+Sq^{(0,0,1,1)})  4_{7}
+Sq^{20}  4_{8}
+(Sq^{(12,2)}+Sq^{(9,3)}+Sq^{(6,4)}+Sq^{(11,0,1)}+Sq^{(8,1,1)}+Sq^{(4,0,2)}+Sq^{(3,0,0,1)})  4_{9}
+Sq^{(10,0,1)}  4_{10}
+Sq^{10}  4_{11}
+Sq^{8}  4_{13}
+Sq^{7}  4_{14}
+Sq^{2}  4_{16}
$

\end{itemize}

\subsection{Homological degree 6}

Complete through degree $t=44$.

\begin{itemize}
\addtolength{\itemsep}{2.4ex}

\item[6:] $d(6_{0}) =
Sq^{1}  5_{0}
$

\item[16:] $d(6_{1}) =
Sq^{11}  5_{0}
+Sq^{2}  5_{1}
$

\item[17:] $d(6_{2}) =
Sq^{12}  5_{0}
+Sq^{(0,1)}  5_{1}
+Sq^{1}  5_{2}
$

\item[20:] $d(6_{3}) =
Sq^{15}  5_{0}
+Sq^{6}  5_{1}
+Sq^{4}  5_{2}
+Sq^{1}  5_{3}
$

\item[21:] $d(6_{4}) =
(Sq^{16}+Sq^{(10,2)})  5_{0}
+(Sq^{7}+Sq^{(4,1)}+Sq^{(0,0,1)})  5_{1}
+Sq^{1}  5_{5}
$

\item[22:] $d(6_{5}) =
Sq^{(11,2)}  5_{0}
+(Sq^{8}+Sq^{(5,1)})  5_{1}
+Sq^{(0,2)}  5_{2}
+Sq^{3}  5_{3}
+Sq^{2}  5_{4}
$

\item[23:] $d(6_{6}) =
(Sq^{(9,3)}+Sq^{(6,4)}+Sq^{(3,5)})  5_{0}
+Sq^{9}  5_{1}
+Sq^{4}  5_{3}
+Sq^{(0,1)}  5_{4}
+Sq^{1}  5_{6}
$

\item[26:] $d(6_{7}) =
(Sq^{(9,4)}+Sq^{(3,6)}+Sq^{(0,7)})  5_{0}
+Sq^{7}  5_{3}
+Sq^{6}  5_{4}
+Sq^{4}  5_{6}
+(Sq^{3}+Sq^{(0,1)})  5_{7}
+Sq^{1}  5_{8}
$

\item[29:] $d(6_{8}) =
(Sq^{(15,3)}+Sq^{(9,5)}+Sq^{(3,7)}+Sq^{(0,8)}+Sq^{(10,0,2)})  5_{0}
+(Sq^{(12,1)}+Sq^{(3,4)}+Sq^{(0,5)}+Sq^{(8,0,1)}+Sq^{(0,0,0,1)})  5_{1}
+Sq^{9}  5_{4}
+Sq^{7}  5_{6}
+Sq^{6}  5_{7}
+Sq^{4}  5_{8}
+Sq^{(0,1)}  5_{9}
+Sq^{1}  5_{10}
$

\item[32:] $d(6_{9}) =
(Sq^{27}+Sq^{(15,4)}+Sq^{(6,7)}+Sq^{(3,8)}+Sq^{(10,1,2)})  5_{0}
+(Sq^{(15,1)}+Sq^{(3,5)}+Sq^{(0,6)}+Sq^{(11,0,1)})  5_{1}
+(Sq^{16}+Sq^{(4,4)}+Sq^{(2,0,2)})  5_{2}
+Sq^{(7,2)}  5_{3}
+Sq^{12}  5_{4}
+Sq^{12}  5_{5}
+Sq^{(1,3)}  5_{6}
+(Sq^{9}+Sq^{(6,1)}+Sq^{(0,3)})  5_{7}
+Sq^{(1,2)}  5_{8}
+Sq^{(0,2)}  5_{9}
+Sq^{4}  5_{10}
+Sq^{3}  5_{11}
$

\item[36:] $d(6_{10}) =
(Sq^{31}+Sq^{(13,6)}+Sq^{(4,9)}+Sq^{(11,2,2)}+Sq^{(10,0,3)})  5_{0}
+(Sq^{22}+Sq^{(19,1)}+Sq^{(4,6)}+Sq^{(15,0,1)})  5_{1}
+(Sq^{(11,3)}+Sq^{(2,6)}+Sq^{(0,2,2)})  5_{2}
+Sq^{(8,3)}  5_{3}
+Sq^{(9,0,1)}  5_{4}
+(Sq^{(10,2)}+Sq^{(7,3)})  5_{5}
+Sq^{(5,2)}  5_{8}
+(Sq^{(7,1)}+Sq^{(4,2)})  5_{9}
+Sq^{(2,2)}  5_{10}
$

\item[37:] $d(6_{11}) =
(Sq^{32}+Sq^{(26,2)}+Sq^{(11,7)}+Sq^{(8,8)}+Sq^{(2,10)}+Sq^{(15,1,2)}+Sq^{(12,2,2)}+Sq^{(6,4,2)})  5_{0}
+(Sq^{23}+Sq^{(20,1)}+Sq^{(11,4)}+Sq^{(8,5)}+Sq^{(5,6)}+Sq^{(2,7)}+Sq^{(16,0,1)}+Sq^{(4,4,1)}+Sq^{(8,0,0,1)})  5_{1}
+Sq^{1}  5_{14}
$

\item[38:] $d(6_{12}) =
(Sq^{(27,2)}+Sq^{(15,6)}+Sq^{(12,7)}+Sq^{(3,10)}+Sq^{(10,3,2)}+Sq^{(1,6,2)})  5_{0}
+(Sq^{(12,4)}+Sq^{(6,6)}+Sq^{(0,8)}+Sq^{(2,5,1)})  5_{1}
+(Sq^{22}+Sq^{(16,2)}+Sq^{(13,3)}+Sq^{(10,4)}+Sq^{(8,0,2)}+Sq^{(1,0,3)})  5_{2}
+(Sq^{19}+Sq^{(13,2)}+Sq^{(10,3)})  5_{3}
+(Sq^{18}+Sq^{(15,1)}+Sq^{(11,0,1)})  5_{4}
+(Sq^{18}+Sq^{(12,2)}+Sq^{(6,4)})  5_{5}
+Sq^{(7,3)}  5_{6}
+(Sq^{15}+Sq^{(0,5)})  5_{7}
+(Sq^{13}+Sq^{(7,2)}+Sq^{(1,4)})  5_{8}
+(Sq^{(9,1)}+Sq^{(3,3)})  5_{9}
+Sq^{(4,2)}  5_{10}
+(Sq^{3}+Sq^{(0,1)})  5_{12}
$

\item[40:] $d(6_{13}) =
(Sq^{(11,8)}+Sq^{(15,2,2)}+Sq^{(12,3,2)}+Sq^{(9,4,2)}+Sq^{(6,5,2)}+Sq^{(3,6,2)}+Sq^{(2,4,3)}+Sq^{(7,0,4)})  5_{0}
+(Sq^{(14,4)}+Sq^{(5,7)}+Sq^{(2,8)}+Sq^{(19,0,1)}+Sq^{(4,5,1)})  5_{1}
+(Sq^{(15,3)}+Sq^{(10,0,2)}+Sq^{(7,1,2)})  5_{2}
+(Sq^{(15,2)}+Sq^{(9,4)}+Sq^{(3,6)}+Sq^{(0,7)}+Sq^{(7,0,2)})  5_{3}
+(Sq^{20}+Sq^{(5,5)}+Sq^{(2,6)}+Sq^{(13,0,1)})  5_{4}
+(Sq^{(6,4)}+Sq^{(3,5)})  5_{6}
+(Sq^{(11,2)}+Sq^{(8,3)}+Sq^{(10,0,1)})  5_{7}
+(Sq^{15}+Sq^{(0,5)}+Sq^{(1,0,2)})  5_{8}
+(Sq^{14}+Sq^{(2,4)}+Sq^{(7,0,1)}+Sq^{(0,0,2)})  5_{9}
+(Sq^{5}+Sq^{(2,1)})  5_{12}
+Sq^{4}  5_{13}
$

\item[42:] $d(6_{14}) =
(Sq^{(31,2)}+Sq^{(25,4)}+Sq^{(7,10)}+Sq^{(1,12)}+Sq^{(14,3,2)}+Sq^{(11,4,2)}+Sq^{(8,5,2)}+Sq^{(2,7,2)}+Sq^{(13,1,3)}+Sq^{(10,2,3)}+Sq^{(4,4,3)}+Sq^{(9,0,4)})  5_{0}
+(Sq^{28}+Sq^{(16,4)}+Sq^{(13,5)}+Sq^{(10,6)}+Sq^{(7,7)}+Sq^{(4,8)}+Sq^{(6,5,1)}+Sq^{(3,6,1)}+Sq^{(0,7,1)})  5_{1}
+(Sq^{(17,3)}+Sq^{(5,7)}+Sq^{(9,1,2)}+Sq^{(0,4,2)}+Sq^{(5,0,3)})  5_{2}
+(Sq^{23}+Sq^{(17,2)}+Sq^{(14,3)}+Sq^{(8,5)}+Sq^{(5,6)}+Sq^{(9,0,2)}+Sq^{(6,1,2)})  5_{3}
+(Sq^{22}+Sq^{(15,0,1)}+Sq^{(3,4,1)}+Sq^{(0,5,1)})  5_{4}
+(Sq^{(16,2)}+Sq^{(10,4)}+Sq^{(8,0,2)})  5_{5}
+(Sq^{(11,3)}+Sq^{(5,5)}+Sq^{(2,6)})  5_{6}
+(Sq^{(13,2)}+Sq^{(10,3)}+Sq^{(4,5)}+Sq^{(12,0,1)}+Sq^{(2,1,2)})  5_{7}
+(Sq^{(2,5)}+Sq^{(0,1,2)})  5_{8}
+(Sq^{(4,4)}+Sq^{(1,5)}+Sq^{(3,2,1)}+Sq^{(2,0,2)})  5_{9}
+(Sq^{(8,2)}+Sq^{(2,4)}+Sq^{(0,0,2)})  5_{10}
+Sq^{(0,2)}  5_{13}
$

\item[43:] $d(6_{15}) =
(Sq^{(29,3)}+Sq^{(26,4)}+Sq^{(14,8)}+Sq^{(8,10)}+Sq^{(5,11)}+Sq^{(2,12)}+Sq^{(24,0,2)}+Sq^{(9,5,2)}+Sq^{(0,8,2)}+Sq^{(11,2,3)}+Sq^{(5,4,3)}+Sq^{(4,2,4)}+Sq^{(1,3,4)}+Sq^{(3,0,5)})  5_{0}
+(Sq^{29}+Sq^{(5,8)}+Sq^{(10,4,1)})  5_{1}
+(Sq^{27}+Sq^{(15,4)}+Sq^{(9,6)}+Sq^{(6,7)}+Sq^{(3,8)}+Sq^{(13,0,2)}+Sq^{(10,1,2)}+Sq^{(6,0,3)})  5_{2}
+(Sq^{24}+Sq^{(18,2)}+Sq^{(15,3)}+Sq^{(12,4)}+Sq^{(9,5)}+Sq^{(10,0,2)}+Sq^{(7,1,2)}+Sq^{(4,2,2)}+Sq^{(1,3,2)}+Sq^{(3,0,3)})  5_{3}
+(Sq^{(20,1)}+Sq^{(11,4)}+Sq^{(8,5)}+Sq^{(5,6)}+Sq^{(16,0,1)}+Sq^{(8,0,0,1)})  5_{4}
+(Sq^{(14,3)}+Sq^{(11,4)}+Sq^{(5,6)}+Sq^{(2,7)}+Sq^{(9,0,2)}+Sq^{(6,1,2)}+Sq^{(3,2,2)})  5_{5}
+(Sq^{21}+Sq^{(15,2)}+Sq^{(3,6)}+Sq^{(0,7)}+Sq^{(7,0,2)}+Sq^{(0,0,3)})  5_{6}
+(Sq^{(17,1)}+Sq^{(14,2)}+Sq^{(11,3)}+Sq^{(8,4)}+Sq^{(2,6)}+Sq^{(7,2,1)}+Sq^{(1,4,1)}+Sq^{(3,1,2)}+Sq^{(0,2,2)})  5_{7}
+(Sq^{(9,3)}+Sq^{(1,1,2)})  5_{8}
+(Sq^{(2,5)}+Sq^{(10,0,1)}+Sq^{(4,2,1)}+Sq^{(0,1,2)})  5_{9}
+(Sq^{14}+Sq^{(11,1)}+Sq^{(8,2)}+Sq^{(2,4)}+Sq^{(7,0,1)}+Sq^{(4,1,1)}+Sq^{(0,0,2)})  5_{11}
+Sq^{1}  5_{17}
$

\item[44:] $d(6_{16}) =
(Sq^{(30,3)}+Sq^{(27,4)}+Sq^{(12,9)}+Sq^{(9,10)}+Sq^{(6,11)}+Sq^{(3,12)}+Sq^{(13,4,2)}+Sq^{(10,5,2)}+Sq^{(15,1,3)}+Sq^{(12,2,3)}+Sq^{(9,3,3)}+Sq^{(3,5,3)}+Sq^{(5,2,4)})  5_{0}
+(Sq^{(18,4)}+Sq^{(15,5)}+Sq^{(12,6)}+Sq^{(6,8)}+Sq^{(0,10)}+Sq^{(11,4,1)}+Sq^{(8,5,1)}+Sq^{(15,0,0,1)})  5_{1}
+(Sq^{28}+Sq^{(22,2)}+Sq^{(16,4)}+Sq^{(7,7)}+Sq^{(14,0,2)}+Sq^{(11,1,2)}+Sq^{(8,2,2)})  5_{2}
+(Sq^{(13,4)}+Sq^{(5,2,2)})  5_{3}
+(Sq^{(12,4)}+Sq^{(6,6)}+Sq^{(17,0,1)})  5_{4}
+(Sq^{24}+Sq^{(15,3)}+Sq^{(12,4)}+Sq^{(6,6)}+Sq^{(3,7)})  5_{5}
+(Sq^{(7,5)}+Sq^{(1,0,3)})  5_{6}
+(Sq^{(9,4)}+Sq^{(6,5)}+Sq^{(3,6)}+Sq^{(0,7)}+Sq^{(7,0,2)}+Sq^{(1,2,2)}+Sq^{(6,0,0,1)}+Sq^{(0,2,0,1)})  5_{7}
+(Sq^{(7,4)}+Sq^{(4,5)}+Sq^{(5,0,2)}+Sq^{(2,1,2)})  5_{8}
+(Sq^{(12,2)}+Sq^{(3,5)}+Sq^{(11,0,1)}+Sq^{(4,0,2)}+Sq^{(1,1,2)})  5_{9}
+(Sq^{(10,2)}+Sq^{(4,4)})  5_{10}
+(Sq^{9}+Sq^{(6,1)})  5_{12}
+Sq^{(1,0,1)}  5_{13}
+Sq^{(3,1)}  5_{15}
$

\item[44:] $d(6_{17}) =
(Sq^{(27,4)}+Sq^{(21,6)}+Sq^{(15,8)}+Sq^{(12,9)}+Sq^{(3,12)}+Sq^{(25,0,2)}+Sq^{(10,5,2)}+Sq^{(1,8,2)}+Sq^{(15,1,3)}+Sq^{(3,5,3)}+Sq^{(11,0,4)}+Sq^{(8,1,4)})  5_{0}
+(Sq^{30}+Sq^{(18,4)}+Sq^{(6,8)}+Sq^{(3,9)}+Sq^{(23,0,1)}+Sq^{(11,4,1)}+Sq^{(2,7,1)}+Sq^{(2,0,4)}+Sq^{(15,0,0,1)}+Sq^{(3,4,0,1)})  5_{1}
+(Sq^{(22,2)}+Sq^{(19,3)}+Sq^{(16,4)}+Sq^{(10,6)}+Sq^{(1,9)}+Sq^{(11,1,2)}+Sq^{(8,2,2)}+Sq^{(2,4,2)}+Sq^{(0,0,4)})  5_{2}
+(Sq^{25}+Sq^{(1,8)}+Sq^{(11,0,2)}+Sq^{(5,2,2)}+Sq^{(1,1,3)})  5_{3}
+(Sq^{24}+Sq^{(12,4)}+Sq^{(6,6)}+Sq^{(0,8)}+Sq^{(2,5,1)})  5_{4}
+(Sq^{24}+Sq^{(15,3)}+Sq^{(12,4)}+Sq^{(0,8)}+Sq^{(10,0,2)}+Sq^{(4,2,2)})  5_{5}
+(Sq^{(13,3)}+Sq^{(10,4)}+Sq^{(7,5)}+Sq^{(8,0,2)}+Sq^{(2,2,2)}+Sq^{(1,0,3)})  5_{6}
+(Sq^{(18,1)}+Sq^{(9,4)}+Sq^{(6,5)}+Sq^{(8,2,1)}+Sq^{(1,2,2)}+Sq^{(0,0,3)})  5_{7}
+(Sq^{(13,2)}+Sq^{(10,3)})  5_{8}
+(Sq^{18}+Sq^{(15,1)}+Sq^{(12,2)}+Sq^{(9,3)}+Sq^{(3,5)}+Sq^{(1,1,2)})  5_{9}
+Sq^{16}  5_{10}
+(Sq^{15}+Sq^{(12,1)}+Sq^{(9,2)}+Sq^{(0,0,0,1)})  5_{11}
+(Sq^{9}+Sq^{(6,1)})  5_{12}
+Sq^{8}  5_{13}
+Sq^{6}  5_{15}
+(Sq^{4}+Sq^{(1,1)})  5_{16}
+Sq^{2}  5_{17}
$

\end{itemize}

\subsection{Homological degree 7}

Complete through degree $t=44$.

\begin{itemize}
\addtolength{\itemsep}{2.4ex}

\item[7:] $d(7_{0}) =
Sq^{1}  6_{0}
$

\item[18:] $d(7_{1}) =
Sq^{12}  6_{0}
+Sq^{2}  6_{1}
+Sq^{1}  6_{2}
$

\item[22:] $d(7_{2}) =
(Sq^{16}+Sq^{(10,2)})  6_{0}
+(Sq^{6}+Sq^{(0,2)})  6_{1}
+Sq^{1}  6_{4}
$

\item[23:] $d(7_{3}) =
(Sq^{17}+Sq^{(11,2)})  6_{0}
+Sq^{(0,0,1)}  6_{1}
+Sq^{6}  6_{2}
+(Sq^{3}+Sq^{(0,1)})  6_{3}
$

\item[24:] $d(7_{4}) =
Sq^{8}  6_{1}
+Sq^{4}  6_{3}
+Sq^{2}  6_{5}
+Sq^{1}  6_{6}
$

\item[30:] $d(7_{5}) =
(Sq^{(15,3)}+Sq^{(12,4)}+Sq^{(6,6)}+Sq^{(3,7)}+Sq^{(10,0,2)})  6_{0}
+(Sq^{14}+Sq^{(8,2)}+Sq^{(2,4)}+Sq^{(7,0,1)}+Sq^{(4,1,1)}+Sq^{(0,0,2)})  6_{1}
+Sq^{(1,4)}  6_{2}
+Sq^{(1,0,1)}  6_{5}
+Sq^{7}  6_{6}
+Sq^{(1,1)}  6_{7}
$

\item[33:] $d(7_{6}) =
(Sq^{(15,4)}+Sq^{(6,7)}+Sq^{(0,9)})  6_{0}
+(Sq^{(14,1)}+Sq^{(10,0,1)}+Sq^{(3,0,2)})  6_{1}
+Sq^{(7,3)}  6_{2}
+(Sq^{(7,2)}+Sq^{(0,2,1)})  6_{3}
+(Sq^{(5,2)}+Sq^{(4,0,1)})  6_{5}
+Sq^{(4,2)}  6_{6}
+(Sq^{(4,1)}+Sq^{(1,2)}+Sq^{(0,0,1)})  6_{7}
+Sq^{(1,1)}  6_{8}
$

\item[36:] $d(7_{7}) =
(Sq^{(15,5)}+Sq^{(12,6)}+Sq^{(6,8)}+Sq^{(0,10)}+Sq^{(10,2,2)}+Sq^{(9,0,3)})  6_{0}
+(Sq^{(17,1)}+Sq^{(8,4)}+Sq^{(5,5)}+Sq^{(2,6)}+Sq^{(13,0,1)}+Sq^{(1,4,1)}+Sq^{(6,0,2)}+Sq^{(0,2,2)})  6_{1}
+(Sq^{19}+Sq^{(7,4)}+Sq^{(4,5)})  6_{2}
+(Sq^{(4,4)}+Sq^{(1,5)}+Sq^{(3,2,1)}+Sq^{(2,0,2)})  6_{3}
+Sq^{15}  6_{4}
+(Sq^{(2,4)}+Sq^{(7,0,1)}+Sq^{(4,1,1)}+Sq^{(0,0,2)})  6_{5}
+(Sq^{(7,2)}+Sq^{(1,4)})  6_{6}
+(Sq^{(7,1)}+Sq^{(4,2)})  6_{7}
+(Sq^{(4,1)}+Sq^{(1,2)}+Sq^{(0,0,1)})  6_{8}
+Sq^{(1,1)}  6_{9}
$

\item[37:] $d(7_{8}) =
(Sq^{(8,3,2)}+Sq^{(2,5,2)}+Sq^{(10,0,3)})  6_{0}
+(Sq^{(6,5)}+Sq^{(0,7)}+Sq^{(14,0,1)}+Sq^{(7,0,2)})  6_{1}
+(Sq^{(8,4)}+Sq^{(3,1,2)}+Sq^{(0,2,2)})  6_{2}
+(Sq^{17}+Sq^{(14,1)}+Sq^{(11,2)}+Sq^{(5,4)}+Sq^{(3,0,2)})  6_{3}
+(Sq^{15}+Sq^{(8,0,1)}+Sq^{(0,0,0,1)})  6_{5}
+(Sq^{14}+Sq^{(2,4)})  6_{6}
+(Sq^{11}+Sq^{(8,1)}+Sq^{(5,2)}+Sq^{(2,3)}+Sq^{(4,0,1)})  6_{7}
+Sq^{(5,1)}  6_{8}
+Sq^{5}  6_{9}
+Sq^{1}  6_{10}
$

\item[38:] $d(7_{9}) =
(Sq^{32}+Sq^{(26,2)}+Sq^{(8,8)}+Sq^{(5,9)}+Sq^{(2,10)}+Sq^{(15,1,2)}+Sq^{(12,2,2)}+Sq^{(9,3,2)}+Sq^{(6,4,2)}+Sq^{(8,1,3)})  6_{0}
+(Sq^{22}+Sq^{(19,1)}+Sq^{(16,2)}+Sq^{(10,4)}+Sq^{(7,5)}+Sq^{(4,6)}+Sq^{(1,7)}+Sq^{(15,0,1)}+Sq^{(12,1,1)}+Sq^{(3,4,1)}+Sq^{(0,5,1)}+Sq^{(8,0,2)}+Sq^{(2,2,2)}+Sq^{(7,0,0,1)}+Sq^{(4,1,0,1)}+Sq^{(0,0,1,1)})  6_{1}
+Sq^{1}  6_{11}
$

\item[39:] $d(7_{10}) =
(Sq^{(27,2)}+Sq^{(15,6)}+Sq^{(12,7)}+Sq^{(9,8)}+Sq^{(3,10)}+Sq^{(10,3,2)})  6_{0}
+(Sq^{(20,1)}+Sq^{(11,4)}+Sq^{(5,6)}+Sq^{(2,7)}+Sq^{(13,1,1)}+Sq^{(9,0,2)}+Sq^{(8,0,0,1)})  6_{1}
+(Sq^{(16,2)}+Sq^{(10,4)}+Sq^{(7,5)}+Sq^{(4,6)})  6_{2}
+(Sq^{19}+Sq^{(16,1)}+Sq^{(10,3)}+Sq^{(7,4)}+Sq^{(4,5)})  6_{3}
+Sq^{(12,2)}  6_{4}
+(Sq^{(10,0,1)}+Sq^{(7,1,1)})  6_{5}
+Sq^{(7,3)}  6_{6}
+(Sq^{(4,3)}+Sq^{(6,0,1)}+Sq^{(0,2,1)})  6_{7}
+(Sq^{(7,1)}+Sq^{(4,2)})  6_{8}
+(Sq^{7}+Sq^{(4,1)}+Sq^{(1,2)}+Sq^{(0,0,1)})  6_{9}
+Sq^{3}  6_{10}
$

\item[40:] $d(7_{11}) =
(Sq^{(13,7)}+Sq^{(10,8)}+Sq^{(4,10)}+Sq^{(11,3,2)}+Sq^{(10,1,3)})  6_{0}
+(Sq^{24}+Sq^{(21,1)}+Sq^{(17,0,1)}+Sq^{(5,4,1)}+Sq^{(9,0,0,1)}+Sq^{(6,1,0,1)})  6_{1}
+(Sq^{23}+Sq^{(5,6)}+Sq^{(2,7)})  6_{2}
+(Sq^{(17,1)}+Sq^{(14,2)}+Sq^{(11,3)}+Sq^{(5,5)}+Sq^{(1,4,1)})  6_{3}
+(Sq^{19}+Sq^{(10,3)}+Sq^{(7,4)})  6_{4}
+(Sq^{18}+Sq^{(15,1)}+Sq^{(12,2)})  6_{5}
+Sq^{(7,0,1)}  6_{7}
+(Sq^{11}+Sq^{(8,1)}+Sq^{(2,3)})  6_{8}
+Sq^{(5,1)}  6_{9}
+Sq^{4}  6_{10}
+Sq^{2}  6_{12}
$

\item[42:] $d(7_{12}) =
(Sq^{(30,2)}+Sq^{(27,3)}+Sq^{(12,8)}+Sq^{(9,9)}+Sq^{(6,10)}+Sq^{(0,12)}+Sq^{(1,7,2)}+Sq^{(15,0,3)}+Sq^{(3,4,3)})  6_{0}
+(Sq^{26}+Sq^{(23,1)}+Sq^{(11,5)}+Sq^{(5,7)}+Sq^{(16,1,1)}+Sq^{(3,3,2)}+Sq^{(11,0,0,1)}+Sq^{(8,1,0,1)}+Sq^{(4,0,1,1)})  6_{1}
+(Sq^{25}+Sq^{(10,5)}+Sq^{(5,2,2)}+Sq^{(1,1,3)})  6_{2}
+(Sq^{(19,1)}+Sq^{(7,5)}+Sq^{(9,2,1)})  6_{3}
+(Sq^{(9,4)}+Sq^{(6,5)})  6_{4}
+(Sq^{(13,0,1)}+Sq^{(5,0,0,1)})  6_{5}
+(Sq^{(13,2)}+Sq^{(7,4)}+Sq^{(1,6)})  6_{6}
+(Sq^{(13,1)}+Sq^{(9,0,1)}+Sq^{(3,2,1)})  6_{7}
+(Sq^{(7,2)}+Sq^{(4,3)}+Sq^{(6,0,1)}+Sq^{(0,2,1)})  6_{8}
+Sq^{(0,2)}  6_{10}
$

\item[44:] $d(7_{13}) =
(Sq^{(14,8)}+Sq^{(2,12)}+Sq^{(12,4,2)}+Sq^{(9,5,2)}+Sq^{(3,7,2)}+Sq^{(0,8,2)}+Sq^{(14,1,3)}+Sq^{(11,2,3)}+Sq^{(5,4,3)}+Sq^{(7,1,4)})  6_{0}
+(Sq^{(25,1)}+Sq^{(13,5)}+Sq^{(10,6)}+Sq^{(7,7)}+Sq^{(4,8)}+Sq^{(1,9)}+Sq^{(6,5,1)}+Sq^{(0,7,1)}+Sq^{(14,0,2)}+Sq^{(5,3,2)}+Sq^{(2,4,2)}+Sq^{(13,0,0,1)}+Sq^{(1,4,0,1)}+Sq^{(6,0,1,1)})  6_{1}
+(Sq^{(18,3)}+Sq^{(3,8)}+Sq^{(0,9)}+Sq^{(7,2,2)}+Sq^{(4,3,2)}+Sq^{(6,0,3)}+Sq^{(3,1,3)}+Sq^{(0,2,3)})  6_{2}
+(Sq^{(21,1)}+Sq^{(12,4)}+Sq^{(3,7)}+Sq^{(11,2,1)}+Sq^{(5,4,1)}+Sq^{(10,0,2)}+Sq^{(1,3,2)}+Sq^{(3,0,3)}+Sq^{(0,1,3)})  6_{3}
+(Sq^{23}+Sq^{(8,5)})  6_{4}
+(Sq^{(19,1)}+Sq^{(10,4)}+Sq^{(7,5)}+Sq^{(4,6)}+Sq^{(15,0,1)}+Sq^{(12,1,1)}+Sq^{(3,4,1)}+Sq^{(0,5,1)}+Sq^{(8,0,2)}+Sq^{(2,2,2)}+Sq^{(4,1,0,1)}+Sq^{(0,0,1,1)})  6_{5}
+(Sq^{(6,5)}+Sq^{(7,0,2)}+Sq^{(1,2,2)})  6_{6}
+(Sq^{(3,5)}+Sq^{(0,6)}+Sq^{(5,2,1)}+Sq^{(4,0,2)})  6_{7}
+(Sq^{15}+Sq^{(12,1)}+Sq^{(0,5)}+Sq^{(1,0,2)}+Sq^{(0,0,0,1)})  6_{8}
+Sq^{(3,3)}  6_{9}
+Sq^{8}  6_{10}
+Sq^{6}  6_{12}
+Sq^{4}  6_{13}
+Sq^{2}  6_{14}
$

\item[44:] $d(7_{14}) =
(Sq^{(29,3)}+Sq^{(11,9)}+Sq^{(5,11)}+Sq^{(2,12)}+Sq^{(15,3,2)}+Sq^{(12,4,2)}+Sq^{(3,7,2)}+Sq^{(0,8,2)}+Sq^{(14,1,3)}+Sq^{(11,2,3)}+Sq^{(8,3,3)}+Sq^{(2,5,3)}+Sq^{(7,1,4)})  6_{0}
+(Sq^{28}+Sq^{(22,2)}+Sq^{(13,5)}+Sq^{(7,7)}+Sq^{(4,8)}+Sq^{(1,9)}+Sq^{(21,0,1)}+Sq^{(6,5,1)}+Sq^{(3,6,1)}+Sq^{(14,0,2)}+Sq^{(5,3,2)}+Sq^{(13,0,0,1)}+Sq^{(10,1,0,1)}+Sq^{(1,4,0,1)}+Sq^{(6,0,1,1)})  6_{1}
+(Sq^{(15,4)}+Sq^{(12,5)}+Sq^{(10,1,2)}+Sq^{(4,3,2)}+Sq^{(6,0,3)}+Sq^{(0,2,3)})  6_{2}
+(Sq^{24}+Sq^{(21,1)}+Sq^{(18,2)}+Sq^{(9,5)}+Sq^{(10,0,2)}+Sq^{(7,1,2)}+Sq^{(0,1,3)})  6_{3}
+(Sq^{22}+Sq^{(16,2)}+Sq^{(7,5)}+Sq^{(15,0,1)}+Sq^{(12,1,1)}+Sq^{(0,5,1)}+Sq^{(8,0,2)}+Sq^{(2,2,2)}+Sq^{(7,0,0,1)}+Sq^{(4,1,0,1)}+Sq^{(0,0,1,1)})  6_{5}
+(Sq^{(15,2)}+Sq^{(9,4)}+Sq^{(6,5)}+Sq^{(3,6)}+Sq^{(7,0,2)})  6_{6}
+(Sq^{(15,1)}+Sq^{(6,4)}+Sq^{(11,0,1)}+Sq^{(5,2,1)}+Sq^{(4,0,2)}+Sq^{(1,1,2)})  6_{7}
+(Sq^{15}+Sq^{(12,1)}+Sq^{(1,0,2)}+Sq^{(0,0,0,1)})  6_{8}
+(Sq^{12}+Sq^{(6,2)}+Sq^{(3,3)}+Sq^{(0,4)})  6_{9}
+Sq^{8}  6_{10}
+Sq^{1}  6_{15}
$

\end{itemize}

\subsection{Homological degree 8}

Complete through degree $t=44$.

\begin{itemize}
\addtolength{\itemsep}{2.4ex}

\item[8:] $d(8_{0}) =
Sq^{1}  7_{0}
$

\item[23:] $d(8_{1}) =
Sq^{16}  7_{0}
+(Sq^{5}+Sq^{(2,1)})  7_{1}
+Sq^{1}  7_{2}
$

\item[25:] $d(8_{2}) =
(Sq^{18}+Sq^{(12,2)})  7_{0}
+(Sq^{7}+Sq^{(1,2)}+Sq^{(0,0,1)})  7_{1}
+Sq^{2}  7_{3}
$

\item[30:] $d(8_{3}) =
(Sq^{(17,2)}+Sq^{(14,3)}+Sq^{(8,5)})  7_{0}
+(Sq^{(9,1)}+Sq^{(6,2)})  7_{1}
+(Sq^{7}+Sq^{(4,1)}+Sq^{(0,0,1)})  7_{3}
+Sq^{(3,1)}  7_{4}
$

\item[31:] $d(8_{4}) =
(Sq^{(15,3)}+Sq^{(12,4)}+Sq^{(9,5)})  7_{0}
+(Sq^{13}+Sq^{(10,1)}+Sq^{(7,2)}+Sq^{(4,3)}+Sq^{(1,4)}+Sq^{(6,0,1)}+Sq^{(0,2,1)})  7_{1}
+Sq^{1}  7_{5}
$

\item[33:] $d(8_{5}) =
(Sq^{26}+Sq^{(11,5)}+Sq^{(12,0,2)})  7_{0}
+(Sq^{(12,1)}+Sq^{(1,0,2)}+Sq^{(0,0,0,1)})  7_{1}
+(Sq^{10}+Sq^{(7,1)})  7_{3}
+(Sq^{(6,1)}+Sq^{(3,2)}+Sq^{(0,3)}+Sq^{(2,0,1)})  7_{4}
+Sq^{3}  7_{5}
$

\item[34:] $d(8_{6}) =
(Sq^{27}+Sq^{(21,2)}+Sq^{(18,3)}+Sq^{(15,4)}+Sq^{(12,5)}+Sq^{(6,7)}+Sq^{(3,8)}+Sq^{(13,0,2)})  7_{0}
+(Sq^{(13,1)}+Sq^{(4,4)}+Sq^{(6,1,1)}+Sq^{(3,2,1)}+Sq^{(1,0,0,1)})  7_{1}
+(Sq^{11}+Sq^{(4,0,1)})  7_{3}
+(Sq^{(7,1)}+Sq^{(1,3)}+Sq^{(0,1,1)})  7_{4}
+Sq^{4}  7_{5}
+Sq^{1}  7_{6}
$

\item[36:] $d(8_{7}) =
(Sq^{29}+Sq^{(23,2)}+Sq^{(20,3)}+Sq^{(5,8)}+Sq^{(12,1,2)})  7_{0}
+(Sq^{(6,4)}+Sq^{(0,6)}+Sq^{(1,1,2)}+Sq^{(0,1,0,1)})  7_{1}
+(Sq^{(10,1)}+Sq^{(6,0,1)})  7_{3}
+(Sq^{(6,2)}+Sq^{(3,3)}+Sq^{(5,0,1)})  7_{4}
+(Sq^{6}+Sq^{(0,2)})  7_{5}
+Sq^{3}  7_{6}
$

\item[37:] $d(8_{8}) =
(Sq^{(21,3)}+Sq^{(15,5)}+Sq^{(6,8)}+Sq^{(3,9)}+Sq^{(0,10)})  7_{0}
+Sq^{(7,0,1)}  7_{3}
+(Sq^{(4,3)}+Sq^{(1,4)}+Sq^{(0,2,1)})  7_{4}
+Sq^{7}  7_{5}
+Sq^{4}  7_{6}
+Sq^{1}  7_{7}
$

\item[38:] $d(8_{9}) =
(Sq^{(19,4)}+Sq^{(16,5)}+Sq^{(17,0,2)}+Sq^{(14,1,2)}+Sq^{(8,3,2)}+Sq^{(10,0,3)})  7_{0}
+(Sq^{(5,5)}+Sq^{(2,6)}+Sq^{(6,0,2)})  7_{1}
+(Sq^{(3,4)}+Sq^{(0,5)}+Sq^{(0,0,0,1)})  7_{3}
+(Sq^{(11,1)}+Sq^{(5,3)}+Sq^{(7,0,1)}+Sq^{(4,1,1)})  7_{4}
+(Sq^{8}+Sq^{(2,2)})  7_{5}
+Sq^{1}  7_{8}
$

\item[39:] $d(8_{10}) =
(Sq^{(23,3)}+Sq^{(14,6)}+Sq^{(8,8)}+Sq^{(2,10)}+Sq^{(11,0,3)})  7_{0}
+(Sq^{21}+Sq^{(12,3)}+Sq^{(9,4)}+Sq^{(6,5)}+Sq^{(0,7)}+Sq^{(14,0,1)}+Sq^{(8,2,1)}+Sq^{(2,4,1)})  7_{1}
+Sq^{17}  7_{2}
+(Sq^{(13,1)}+Sq^{(9,0,1)})  7_{3}
+(Sq^{(6,3)}+Sq^{(3,4)})  7_{4}
+Sq^{9}  7_{5}
+(Sq^{6}+Sq^{(0,2)})  7_{6}
+Sq^{3}  7_{7}
$

\item[39:] $d(8_{11}) =
(Sq^{32}+Sq^{(8,8)}+Sq^{(5,9)}+Sq^{(15,1,2)}+Sq^{(12,2,2)}+Sq^{(9,3,2)}+Sq^{(11,0,3)})  7_{0}
+(Sq^{21}+Sq^{(18,1)}+Sq^{(15,2)}+Sq^{(12,3)}+Sq^{(9,4)}+Sq^{(6,5)}+Sq^{(3,6)}+Sq^{(0,7)}+Sq^{(14,0,1)}+Sq^{(11,1,1)}+Sq^{(8,2,1)}+Sq^{(2,4,1)}+Sq^{(7,0,2)}+Sq^{(4,1,2)}+Sq^{(1,2,2)}+Sq^{(0,0,3)}+Sq^{(6,0,0,1)}+Sq^{(0,2,0,1)})  7_{1}
+Sq^{1}  7_{9}
$

\item[40:] $d(8_{12}) =
(Sq^{33}+Sq^{(27,2)}+Sq^{(24,3)}+Sq^{(12,7)}+Sq^{(9,8)}+Sq^{(6,9)}+Sq^{(3,10)}+Sq^{(0,11)}+Sq^{(12,0,3)})  7_{0}
+(Sq^{(13,3)}+Sq^{(7,5)}+Sq^{(1,7)}+Sq^{(3,4,1)}+Sq^{(0,5,1)}+Sq^{(1,0,3)}+Sq^{(0,0,1,1)})  7_{1}
+(Sq^{17}+Sq^{(10,0,1)})  7_{3}
+(Sq^{(4,4)}+Sq^{(9,0,1)}+Sq^{(3,2,1)})  7_{4}
+Sq^{(4,2)}  7_{5}
+Sq^{7}  7_{6}
+Sq^{4}  7_{7}
+Sq^{1}  7_{10}
$

\item[42:] $d(8_{13}) =
(Sq^{(29,2)}+Sq^{(17,6)}+Sq^{(11,8)}+Sq^{(12,3,2)}+Sq^{(6,5,2)}+Sq^{(3,6,2)}+Sq^{(11,1,3)})  7_{0}
+(Sq^{(18,2)}+Sq^{(12,4)}+Sq^{(9,5)}+Sq^{(14,1,1)}+Sq^{(5,4,1)}+Sq^{(7,1,2)}+Sq^{(3,0,3)}+Sq^{(6,1,0,1)}+Sq^{(3,2,0,1)})  7_{1}
+Sq^{(14,2)}  7_{2}
+(Sq^{(16,1)}+Sq^{(1,6)}+Sq^{(12,0,1)})  7_{3}
+(Sq^{(6,4)}+Sq^{(0,6)})  7_{4}
+(Sq^{(6,2)}+Sq^{(0,4)})  7_{5}
+Sq^{9}  7_{6}
+(Sq^{6}+Sq^{(0,2)})  7_{7}
+Sq^{5}  7_{8}
+Sq^{3}  7_{10}
$

\item[43:] $d(8_{14}) =
(Sq^{(21,5)}+Sq^{(15,7)}+Sq^{(3,11)}+Sq^{(0,12)}+Sq^{(12,1,3)})  7_{0}
+(Sq^{(16,3)}+Sq^{(10,5)}+Sq^{(4,7)}+Sq^{(3,5,1)}+Sq^{(5,2,2)}+Sq^{(2,3,2)}+Sq^{(4,0,3)}+Sq^{(1,1,3)}+Sq^{(4,2,0,1)}+Sq^{(0,1,1,1)})  7_{1}
+(Sq^{21}+Sq^{(12,3)})  7_{2}
+(Sq^{20}+Sq^{(17,1)}+Sq^{(5,5)})  7_{3}
+(Sq^{(13,2)}+Sq^{(10,3)}+Sq^{(7,4)}+Sq^{(12,0,1)}+Sq^{(9,1,1)}+Sq^{(6,2,1)}+Sq^{(1,1,0,1)})  7_{4}
+Sq^{13}  7_{5}
+Sq^{(4,2)}  7_{6}
+Sq^{7}  7_{7}
+Sq^{4}  7_{10}
+Sq^{3}  7_{11}
+Sq^{1}  7_{12}
$

\end{itemize}

\begin{bibdiv}
\begin{biblist}

\bib{Hinf}{book}{
   author={Bruner, R. R.},
   author={May, J. P.},
   author={McClure, J. E.},
   author={Steinberger, M.},
   title={$H\sb \infty $ ring spectra and their applications},
   series={Lecture Notes in Mathematics},
   volume={1176},
   publisher={Springer-Verlag, Berlin},
   date={1986},
   pages={viii+388},
   isbn={3-540-16434-0},
   review={\MR{836132}},
   doi={10.1007/BFb0075405},
}
\bib{V168}{article}{
   author={May, J. Peter},
   title={A general algebraic approach to Steenrod operations},
   conference={
      title={The Steenrod Algebra and its Applications (Proc. Conf. to
      Celebrate N. E. Steenrod's Sixtieth Birthday, Battelle Memorial Inst.,
      Columbus, Ohio, 1970)},
   },
   book={
      series={Lecture Notes in Mathematics, Vol. 168},
      publisher={Springer, Berlin},
   },
   date={1970},
   pages={153--231},
   review={\MR{0281196}},
}

\bib{Nas97}{misc}{
   author={Nassau, Christian},
   date={1997},
   note={private communication},
}

\end{biblist}
\end{bibdiv}

\end{document}